\newcommand{\lrt}{\longrightarrow}
\newcommand{\st}{\stackrel}
\newcommand{\la}{\lambda}
\newcommand{\La}{\Lambda}
\newcommand{\Z}{\mathbb{Z}}
\newcommand{\SA}{\mathscr{A}}
\newcommand{\SB}{\mathscr{B}}
\newcommand{\SK}{\mathscr{K}}
\newcommand{\SX}{\mathscr{X}}
\newcommand{\SY}{\mathscr{Y}}
\newcommand{\CA}{\mathcal{A} }
\newcommand{\CC}{\mathcal{C} }
\newcommand{\CE}{\mathcal{E}}
\newcommand{\CF}{\mathcal{F} }
\newcommand{\CH}{\mathcal{H}}
\newcommand{\CI}{\mathcal{I} }
\newcommand{\CJ}{\mathcal{J} }
\newcommand{\CM}{\mathcal{M} }
\newcommand{\CN}{\mathcal{N} }
\newcommand{\CP}{\mathcal{P} }
\newcommand{\CR}{\mathcal{R} }
\newcommand{\CS}{\mathcal{S} }
\newcommand{\CT}{\mathcal{T} }
\newcommand{\CU}{\mathcal{U}}
\newcommand{\CV}{\mathcal{V}}
\newcommand{\CW}{\mathcal{W}}
\newcommand{\CX}{\mathcal{X} }
\newcommand{\Mod}{{\rm{Mod\mbox{-}}}}
\newcommand{\mmod}{{\rm{{mod\mbox{-}}}}}
\newcommand{\mmodd}{{\rm{mod}}_F\mbox{-}}
\newcommand{\ind}{\rm{{ind}\mbox{-}}}
\newcommand{\op}{{\rm{op}}}
\newcommand{\add}{{\rm{add}\mbox{-}}}
\newcommand{\Coker}{{\rm{Coker}}}
\newcommand{\rad}{{\rm{rad}}}
\newcommand{\Ind}{{\rm{Ind}}}
\newcommand{\Hom}{{\rm{Hom}}}
\newcommand{\Ext}{{\rm{Ext}}}
\newcommand{\End}{{\rm{End}}}
\newcommand{\Tr}{{\rm{Tr}}}
\theoremstyle{plain}
\newtheorem{theorem}{Theorem}[section]
\newtheorem{corollary}[theorem]{Corollary}
\newtheorem{lemma}[theorem]{Lemma}
\newtheorem{proposition}[theorem]{Proposition}
\newtheorem{notation}[theorem]{Notation}
\theoremstyle{definition}
\newtheorem{definition}[theorem]{Definition}
\newtheorem{remark}[theorem]{Remark}
\newtheorem{setup}[theorem]{Setup}
\theoremstyle{plain}
\theoremstyle{definition}
\numberwithin{equation}{section}
\begin{document}

\title[Relative Higher Homological Algebra]{Relative Higher Homology and representation theory}

\author[Rasool Hafezi, Javad Asadollahi and Yi Zhang]{Rasool Hafezi, Javad asadollahi and Yi Zhang}

\address{School of Mathematics and Statistics, Nanjing University of Information Science \& Technology, Nanjing, Jiangsu 210044, P.\,R. China}
\email{hafezi@nuist.edu.cn}

\address{Department of Pure Mathematics, Faculty of Mathematics and Statistics, University of Isfahan, P.O.Box: 81746-73441, Isfahan, Iran}
\email{asadollahi@ipm.ir, asadollahi@sci.ui.ac.ir}

\address{School of Mathematics and Statistics, Nanjing University of Information Science \& Technology, Nanjing, Jiangsu 210044, P.\,R. China}
\email{zhangy2016@nuist.edu.cn}

\subjclass[2010]{18E05, 18G25, 18G15, 18E99,  16E30}

\keywords{Cluster tilting subcategories, higher Auslander-Reiten duality, Grothendieck groups, representation type}

\begin{abstract}
Higher homological algebra, basically done in the framework of an $n$-cluster tilting subcategory $\CM$ of an abelian category $\CA$, has been the topic of several recent researches. In this paper, we study a relative version, in the sense of Auslander-Solberg, of the higher homological algebra. To this end, we consider an additive sub-bifunctor $F$ of $\Ext^n_{\CM}( - , - )$ as the basis of our relative theory. This, in turn, specifies a collection of $n$-exact sequences in $\CM$, which allows us to delve into the relative higher homological algebra. Our results include a proof of the relative $n$-Auslander-Reiten duality formula, as well as an exploration of relative Grothendieck groups, among other results. As an application, we provide necessary and sufficient conditions for $\CM$ to be of finite type. 
\end{abstract}

\maketitle

\tableofcontents

\section{Introduction and preliminaries}
\s The importance of relative homological algebra in the representation theory of artin algebras is highlighted in the three influential papers by M. Auslander and {\O}. Solberg \cite{ASo}, \cite{ASo2} and \cite{ASo3}. One way to approach this theory is by examining short exact sequences that fall within a sub-bifunctor $F$ of $\Ext^1$. Let $\La$ be an artin algebra. In \cite{ASo} the authors established nice connections between certain additive sub-bifunctors of the extension functor $\Ext^1_{\La}( - , - )$ and homologically finite subcategories of $\mmod\La.$ For instance, they showed that there is a bijection between the collection of additive sub-bifunctors $F$ of $\Ext^1_{\La}( - , - )$ with enough projectives and the collection of contravariantly finite subcategories of $\mmod\La$ containing $\CP(\La)$, the subcategory of projective modules in $\mmod \La$. Dually, there is a bijection between the collection of additive subfunctors $F$ of $\Ext^1_{\La}( - , - )$ with enough injectives and the collection of all covariantly finite subcategories of $\mmod \La$ containing $\CI(\La)$, where $\CI(\La)$ denotes the subcategory of injective modules in $\mmod \La$.

Our initial motivation for this work is to elaborate on such bijections, and other similar results, in the new setting of higher homological algebra. Let us be more precise.

\s Higher homological algebra, known also as $n$-homological algebra, is a vast generalization of homological algebra, having sequences of length $n+2$ playing the role of the short exact sequences in ($1$-)abelian categories. It appeared in a series of papers by Iyama \cite{I1, I2, I3} and then axiomatized and studied extensively by Jasso \cite{Ja}.

To recall the notion, let $\CM$ be an additive category and $n \geq 1$ be a fixed integer. Let $f^0 : M^0 \lrt M^1$  be a morphism in $\CM$. An $n$-cokernel of $f^0$  is a sequence
\[M^1 \st{ f^1}{\lrt } M^2 \lrt \cdots \lrt M^n\st{ f^n}{ \lrt } M^{n+1}\]
of objects and morphisms in $\CM$ such that for every $M \in \CM$, the induced sequence
\[0 \lrt \CM(M^{n+1}, M) \lrt \CM(M^n, M) \lrt \cdots \lrt \CM(M^2, M) \lrt \CM(M^1, M) \lrt \CM(M^0, M) \]
of abelian groups is exact. We usually denote an $n$-cokernel of $f^0$ by $(f^1, f^2, \ldots, f^n)$. Dually we define the notion of an $n$-kernel of a morphism.

An $n$-exact sequence in $\CM$ is a sequence
\[0 \lrt M^0 \st{f^0}{\lrt} M^1 \st{ f^1}{\lrt } M^2 \lrt \cdots \lrt M^n\st{ f^n}{ \lrt } M^{n+1} \lrt 0\]
such that $(f^0, f^1, \ldots, f^{n-1})$ is an $n$-kernel of $f^n$ and $(f^1, f^2, \ldots, f^n)$ is an $n$-cokernel of $f^0$ \cite[Definitions 2.2, 2.4]{Ja}.

Let $\CM$ be an additive category. We say that $\CM$ is an $n$-abelian category \cite[Definition 3.1]{Ja} if it is idempotent complete, each morphism in it admits an $n$-cokernel and an $n$-kernel and every monomorphism $f^0: M^0 {\lrt } M^1$, resp. every epimorphism $f^n: M^n { \lrt } M^{n+1}$, can be completed to an $n$-exact sequence
\[0 \lrt M^0 \st{f^0}{\lrt} M^1 \st{ f^1}{\lrt } M^2 \lrt \cdots \lrt M^n\st{ f^n}{ \lrt } M^{n+1} \lrt 0.\]

\s The best-known examples of $n$-abelian categories are $n$-cluster tilting subcategories of abelian categories. Cluster tilting subcategories are introduced by Iyama \cite[Definition 2.2]{I1} in studying a higher version of Auslander's correspondence, see also \cite{I2} and \cite{I3}. 

Let $\SA$ be an abelian category. Recall that a subcategory $\SB$ of $\SA$ is called a covariantly finite subcategory if for every $A \in \SA$ there exists an object $B\in\SB$ and a morphism $f : A \lrt B$ such that for all $B' \in\SB$ the sequence of abelian groups $\SA(B, B') \lrt \SA(A, B') \lrt 0$ is exact. Such a morphism $f$ is called a left $\SB$-approximation of $A$. Contravariantly finite subcategories and right $\SB$-approximations are defined dually. A functorially finite subcategory of $\SA$ is a subcategory that is both a covariantly finite and a contravariantly finite subcategory \cite[page 113]{AR}. 

By definition, an additive subcategory $\CM$ of $\SA$ is called an $n$-cluster tilting subcategory if it is a functorially finite and generating-cogenerating subcategory of $\SA$ and satisfies the equalities $\CM^{\perp_n}=\CM= {}^{\perp_n}\CM$, where
\[ \ \ \ \ \ \ \CM^{\perp_n}:= \{X \in \SA \mid \Ext^i_{\SA}(\CM, X)=0, \ \text{ for all} \ 0 < i<  n \},  \ {\rm and}\]
\[{}^{\perp_n}\CM:= \{X \in \SA \mid \Ext^i_{\SA}(X, \CM)=0, \ \text{ for all} \ 0 < i <  n \}.\]
 It is known that $\CM$ has a structure of an $n$-abelian category  \cite[Theorem 3.16]{Ja}.

\s\label{ASadeghi} After introducing the concept of $n$-abelian categories, there have been several attempts to study the various notions of classical homological algebra in these new settings, see for instance, \cite{IJ}, \cite{Jor}, \cite{HJV}, \cite{JJ}, \cite{AMS} and \cite{EN}. In particular, higher $\Ext$ groups have been studied in \cite{Ja} and \cite{Lu}. 

Let $\CM$ be an $n$-cluster tilting subcategory of $\mmod\La$. Let $N$ and $L$ be in $\CM$. 
An $n$-exact sequence
\begin{equation}
\xi \colon 0 \lrt L\lrt M^1\lrt \cdots \lrt M^n\lrt N \lrt 0 \notag
\end{equation}
is called an $n$-extension of $N$ by $L$. Two $n$-extensions $\xi$ and $\xi'$ of $N$ by $L$ are said to be Yoneda equivalent if there exists a chain of morphisms of $n$-exact sequences
\begin{equation}
\xi=\xi_0,  \xi_1, \ldots ,\xi_{l-1}, \xi_l=\xi' \notag
\end{equation}
such that for every $i\in \{0,\ldots,l-1\}$, we have either a chain map $\xi_i\lrt \xi_{i+1}$ or a chain map $\xi_{i+1}\lrt \xi_{i}$ starting with $1_A$ and ending with $1_L$.  By \cite[Proposition 4.10]{Ja}, this is an equivalence relation. The set of all Yoneda equivalence classes of $n$-extensions of $N$ by $L$ with baer sum is an abelian group \cite[Remark 6.44]{FS}, that will be denoted by $\Ext^n_{\CM}(N, L)$. In fact, $\Ext^n_{\CM}( - ,  - )$ is an additive bifunctor on $\CM^{\op}\times\CM$, which means that, for every object $X$ in $\CM$, the functors $\Ext^n_{\CM}(X ,- ) : \CM \lrt \CA\rm{b}$ and $\Ext^n_{\CM}( - , X) : \CM^{\rm op} \lrt \CA\rm{b}$ are additive \cite[Section 1]{ASo}, where $\CA {\rm b}$ denotes the category of abelian groups. 

From now on, for simplicity, we refer to additive sub-bifunctors of $\Ext^n_{\CM}$ as subfunctors.
It is known \cite[2.4]{ASa} that an additive subfunctor $F$ of $\Ext^n_{\CM}( - , - ): \CM^{\rm op} \times \CM \lrt \CA{\rm b}$ determines a collection of $n$-exact sequences in $\CM$ which is closed under isomorphisms, direct sums and $n$-pullbacks and $n$-pushouts along any other morphism in $\CM$. Conversely, a collection of $n$-exact sequences satisfying these properties gives rise to an additive subfunctor of  $\Ext^n_{\CM}( - , - )$. The $n$-exact sequences in $F$ will be called $n$-$F$-exact sequences.

\s In this paper, we examine the relative homological algebra within an $n$-cluster tilting subcategory $\CM$ of $\mmod\La$, where $\La$ is an artin algebra and $n$ is a fixed positive integer. The paper is structured as follows. In Section \ref{Section 2}, we consider a subfunctor $F$ of $\Ext^n_{\CM}( - , - )$, and explore some homological concepts in the collection of $n$-exact sequences that belong to $F$. In particular, we establish a correspondence between the collection of subfunctors of $\Ext^n_{\CM}( - , - )$ with enough projectives and the collection of contravariantly finite subcategories of $\CM$ that contain projective modules. 

Section \ref{Section 3} is devoted to the study of the monomorphism category within an $n$-cluster tilting subcategory of $\mmod\La$ with respect to a subfunctor $F$. Our main result in this section, i.e. Theorems \ref{equiv-n-first} and \ref{duality-n-exat2}, provide equivalences/dualities that will be used in the Section \ref{Section 4}, where we prove a relative $n$-Auslander-Reiten duality formula, thus provide a generalization of Iyama’s higher Auslander–Reiten
duality formula given as Theorem 2.3.1 in \cite{I1}, see Theorem \ref{Theroem-relative-tau}. 

Grothendieck groups in higher homological algebra settings have been studied in \cite{R} and  \cite{DN}. In Section \ref{Section 5}, we present a relative version of the higher Grothendieck group. As a result, we not only generalize the main result of \cite{DN}, namely Theorem 3.11, but also remove the strong assumption required in their work, see Theorem \ref{eqofgro}. The last section of the paper provides the necessary and sufficient conditions for $\CM$ to be of finite type, in terms of relative higher homological algebra. Our results in this section generalize those of \cite{Au3} and \cite{E}. 

\s We end this introductory section by recalling notions and definitions we need. 
Throughout $\La$ is an artin algebra over a commutative artinian ring $R$. Let
\[\Tr : \underline{{\rm mod}}\mbox{-}\La   \leftrightarrow  \underline{{\rm mod}}\mbox{-}\La^{\op}, \ \ \ \Omega: \underline{{\rm mod}}\mbox{-}\La \lrt \underline{{\rm mod}}\mbox{-}\La \ \ \ {\rm and}
 \ \ \ \Omega^{-{}}: \overline{{\rm mod}}\mbox{-}\La \lrt \overline{{\rm mod}}\mbox{-}\La\]
denote, respectively, the Auslander-Bridger transpose duality, the syzygy functor and the cosyzygy functor \cite{AB}, where for an additive category $\SA$, $\underline{\SA}$ and $\overline{\SA}$ denote the stable and costable categories of $\SA$, respectively \cite{ARS}.

For an integer $n>1$, the $n$-Auslander-Reiten translations $\tau_n$ and $\tau_n^{-}$ are introduced and studied in \cite{I1} as natural generalizations of the classical Auslander-Reiten translations $\tau$ and $\tau^{-}$. The $n$-Auslander-Reiten translations are defined by
\[ \ \ \ \ \ \tau_n := D\Tr \Omega^{n-1} : \underline{{\rm mod}}\mbox{-}\La \lrt \overline{{\rm mod}}\mbox{-}\La, \ \ \ {\rm and}\]
\[\tau^{-}_n := \Tr D \Omega^{-(n-1)} : \overline{{\rm mod}}\mbox{-}\La \lrt \underline{{\rm mod}}\mbox{-}\La.\]
Here $D$ denotes the duality $D( - ) := \Hom_R( - , E),$ where $E$ is the injective envelope of the $R$-module $R/{\rad}R$.
As it is clear from the definition, $\tau_n = \tau\Omega^{n-1}$ and $\tau^{-}_n = \tau^{-}\Omega^{-(n-1)}$. By Proposition 1.2 of \cite{I3}, for any $n$-cluster tilting subcategory $\CM$ of $\mmod\La$, the functor $\tau_n$ provides a bijection between the set of iso-classes of indecomposable non-projective objects in $\CM$ and the set of iso-classes of indecomposable non-injective objects in $\CM$.

\s \label{almost split} Let $\CM$ be an $n$-cluster tilting subcategory of $\mmod\La$. By \cite[Definition 3.1]{I1}, an $n$-almost split sequence is an exact sequence
\[0 \lrt M^0 \st{f^0}\lrt M^1 \st{f^1}\lrt M^2 \lrt \cdots \lrt M^{n} \st{f^{n}}{\lrt} M^{n+1} \lrt 0 \]
in $\CM$ such that for all $i $ in $\{0,1, \ldots, n\}$, $f^i \in \CJ_{\La}$, where $\CJ_{\La}$ is the Jacobson radical of $\mmod\La$, and the sequence
\[0 \lrt \Hom_{\La}(M, M^0) \lrt \Hom_{\La}(M, M^1) \lrt \cdots \lrt \Hom_{\La}(M, M^{n}) \lrt \CJ_{\La}(M, M^{n+1}) \lrt 0 \]
is exact for every $M$ in $\CM$. 

By Theorem 3.3.1 of \cite{I1}, for every non-projective, resp. non-injective, indecomposable module $X$, resp. $Y$, in $\CM$, there exist $n$-almost split sequences
\[ \ \ \ \ \ \ 0 \lrt \tau_nX \lrt M^1 \lrt \cdots \lrt M^n \lrt X \lrt 0, \ {\rm and}\]
\[\ 0 \lrt Y \lrt M'^1 \lrt  \cdots \lrt M'^n \lrt \tau_n^{-1}Y  \lrt 0, \]
in $\CM$, respectively. For a recent study of  $n$-almost split sequences, in the framework of $n$-exangulated categories, see \cite{HHZZ}.

\s Let $\CC$ be an essentially small additive category. By definition, a (right) $\CC$-module is a contravariant additive functor $F: \CC \lrt \mathcal{A}\rm{b}$. The collection of $\CC$-modules and natural transformations between them form an abelian category denoted by $\Mod \CC$. A $\CC$-module $F$ is called finitely presented if there exists an exact sequence
\[\CC( - , C) \lrt \CC( - , C') \lrt F \lrt 0,\]
with $C$ and $C'$ in $\CC$. All finitely presented $\CC$-modules form a full subcategory of $\Mod\CC$, denoted by $\mmod \CC.$ We often write $\CC \mbox{-} \text{mod}$ instead of $\mmod \CC^{\rm op}$. It is known that if $\CC$ admits weak kernels, then $\mmod \CC$ is an abelian category, see\cite[\S III, Section 2]{Au}. In particular, if $\CC$ is a contravariantly finite subcategory of an abelian category $\CA$, then it admits weak kernels and hence $\mmod\CC$ is an abelian category.
Since an $n$-cluster tilting subcategory $\CM$ of $\mmod\La$ is functorially finite, $\mmod\CM$ and $\CM\mbox{-}\text{mod}$ are abelian categories.

\begin{setup}
Throughout the paper, $\CM$ denotes an $n$-cluster tilting subcategory of $\mmod\La$, where $n$ is a fixed positive integer. We also assume that $F$ is a fixed additive subfunctor of $\Ext^n_{\CM}( - , - )$. 
\end{setup}

\section{Subfunctors of $\Ext^n_{\CM}( - , - )$}\label{Section 2}
In this section, we study some basic notions in the context of higher homological algebra. Let $F$ be a subfunctor of $\Ext^n_{\CM}( - , - )$. Elements of $F$, i.e. $n$-exact sequences in $F$, will be called $n$-$F$-exact sequences.

\begin{definition}
A module $P$ in $\CM$ is called $n$-$F$-projective if for every $n$-$F$-exact sequence
\begin{equation}
\xi \colon 0 \lrt M' \lrt M^1 \lrt \cdots \lrt M^n \lrt M \lrt 0 \notag
\end{equation}
the induced sequence
\begin{equation}
0 \lrt \CM(P, M') \lrt \CM(P, M^1)\lrt \cdots \lrt \CM(P, M^n) \lrt \CM(P, M) \lrt 0 \notag
\end{equation}
is exact. We say that $F$ has enough $n$-$F$-projectives, for simplicity, enough projectives, if for every $M \in \CM$, there exists an $n$-$F$-exact sequence
\begin{equation}
0 \lrt M^1\lrt M^2\lrt \cdots \lrt  M^n\lrt P\lrt M \lrt 0, \notag
\end{equation}
with $P$ an $n$-$F$-projective module. Similarly, we define the notions of $n$-$F$-injective modules in $\CM$ and a subfunctor having enough injectives. 
\end{definition}

The subcategories of $\CM$ consisting of $n$-$F$-projective, resp. $n$-$F$-injective, modules are denoted by $\CP(F)$, resp. $\CI(F)$. If $F=\Ext^n_{\La}(-, -)$, then $\CP(F)=\CP(\La)$ and $\CI(F) = \CI(\La)$.

\begin{notation}\label{notation}
Since, throughout the paper, $n$ is a fixed positive integer, for simplicity $n$-$F$-exact sequences, $n$-$F$-projective modules and $n$-$F$-injective modules will be referred to as $F$-exact sequences, $F$-projective modules and $F$-injective modules, respectively.
\end{notation}

\begin{lemma}\label{induced-exact-sequence}
Let $\xi: 0 \lrt M' \lrt M^1\lrt \cdots \lrt M^n\lrt M \lrt 0$ be an $F$-exact sequence. For all $ X\in \CM$, there exist exact sequences
\[0 \lrt \CM(X, M')\lrt \CM(X,M^1)\lrt\cdots\lrt \CM(X,M^n)\lrt\CM(X, M) \st{\delta}\lrt F(X, M')  \]
and
\[0 \lrt \CM(M, X)\lrt \CM(M^n, X)\lrt\cdots\lrt \CM(M^1, X)\lrt \CM(M',X) \st{\delta}\lrt F(M, X) \]
of abelian groups.
\end{lemma}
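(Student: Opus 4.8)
The plan is to split each sequence into a ``formal'' part, coming directly from the $n$-exact structure of $\xi$, and a ``relative'' part, which requires constructing the connecting map $\delta$ into $F$. Write $\xi$ as
\[0 \lrt M^0 \st{f^0}{\lrt} M^1 \lrt \cdots \lrt M^n \st{f^n}{\lrt} M^{n+1} \lrt 0\]
with $M^0 = M'$ and $M^{n+1} = M$. Since $\xi$ is $n$-exact, $(f^0, \ldots, f^{n-1})$ is an $n$-kernel of $f^n$ and $(f^1, \ldots, f^n)$ is an $n$-cokernel of $f^0$. By the very definitions of $n$-kernel and $n$-cokernel, applying $\CM(X, -)$ to the former and $\CM(-, X)$ to the latter yields exactness of
\[0 \lrt \CM(X, M') \lrt \CM(X, M^1) \lrt \cdots \lrt \CM(X, M^n) \lrt \CM(X, M)\]
and of
\[0 \lrt \CM(M, X) \lrt \CM(M^n, X) \lrt \cdots \lrt \CM(M^1, X) \lrt \CM(M', X)\]
at every node except possibly the rightmost one. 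Hence only the construction of $\delta$ and exactness at $\CM(X, M)$, respectively $\CM(M', X)$, remain.

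To build $\delta$ for the first sequence, I would send a morphism $g \colon X \lrt M$ to the Yoneda class of the $n$-pullback of $\xi$ along $g$. Such an $n$-pullback exists because $\CM$ is $n$-abelian, and since the subfunctor $F$ is closed under $n$-pullbacks (\cite[2.4]{ASa}, cf. the discussion of subfunctors above), the resulting $F$-exact sequence $0 \lrt M' \lrt \cdots \lrt X \lrt 0$ has its class in $F(X, M')$. Dually, $\delta$ on the second sequence sends $h \colon M' \lrt X$ to the class of the $n$-pushout of $\xi$ along $h$, which lies in $F(M, X)$. I would then verify that these assignments are independent of the chosen $n$-pullback/$n$-pushout diagram up to Yoneda equivalence and are additive; additivity is formal once one knows that the $n$-pullback is compatible with the Baer sum, which is part of the bifunctoriality of $\Ext^n_{\CM}$ recorded in \cite{Ja}.

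It then remains to prove exactness at the junction, say $\Ker \delta = \im\big(\CM(X, M^n) \lrt \CM(X, M)\big)$ for the first sequence. The inclusion $\supseteq$ holds because pulling $\xi$ back along a morphism of the form $f^n s$ gives a split sequence, by the universal property of the $n$-pullback. For $\subseteq$, if $g \in \Ker \delta$ then the pullback $g^{*}\xi$ represents the zero class, hence is split; a section $\sigma$ of the last morphism of $g^{*}\xi$, composed with the comparison morphism $g^{*}\xi \lrt \xi$ that covers $g$, produces a morphism $s \colon X \lrt M^n$ with $f^n s = g$, so $g$ lies in the image of $\CM(X, M^n) \lrt \CM(X, M)$. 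The dual argument, using $n$-pushouts, gives $\Ker \delta = \im\big(\CM(M^1, X) \lrt \CM(M', X)\big)$ for the second sequence.

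The main obstacle is the characterization used in the last step: that $g^{*}\xi$ represents $0$ in $F(X, M')$ if and only if $g$ factors through $f^n$. For $n = 1$ this is the classical fact that a pulled-back extension splits exactly when the map lifts, but for $n \geq 2$ the notions of the zero class and of a split $n$-exact sequence are more delicate, so I would lean on Jasso's analysis of $n$-pullback/$n$-pushout diagrams and of trivial $n$-exact sequences in \cite{Ja} to make the equivalence precise. Checking that $\delta$ is well defined and additive is the other point needing care, but it is routine once the $n$-pullback operation on Yoneda classes is available.
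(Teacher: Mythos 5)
Your outline is correct in its broad strokes, but it takes a genuinely different route from the paper, and one step of it is left genuinely unsupported. The paper's proof is essentially two lines: it cites \cite[Lemma 3.5]{I3}, which already supplies the exact sequence
\[0 \lrt \CM(X, M')\lrt \CM(X,M^1)\lrt\cdots\lrt \CM(X,M^n)\lrt \CM(X, M) \st{\delta}\lrt \Ext^n_{\CM}(X, M'),\]
with $\delta$ given by $n$-pullback, and then observes that since $\xi$ is $F$-exact and $F$ is closed under $n$-pullbacks, the image of $\delta$ lies in the subgroup $F(X,M')$, so corestricting $\delta$ costs nothing. You instead rebuild Iyama's lemma from the $n$-abelian axioms: exactness away from the rightmost node follows from the definitions of $n$-kernel and $n$-cokernel (correct), and the inclusion $\im\,\CM(X,f^n)\subseteq\Ker\delta$ follows from the right-$\CM$-approximation built into the construction of $n$-pullbacks (also correct; this is the same mechanism the paper itself uses in the proof of Proposition \ref{additivesubfunctor}). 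Your route buys self-containedness; the paper's buys brevity and, more importantly, it sidesteps the delicate point below.

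The gap is your step ``$g^{*}\xi$ represents the zero class, hence is split.'' For $n\geq 2$ this is \emph{not} a formal property of Yoneda classes: in a general abelian category an $n$-fold extension can represent $0$ in $\Ext^n$ without being contractible. For instance,
\[0 \lrt \Z/p \lrt \Z/p^2 \st{p}\lrt \Z/p^2 \lrt \Z/p \lrt 0\]
represents $0$ in $\Ext^2_{\Z}(\Z/p,\Z/p)=0$, yet it is not contractible, as $\Z/p$ is not a direct summand of $\Z/p^2$. Moreover, the implication is not available as a ready-made citation: \cite[Proposition 2.6]{Ja} only says that an $n$-exact sequence is contractible if and only if its first map is a split monomorphism if and only if its last map is a split epimorphism. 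What actually closes your argument is the following. The zero element of the Baer-sum group is the class of the trivial sequence $T\colon 0 \lrt M' \st{1}\lrt M' \lrt 0 \lrt \cdots \lrt 0 \lrt X \st{1}\lrt X \lrt 0$, and any contractible sequence receives a morphism of $n$-exact sequences from $T$ fixing the end terms (built from a section of its last map), so contractible sequences have class zero. Conversely, if $w$ is a morphism of $n$-exact sequences between $\eta$ and $\eta'$ fixing the end terms (in either direction) and $\eta'$ is contractible, then a section of the last map of $\eta'$, respectively a retraction of its first map, transports along $w$ to a section of the last map, respectively a retraction of the first map, of $\eta$; so $\eta$ is contractible by \cite[Proposition 2.6]{Ja}. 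Inducting along the zigzag defining Yoneda equivalence then shows that class zero implies contractible. Without this argument (or an explicit reference covering it, e.g.\ from the treatment of higher extensions in \cite{Lu}), your inclusion $\Ker\delta\subseteq\im\,\CM(X,f^n)$ is unsupported; with it, your proof is complete. Alternatively, one can deduce the implication from Iyama's lemma itself --- but at that point one is back to the paper's citation.
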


\begin{proof}
We only prove the existence of the first exact sequence; the proof for the second is omitted as it is dual. By \cite[Lemma 3.5]{I3}, we have the following exact sequence
\begin{align}
0&\lrt \CM(X, M')\lrt \CM(X,M^1)\lrt\cdots\lrt \CM(X,M^n)\lrt \CM(X, M) \st{\delta}\lrt \Ext^n_{\CM}(X, M') \notag
\end{align}
Let $g:X \lrt M$ be a morphism in $\CM(X, M)$. The connecting map $\delta$ maps $g$ to $\xi g$, by taking $n$-pullback of $\xi$ along $g$. Since $\xi$ is $F$-exact, $\xi g$ is an element of $F(X, M')$. This completes the proof.
\end{proof}

An $n$-exact sequence
\[\xi \colon 0 \lrt M^0 \st{f^0}\lrt M^1 \st{f^1}\lrt M^2 \lrt \cdots \lrt M^{n} \st{f^n}{\lrt} M^{n+1} \lrt 0 \]
is called split, or contractible, if the identity morphism on $\xi$ is null-homotopic. By \cite[Proposition 2.6]{Ja}, $\xi$ is split if and only if $f^0$ is a split monomorphism if and only if $f^n$ is a split epimorphism.

\begin{corollary}\label{vanishing-proj-inj}
The following statements hold.
\begin{itemize}
    \item [$(i)$] $P$ is $F$-projective if and only if $F(P, M)=0$, for all modules $M$ in $\CM.$
    \item [$(ii)$] $I$ is $F$-injective if and only if $F(M, I)=0$, for all modules $M$ in $\CM.$
\end{itemize}
\end{corollary}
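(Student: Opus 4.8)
The plan is to prove the two equivalences by combining the exact sequence from Lemma~\ref{induced-exact-sequence} with the characterization of split $n$-exact sequences recalled just above. I treat statement $(i)$; statement $(ii)$ follows by the dual argument, interchanging the roles of the two exact sequences in Lemma~\ref{induced-exact-sequence} and using that an $n$-exact sequence splits iff $f^n$ is a split epimorphism.

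For the forward direction of $(i)$, suppose $P$ is $F$-projective, and let $\eta \in F(P, M)$ be represented by an $F$-exact sequence
\[
\eta \colon 0 \lrt M \lrt N^1 \lrt \cdots \lrt N^n \lrt P \lrt 0.
\]
Since $P$ is $F$-projective, applying $\CM(P, -)$ to $\eta$ yields an exact sequence, so in particular the map $\CM(P, N^n) \lrt \CM(P, P)$ is surjective. Hence the identity $1_P$ lifts to a morphism $P \lrt N^n$, which exhibits $f^n$ as a split epimorphism. By \cite[Proposition 2.6]{Ja} the sequence $\eta$ is split, and a split $n$-exact sequence represents the zero class in $\Ext^n_{\CM}(P, M)$, hence the zero class in the subgroup $F(P, M)$. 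Therefore $F(P, M) = 0$ for all $M \in \CM$.

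For the converse, assume $F(P, M) = 0$ for all $M \in \CM$, and let
\[
\xi \colon 0 \lrt M' \lrt M^1 \lrt \cdots \lrt M^n \lrt M \lrt 0
\]
be an arbitrary $F$-exact sequence. Applying the first exact sequence of Lemma~\ref{induced-exact-sequence} with $X = P$, I obtain the exact sequence
\[
0 \lrt \CM(P, M') \lrt \CM(P, M^1) \lrt \cdots \lrt \CM(P, M^n) \lrt \CM(P, M) \st{\delta}\lrt F(P, M').
\]
The hypothesis gives $F(P, M') = 0$, so the connecting map $\delta$ is zero, and the sequence terminates with $\CM(P, M^n) \lrt \CM(P, M) \lrt 0$ exact. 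Combined with the exactness already provided on the left by the lemma, this shows the induced sequence is exact in every degree, which is precisely the defining condition for $P$ to be $F$-projective.

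The main point to get right is that the connecting map $\delta$ of Lemma~\ref{induced-exact-sequence} really lands in $F(P, M')$ (not merely in $\Ext^n_{\CM}$), so that the vanishing hypothesis applies to kill exactly the cokernel obstruction; this is already built into the statement of that lemma, since $\xi$ is $F$-exact. The only other subtlety is the forward direction's use of \cite[Proposition 2.6]{Ja} to pass from a split epimorphism to a contractible sequence and hence to the zero Yoneda class, but this is immediate from the recalled splitting criterion. I expect no serious obstacle; the proof is a clean assembly of the preceding lemma and the splitting criterion.
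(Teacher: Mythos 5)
Your proof is correct and takes essentially the same route as the paper's: the forward direction by splitting any $F$-exact sequence ending at $P$ (which the paper dismisses as obvious), and the converse by reading off surjectivity from the exact sequence of Lemma \ref{induced-exact-sequence} once the target $F(P,M')$ of the connecting map vanishes. You have merely spelled out the details the paper elides, including the appeal to \cite[Proposition 2.6]{Ja} for the splitting criterion.
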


\begin{proof}
We only prove $(i)$. Proof of $(ii)$ is similar. If $P$ is in $\CP(F)$, then obviously for every $M$ in $\CM$, any $F$-exact sequence in $F(P, M)$ splits. So $F(P, M)=0$. The converse follows from the previous lemma.
\end{proof}

When $F$ has enough projective or enough injective objects, we have the following characterization of $F$-exact sequences in terms of $F$-projective and $F$-injective modules.

\begin{proposition}\label{F-n-proj-proper}
Let $\xi \colon 0 \lrt M' \lrt M^1 \lrt \cdots \lrt M^n\st{f} \lrt M \lrt 0$ be an $n$-exact sequence in $\CM$.
\begin{itemize}
\item [$(1)$] If $F$ has enough projectives, then $\xi$ is $F$-exact if and only if
\begin{align}
0 & \lrt \CM(P, M') \lrt \CM(P,M^1) \lrt \cdots \lrt \CM(P,M^n) \lrt \CM(P, M) \lrt 0\notag
\end{align}
is exact for every $F$-projective module $P$.
\item [$(2)$] If $F$ has enough injectives, then $\xi$ is $F$-exact if and only if
\begin{align}
0 & \lrt \CM(M, I) \lrt \CM(M^n, I) \lrt \cdots \lrt \CM(M^1, I) \lrt \CM(M', I) \lrt 0 \notag
\end{align}
is exact for every $F$-injective module $I$.
\end{itemize}
\end{proposition}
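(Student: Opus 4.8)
The plan is to dispatch the two non-trivial (converse) implications by the same device, the comparison/$n$-pushout technique, treating $(2)$ as the formal dual of $(1)$. In each case the forward implication is nothing but the definition of $F$-projective (resp. $F$-injective) modules, so I would state that in one line and concentrate on the converse of $(1)$: assuming that $0 \lrt \CM(P, M') \lrt \cdots \lrt \CM(P, M^n) \lrt \CM(P, M) \lrt 0$ is exact for every $F$-projective $P$, show that the Yoneda class $[\xi]$ lies in $F(M, M')$.

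First I would invoke that $F$ has enough projectives to fix an $F$-exact sequence $\eta \colon 0 \lrt L' \lrt L^1 \lrt \cdots \lrt L^{n-1} \lrt P \st{g}{\lrt} M \lrt 0$ with $P$ an $F$-projective module. The next step is to build a morphism of $n$-exact sequences $\phi \colon \eta \lrt \xi$ that is the identity on the common right-hand term $M$. Its top component $\phi^n \colon P \lrt M^n$ exists because the hypothesis, applied to the $F$-projective module $P$, makes $f_\ast \colon \CM(P, M^n) \lrt \CM(P, M)$ surjective, so $g = f \phi^n$. The remaining components $\phi^{n-1}, \ldots, \phi^0$ I would construct by descending induction: since $\xi$ is an $n$-exact sequence, its initial segment is an $n$-kernel of $f$, so the sequence $0 \lrt \CM(L^i, M') \lrt \cdots \lrt \CM(L^i, M^n) \lrt \CM(L^i, M)$ is exact, and, $\eta$ being a complex, each square can be closed one step to the left. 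This yields in particular the left-hand component $\phi^0 \colon L' \lrt M'$.

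The conceptual heart is then to read this comparison map correctly: a chain map $\phi \colon \eta \lrt \xi$ which is the identity on the terminal object exhibits $\xi$ as the $n$-pushout of $\eta$ along $\phi^0$, so that $[\xi] = (\phi^0)_\ast[\eta]$ in $\Ext^n_\CM(M, M')$. Here I would lean on the $n$-pushout machinery and the Yoneda-equivalence calculus of \cite[Section 4]{Ja}. Granting this, the conclusion is immediate: $\eta$ is $F$-exact, so $[\eta] \in F(M, L')$, and since $F$ is an additive subfunctor it is closed under $n$-pushouts (as recalled in the introduction), whence $(\phi^0)_\ast[\eta] \in F(M, M')$ and $\xi$ is $F$-exact.

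The step I expect to be the main obstacle is precisely this identification of $\phi$ with an $n$-pushout: in the classical ($n=1$) Auslander--Solberg setting one lifts a single map and reads off the pushout at once, whereas here one must first carry out the multi-step descending lift through the $n$-kernel of $\xi$ and then argue, via the universal property of the $n$-pushout and the fact that a chain map which is the identity on both ends is a Yoneda equivalence, that the whole diagram computes $(\phi^0)_\ast[\eta]$. Once that is secured, closure of $F$ under $n$-pushouts finishes $(1)$, and dually, using enough injectives, the $n$-cokernel property of $\xi$, and closure under $n$-pullbacks, it finishes $(2)$.
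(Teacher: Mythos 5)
Your proposal is correct and takes essentially the same route as the paper's proof: both use enough projectives to produce an $F$-exact sequence $\eta$ ending at $M$ with $F$-projective penultimate term, lift the epimorphism using the hypothesis, complete to a morphism of $n$-exact sequences fixing $M$, and then identify $\xi$ as an $n$-pushout of $\eta$ via \cite[Proposition 4.8]{Ja}, concluding by closure of $F$ under $n$-pushouts. The only cosmetic difference is that the paper constructs the comparison map ``using the property of weak kernels'' while you phrase the same lifting as a descending induction through the $n$-kernel property of $\xi$.
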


\begin{proof}
We only prove $(1)$. The proof of the second statement follows similarly. The `only if' part follows by definition. For the proof of the `if' part assume that for every $F$-projective module $P$, the sequence $\CM(P, \xi)$ is exact. Since $F$ has enough projectives, there exists an $F$-exact sequence
\begin{equation}
\eta \colon 0 \lrt L^1 \lrt L^2 \lrt \cdots \lrt  L^n \lrt Q \st{g} \lrt M \lrt 0 \notag
\end{equation}
such that $Q$ is $F$-projective. Since the sequence $\CM(Q, \xi)$ is exact, there is a morphism $s: Q \lrt M^n$ such that $fs=g$. Using the property of weak kernels we can construct the commutative diagram
\begin{equation}
\begin{tikzcd}
\eta \colon & 0 \rar & L^1 \rar \dar{d}& L^2 \rar \dar & \cdots\rar & L^n \rar \dar & Q \rar{g} \dar{s} & M \dar[equals] \rar & 0 \\
\xi \colon & 0 \rar & M' \rar & M^1 \rar & \cdots \rar & M^{n-1} \rar & M^n \rar{f} & M \rar & 0. \end{tikzcd} \notag
\end{equation}
By \cite[Proposition 4.8]{Ja}, this diagram is an $n$-pushout diagram of the $F$-exact sequence $\eta$ along the morphism $d$. Hence, $\xi$ is $F$-exact, as it is claimed.
\end{proof}

\begin{remark} \label{n-exactness} Let $\xi \colon 0 \lrt M' \lrt M^1\lrt \cdots \lrt M^n\st{f}\lrt M \lrt 0$ be an $n$-exact sequence in $\CM$. By \cite[Proposition 3.2.1]{I1}  for every $X \in \CM$, the sequence $\CM(X, \xi)$ is exact if and only if so is $\CM(\xi, \tau_n X)$. 
\end{remark}

\begin{corollary}
Let $F$ be a subfunctor of $\Ext^n_{\CM}( - , - )$ with enough injectives and enough projectives. The following equalities hold. 
\begin{itemize}
   \item [$(1)$] $\CI(F)=\tau_n(\CP(F)) \cup \CI(\La)$.
   \item [$(2)$] $\CP(F)=\tau^{-1}_n(\CI(F)) \cup \CP(\La)$.
\end{itemize}
\end{corollary}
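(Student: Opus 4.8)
The plan is to reduce the whole statement to a single equivalence at the level of indecomposables and then transport it through the bijection induced by $\tau_n$. First I would record two elementary facts. Since $F$ is a subfunctor of $\Ext^n_{\CM}(-,-)$ we have $F(X,Y)\subseteq\Ext^n_{\CM}(X,Y)$, so the vanishing of $\Ext^n_{\CM}(P,-)$ for $P\in\CP(\La)$ (resp. of $\Ext^n_{\CM}(-,I)$ for $I\in\CI(\La)$) forces $\CP(\La)\subseteq\CP(F)$ and $\CI(\La)\subseteq\CI(F)$; and since $F$ is additive in each variable, both $\CP(F)$ and $\CI(F)$ are closed under direct sums and summands, hence are determined by their indecomposable objects. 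By Proposition 1.2 of \cite{I3}, $\tau_n$ is a bijection between the indecomposable non-projectives and the indecomposable non-injectives of $\CM$, and it annihilates projectives (while $\tau_n^{-1}$ annihilates injectives). Thus both $(1)$ and $(2)$ will follow once I establish the core equivalence: for every indecomposable non-projective $P$ in $\CM$,
\[ P\in\CP(F)\iff \tau_n P\in\CI(F). \]

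For the forward implication I would invoke Corollary \ref{vanishing-proj-inj}$(ii)$: it suffices to show $F(M,\tau_n P)=0$ for all $M$, i.e. that every $F$-exact sequence $\xi\colon 0\to\tau_n P\st{f^0}{\lrt}X^1\to\cdots\to X^n\to M\to 0$ splits. Because $P$ is $F$-projective, $\CM(P,\xi)$ is exact; by Remark \ref{n-exactness} this is equivalent to the exactness of $\CM(\xi,\tau_n P)$. In particular the rightmost map $\CM(X^1,\tau_n P)\to\CM(\tau_n P,\tau_n P)$ is surjective, so $1_{\tau_n P}$ lifts to a retraction of $f^0$; by \cite[Proposition 2.6]{Ja} the sequence $\xi$ splits, as required.

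The backward implication is the mirror argument. Assuming $\tau_n P\in\CI(F)$, I would show $F(P,M)=0$ for all $M$ via Corollary \ref{vanishing-proj-inj}$(i)$: given an $F$-exact sequence $\eta\colon 0\to M\to Y^1\to\cdots\to Y^n\st{f}{\lrt}P\to 0$, the $F$-injectivity of $\tau_n P$ makes $\CM(\eta,\tau_n P)$ exact, and Remark \ref{n-exactness} then yields the exactness of $\CM(P,\eta)$. Surjectivity of $\CM(P,Y^n)\to\CM(P,P)$ lifts $1_P$ to a section of $f$, so $\eta$ splits by \cite[Proposition 2.6]{Ja}; hence $P\in\CP(F)$. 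The hypotheses that $F$ has enough projectives and injectives are available if one prefers to argue through Proposition \ref{F-n-proj-proper}, but the equivalence above already follows from the definitions together with Remark \ref{n-exactness}.

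Finally I would assemble the equalities. For $(1)$, the indecomposable objects of $\CI(F)$ split into the injective ones, which lie in $\CI(\La)$, and the non-injective ones; each of the latter is $\tau_n(\tau_n^{-1}I)$ with $\tau_n^{-1}I\in\CP(F)$ by the backward implication, while conversely $\tau_n P$ is a non-injective $F$-injective for every indecomposable non-projective $P\in\CP(F)$ by the forward implication (and $\tau_n$ kills the projective part of $\CP(F)$). This yields $\CI(F)=\tau_n(\CP(F))\cup\CI(\La)$, and reading the same core equivalence with $P=\tau_n^{-1}I$ gives the dual statement $(2)$. I expect the only delicate point to be the core equivalence itself: keeping the variance in Remark \ref{n-exactness} straight (it is applied with $X=P$ to two different sequences $\xi$ and $\eta$) and exploiting that the relevant Hom-complexes are exact all the way to the right, so that the identity morphisms genuinely lift to a splitting. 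Everything after that is bookkeeping through the $\tau_n$-bijection and the additivity of $\CP(F)$ and $\CI(F)$.
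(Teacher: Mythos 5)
Your proposal is correct and uses essentially the same argument as the paper: the entire content in both cases is Remark \ref{n-exactness} (Iyama's duality), which transfers exactness of $\CM(X,\xi)$ into exactness of $\CM(\xi,\tau_n X)$, the paper proving the two inclusions of $(1)$ directly from the definitions of $F$-projective and $F$-injective, while you package the same transfer as an equivalence $P\in\CP(F)\Leftrightarrow\tau_n P\in\CI(F)$ on indecomposable non-projectives and then do the bookkeeping through the $\tau_n$-bijection. Your extra detour through Corollary \ref{vanishing-proj-inj} and explicit splittings is harmless but unnecessary (exactness of $\CM(\xi,\tau_n P)$ for all $F$-exact $\xi$ is already the definition of $F$-injectivity), and your observation that the enough-projectives/injectives hypothesis is not actually needed is equally true of the paper's own proof.
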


\begin{proof}
We will provide proof for Statement $(1)$, and omit the proof of Statement $(2)$ as it is similar. We first show that $\CI(F) \subseteq \tau_n(\CP(F)) \cup \CI(\La)$. Let $X \in \CI(F)$. If $X \in \CI(\La)$, there is nothing to prove. So assume that $X$ is not injective. Since $X$ is $F$-injective, $\CM(\xi, X)$ is exact, for every $F$-exact sequence $\xi$. This, in view of Remark \ref{n-exactness}, implies that $\CM(\tau_n^{-1}X, \xi)$ is exact, for every $F$-exact sequence $\xi$. So $\tau_n^{-1}X \in \CP(F)$, or equally, $X \in \tau_n(\CP(F))$. So we have the desired inclusion. The reverse inclusion follows by using a similar argument. So the proof is complete.
\end{proof}

\begin{remark}
We say that a subcategory $\SX$ of an abelian category $\SA$ is of finite type if the number of iso-classes of indecomposable objects in $\SX$ is finite. This is equivalent to saying that it has an additive generator, i.e., there exists $X \in \SX$ such that $\SX=\add X$. 
The above corollary, in particular, implies that $\CP(F)$ is of finite type if and only if $\CI(F)$ is of finite type.
\end{remark}

\begin{remark}
It is known that the number of non-isomorphic indecomposable projective modules in $\mmod\La$ is equivalent to the number of non-isomorphic indecomposable injective modules. Suppose that $\CP(F)$ and $\CI(F)$ are of finite type. In that case, the above corollary extends this fact to the higher version by establishing equality between the number of non-isomorphic indecomposable modules in $\CP(F)$ and the number of non-isomorphic indecomposable modules in $\CI(F)$.
\end{remark}

\begin{notation}\label{notation2.10}
Let $\CN$ be a subcategory of $\CM.$ For every $M$ and $M'$ in $\CM,$ let $F^n_{\CN}(M, M')$ and $F^{n\CN}(M, M')$ denote the subsets
\[ \{ \xi \in \Ext^n_{\CM}(M, M') \mid \CM(X, \xi) \ \text{is exact for all } X \in \CN \}\]
and
\[\{ \xi \in \Ext^n_{\CM}(M, M') \mid \CM(\xi, X) \ \text{is exact for all } X \in \CN \}\]
of $\Ext^n_{\CM}(M, M')$, respectively.
\end{notation}

It is obvious that an $n$-exact sequence 
$$\xi \colon 0 \lrt M' \st{f^0}{\lrt} M^1 \lrt \cdots \lrt M^n \st{f^n}\lrt M \lrt 0$$ 
is in $F^n_{\CN}(M, M')$ if and only if for every $X \in \CN$, the induced morphism $\CM(X, f^n)$ is an epimorphism. Similarly, an $n$-exact sequence $\xi$ is in $F^{n\CN}(M, M')$ if and only if for every $X \in \CN$, the induced morphism  $\CM(f^0, X)$ is an epimorphism.

As in the Notation \ref{notation}, we omit $n$ in the above subfunctors and just write $F_{\CN}$ and $F^{\CN}$ to denote them.

\begin{proposition}\label{additivesubfunctor}
Let $\CN$ be a subcategory of $\CM$. Then $F_{\CN}( - , - )$ and $F^{\CN}( - , - )$ are additive subfunctors of $\Ext^n_{\CM}( - , - )$.
\end{proposition}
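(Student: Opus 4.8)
The goal is to show that $F_{\CN}(-,-)$ and $F^{\CN}(-,-)$ are additive subfunctors of $\Ext^n_{\CM}(-,-)$. By the discussion immediately following Notation \ref{notation2.10}, and in view of the equivalence recorded in the paragraph after \cite{ASa} (an additive subfunctor corresponds to a collection of $n$-exact sequences closed under isomorphisms, direct sums, $n$-pullbacks and $n$-pushouts), the cleanest route is to verify these closure properties directly for the collection $F_{\CN}$; the argument for $F^{\CN}$ is dual and I would simply remark this at the end. I will use the characterization that $\xi$ lies in $F_{\CN}(M,M')$ precisely when $\CM(X,f^n)$ is an epimorphism for every $X\in\CN$, where $f^n$ is the final map of $\xi$.

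The plan is to proceed in four short steps. First, closure under isomorphism is immediate: an isomorphism of $n$-exact sequences induces an isomorphism on the terminal terms, and epimorphness of $\CM(X,f^n)$ is preserved. Second, for closure under direct sums, I would observe that $\CM(X,-)$ sends a finite direct sum of $n$-exact sequences to the direct sum of the induced long sequences, so surjectivity of each $\CM(X,f^n_i)$ yields surjectivity of $\CM(X,f^n_1\oplus f^n_2)$; this is where additivity of the subfunctor ultimately comes from. Third, and most importantly, I must handle $n$-pullbacks: given $\xi\in F_{\CN}(M,M')$ and any morphism $g\colon N\lrt M$ in $\CM$, the $n$-pullback $\xi g$ must again lie in $F_{\CN}(N,M')$. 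Here I would apply Lemma \ref{induced-exact-sequence}: for each $X\in\CN$ the connecting map $\delta$ measures the failure of $\CM(X,f^n)$ to be epic, and naturality of the long exact sequence of Lemma \ref{induced-exact-sequence} with respect to the map $g$ shows that the class $\xi g$ is carried to $\delta(g\cdot(-))$, which vanishes once $\CM(X,f^n)$ is already epic. Dually, closure under $n$-pushouts along $h\colon M'\lrt L$ follows by the same naturality applied in the second variable.

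The main obstacle will be Step three, the stability under $n$-pullback, because I must relate the abstract Yoneda class $\xi g$ to the concrete exactness condition defining $F_{\CN}$. The safest way to control this is to realize the $n$-pullback as an explicit $n$-exact sequence via \cite[Proposition 4.8]{Ja} and then chase the naturality square of the exact sequence from Lemma \ref{induced-exact-sequence}; the key point is that $\CM(X,-)$ applied to the morphism of $n$-exact sequences $\xi g\lrt\xi$ commutes with the connecting homomorphisms $\delta$, so that the hypothesis $\delta_\xi=0$ forces $\delta_{\xi g}=0$ for every $X\in\CN$. Once these closure properties are established, the cited correspondence guarantees that $F_{\CN}$ is an additive subfunctor, and the identical argument with the roles of the two variables exchanged (using the second exact sequence of Lemma \ref{induced-exact-sequence}) disposes of $F^{\CN}$.
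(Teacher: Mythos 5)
Your proposal is correct, but it reaches the two crucial closure properties (under $n$-pullbacks and $n$-pushouts) by a genuinely different route from the paper. The paper argues diagrammatically: for pullback-closure it unwinds the explicit construction of $n$-pullbacks from \cite{Ja}, in which the rightmost square is assembled from the ordinary pullback $(Z,\alpha,\beta)$ in $\mmod\La$ together with a right $\CM$-approximation $\gamma\colon Y^n\lrt Z$, and then lifts an arbitrary test morphism from an object of $\CN$ first through the pullback and then through the approximation; pushout-closure is a one-line factorization. Your argument instead characterizes membership of $\xi$ in $F_{\CN}$ by the vanishing, for every $X\in\CN$, of the connecting map $\delta_{\xi}\colon\CM(X,M)\lrt\Ext^n_{\CM}(X,M')$, $h\mapsto\xi h$, and then propagates this vanishing through the composition laws $(\xi g)a=\xi(ga)$ and $(d\xi)a=d(\xi a)$, i.e.\ through naturality of $\delta$. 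This is shorter, treats pullbacks and pushouts symmetrically, and makes the mechanism transparent (closure is just functoriality of the vanishing of $\delta$); what it costs is a reliance on the full bifunctor structure of $\Ext^n_{\CM}(-,-)$ --- the associativity and interchange laws for the pullback and pushout actions --- which for $n>1$ is a genuine input from the literature, since $n$-pullbacks are only unique up to homotopy and these laws hold only at the level of Yoneda classes; the paper's paragraph \ref{ASadeghi} records additivity of the partial functors but not these laws explicitly. The paper's longer diagram chase avoids exactly this input, using nothing beyond the weak universal property of pullbacks and $\CM$-approximations, which are already in play. If you write your version up, cite explicitly where the composition laws for $\Ext^n_{\CM}$ are established (e.g.\ \cite{Ja}, \cite{Lu}), and note that working with Yoneda classes is exactly what your argument needs, since $\delta$ and the conditions defining $F_{\CN}$ and $F^{\CN}$ are invariant under Yoneda equivalence.
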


\begin{proof}
We prove that $F_{\CN}( - , - )$ is an additive subfunctor of $\Ext^n_{\CM}( - , - )$. The proof of this fact for $F^{\CN}( - , - )$ is similar and thus omitted. To this end, in view of \ref{ASadeghi}, we prove that $F_{\CN}( - , - )$ is closed under $n$-pullbacks and $n$-pushouts along any other morphism in $\CM$ and is furthermore closed under direct sums of $F$-exact sequences. Let $\xi: 0 \lrt M' \lrt M^1\lrt \cdots \lrt M^n\st{f}\lrt M \lrt 0$ be an $n$-exact sequence in $F_{\CN}(M, M')$ and let $h: X \lrt M$ be a morphism in $\CM$. Consider the $n$-pullback diagram
\begin{equation*}
\begin{tikzcd}
\xi h \colon  &0 \rar & M' \rar \dar[equal] & Y^1  \rar\dar & Y^2 \rar \dar & \cdots \rar &Y^n\rar{t}\dar{s} & X  \dar{h}\rar & 0   \\
\xi \colon &0 \rar & M' \rar & M^1\rar & M^2 \rar & \cdots\rar & M^n\rar{f}& M \rar  & 0
\end{tikzcd}
\end{equation*}
of $\xi$ along $h$. According to the construction of $n$-pullbacks \cite{Ja}, the rightmost square of the above diagram is obtained as depicted in the following diagram
\begin{equation*}
\footnotesize
\begin{tikzcd}
Y^{n} \ar{dd}{s} \ar{rr}{t} \drar{\gamma} && X  \ar{dd}{h} \\
& Z \urar{\alpha} \dlar{\beta} \\
M^n \ar{rr}{f} && M
\end{tikzcd}
\end{equation*}
where $(Z, \alpha, \beta)$ is the pullback of $f$ along $h$ in $\mmod \La$ and $\gamma: Y^n \lrt Z$ is a right $\CM$-approximation of $Z$. Let $g: N \lrt X$ be an arbitrary morphism in $\CM$, where $N \in \CN$. Since $\xi$ is in $F_{\CN}(M, M')$, there is a morphism $k:N \lrt M^n$ such that $fk=hg$. Hence, by the properties of pullback diagrams, there exists a map $l:  N \lrt Z$ such that $\alpha l=g$. Since $\gamma$ is a right $\CM$-approximation of $Z$, there exists a map $v: N \lrt Y^n$ such that $\gamma v=l$. This implies that $g$ factors through $t$ via $v$. So the induced morphism $\CM(N, t):\CM(N, Y^n)\lrt \CM(N, X)$ is surjective. Hence $\xi h$ is in $F_{\CN}(X, M')$.

To prove that $F_{\CN}$ is closed under $n$-pushouts, let $d: M' \lrt L$ be a morphism in $\CM$. Consider the $n$-pushout diagram
\begin{equation*}
\begin{tikzcd}
\xi  \colon  &0 \rar & M' \rar \dar{d} & M^1  \rar\dar & M^2 \rar \dar & \cdots \rar &M^n\rar{f}\dar{u} & M \dar[equal]\rar & 0   \\
\xi d \colon &0 \rar & L \rar & L^1\rar & L^2 \rar & \cdots\rar & L^n\rar{f'}& M \rar  & 0
\end{tikzcd}
\end{equation*}
of $\xi$ along $d$, where $\xi$ is in $F_{\CN}$. Let $g: N \lrt M$ be an arbitrary morphism in $\CM$ with $N \in \CN$. Since $\xi$ is in $F_{\CN}(M, M')$, there is a morphism $h: N \lrt M^n$ such that $fh=g$. So $g$ factors through $f'$ via $uh$, that is the morphism $\CM(N, f'):\CM(N, L^n)\lrt \CM(N, M)$ is surjective. Consequently, $\xi d$ is in $F_{\CN}(M, L)$.

Finally, since $\CM(X, - )$, for $X \in \CM$, commutes with finite direct sums, we deduce that $F_{\CN}( - , - )$ is closed under finite direct sums of $F$-exact sequences.
\end{proof}

\begin{remark}\label{Lemma1-partone}
Let $F$ be an additive subfunctor of $\Ext^n_{\CM}(-, -)$. It follows from Proposition \ref{F-n-proj-proper} that $F=F_{\CP(F)}$, provided $F$ has enough projectives and $F=F^{\CI(F)}$, provided $F$ has enough injectives.
\end{remark}

We have the following relationship between these two additive subfunctors.

\begin{proposition}\label{n-almost-proj}
Let $\CN$ be a subcategory of $\CM$. Then $F_{\CN}=F^{\tau_n\CN},$ where for a subcategory $\CX$ of $\CM$, $\tau_n \CX:=\{\tau_nX\mid X\in \CX\}$.
\end{proposition}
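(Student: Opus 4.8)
The plan is to deduce this equality directly from Remark~\ref{n-exactness}, which is the only nontrivial input: the statement is, at heart, a bookkeeping of quantifiers applied to the biconditional ``$\CM(X,\xi)$ is exact if and only if $\CM(\xi,\tau_n X)$ is exact''. Since both $F_{\CN}$ and $F^{\tau_n\CN}$ are subfunctors of $\Ext^n_{\CM}(-,-)$, it suffices to prove the set equality $F_{\CN}(M,M')=F^{\tau_n\CN}(M,M')$ for each fixed pair $M,M'$ in $\CM$. So first I would fix $M$ and $M'$ and take an $n$-exact sequence $\xi$ representing an arbitrary element of $\Ext^n_{\CM}(M,M')$.

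Next I would run the equivalence objectwise. For a single $X\in\CN$, Remark~\ref{n-exactness} gives that $\CM(X,\xi)$ is exact if and only if $\CM(\xi,\tau_n X)$ is exact. Quantifying over all $X\in\CN$, the condition ``$\CM(X,\xi)$ is exact for every $X\in\CN$'' is therefore equivalent to ``$\CM(\xi,\tau_n X)$ is exact for every $X\in\CN$'', and the latter is by definition the condition ``$\CM(\xi,Y)$ is exact for every $Y\in\tau_n\CN$''. Reading off Notation~\ref{notation2.10}, the first condition says exactly that $\xi\in F_{\CN}(M,M')$ and the last says exactly that $\xi\in F^{\tau_n\CN}(M,M')$. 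Hence $F_{\CN}(M,M')=F^{\tau_n\CN}(M,M')$, and letting $M,M'$ vary yields $F_{\CN}=F^{\tau_n\CN}$.

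I do not expect a genuine obstacle here, since the analytic content is entirely absorbed into Remark~\ref{n-exactness} (that is, into Proposition~3.2.1 of \cite{I1}). The points that do require a word of care are purely formal: one should note that the defining property ``$\CM(X,\xi)$ is exact'' is invariant under Yoneda equivalence, so that it genuinely descends to $\Ext^n_{\CM}(M,M')$ and $F_{\CN},F^{\tau_n\CN}$ are well defined (this is already implicit in Proposition~\ref{additivesubfunctor}); that for $X\in\CN\subseteq\CM$ the object $\tau_n X$ again lies in $\CM$, so the expression $\CM(\xi,\tau_n X)$ makes sense and $\tau_n\CN$ is a legitimate subcategory of $\CM$; and that the potentially worrisome case of projective $X$ is harmless, since then $\tau_n X=0$ while $\CM(X,\xi)$ is automatically exact, so both sides of the biconditional hold vacuously. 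With these observations in place, the proposition is an immediate corollary of Remark~\ref{n-exactness}.
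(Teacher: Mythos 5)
Your proof is correct and follows essentially the same route as the paper, which disposes of the proposition in one line as ``an immediate consequence of Remark \ref{n-exactness}''; your objectwise quantifier bookkeeping, the check of invariance under Yoneda equivalence, and the remark about projective $X$ simply make explicit what the paper leaves implicit.
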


\begin{proof}
The proof is an immediate consequence of Remark \ref{n-exactness}.
\end{proof}

An $n$-almost split sequence 
\[\xi \colon 0 \lrt X^0 \lrt X^1 \lrt X^2 \lrt \cdots \lrt X^n \lrt X^{n+1} \lrt 0 \]
is called $n$-$F$-almost split if it is $F$-exact. For simplicity, we drop $n$ and refer to it as an $F$-almost split sequence.

\begin{proposition}\label{ind-non-proj-non-inj}
Let $X$ and $Y$ be, respectively, indecomposable non-projective and indecomposable non-injective modules in $\CM$. The following assertions hold.
\begin{itemize}
\item [$(1)$] If $F$ has enough projectives, then $X \in \CP(F)$ if and only if the $n$-almost split sequence
\[\xi \colon \ 0 \lrt \tau_nX \lrt M^1 \lrt \cdots \lrt M^n \lrt X \lrt 0 \]
is not $F$-exact.
\item [$(2)$] If $F$ has enough injectives, then $Y \in \CI(F)$ if and only if the $n$-almost split sequence
\[\eta \colon 0 \lrt Y \lrt M^1 \lrt \cdots \lrt M^n \lrt \tau^{-1}_nY \lrt 0\]
is not $F$-exact.
\end{itemize}
\end{proposition}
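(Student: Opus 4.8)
The plan is to prove part $(1)$ in full and to obtain part $(2)$ by the evident dual argument (replacing $\CP(F)$, right approximations and $f^n$ by $\CI(F)$, left approximations and $f^0$, and invoking Corollary~\ref{vanishing-proj-inj}$(ii)$ together with Proposition~\ref{F-n-proj-proper}$(2)$ in place of their counterparts). For $(1)$ I would establish the two implications separately, both resting on the right $n$-almost split property of $f^n\colon M^n\lrt X$ recorded in \ref{almost split} and on the fact that $\CP(F)$ is closed under direct summands.

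For the implication ``$X\in\CP(F)\Rightarrow\xi$ is not $F$-exact'', I would first observe that $\xi$ is non-split: all its maps lie in $\CJ_{\La}$, so $f^n$ is not a split epimorphism, whence $\xi$ represents a nonzero class in $\Ext^n_{\CM}(X,\tau_nX)$ by \cite[Proposition~2.6]{Ja}. If $X$ were $F$-projective, then Corollary~\ref{vanishing-proj-inj}$(i)$ would give $F(X,\tau_nX)=0$, so a nonzero non-split class such as $\xi$ could not be $F$-exact; this is exactly the contrapositive.

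For the converse I would assume $X\notin\CP(F)$ and show that $\xi$ is $F$-exact. Since $F$ has enough projectives, Proposition~\ref{F-n-proj-proper}$(1)$ reduces this to the exactness of $\CM(P,\xi)$ for every $F$-projective $P$, which, by the long exact sequence of Lemma~\ref{induced-exact-sequence}, amounts to the surjectivity of $\CM(P,f^n)$; equivalently, every morphism $g\colon P\lrt X$ must factor through $f^n$. The key point is that no such $g$ can be a split epimorphism: a section would exhibit $X$ as a direct summand of $P$, and since additivity of $F$ combined with Corollary~\ref{vanishing-proj-inj}$(i)$ shows that $\CP(F)$ is summand-closed, this would force $X\in\CP(F)$, contrary to assumption. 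As $X$ is indecomposable, a non-split-epimorphism $P\lrt X$ lies in $\CJ_{\La}(P,X)$, so in fact $\CM(P,X)=\CJ_{\La}(P,X)$; the defining exactness of the $n$-almost split sequence then makes $\CM(P,M^n)\lrt\CJ_{\La}(P,X)$ surjective, which is precisely the surjectivity of $\CM(P,f^n)$ that was needed.

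I expect the main obstacle to lie in the converse, and within it in the step excluding split epimorphisms $P\lrt X$: this is the only place where the hypothesis $X\notin\CP(F)$ genuinely enters, and it requires knowing both that $\CP(F)$ is closed under summands and that, because $X$ is indecomposable, ``not a split epimorphism'' coincides with ``lying in the radical'', so that the almost split factorization property applies to \emph{every} morphism from an $F$-projective into $X$.
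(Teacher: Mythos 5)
Your proof is correct, and while the first implication coincides with the paper's argument (the paper says an $F$-exact sequence ending at an $F$-projective module splits, you say $F(X,\tau_nX)=0$ forces the non-split class $\xi$ out of $F$; these are the same observation), your converse takes a genuinely different route. The paper argues by contradiction: assuming $X\notin\CP(F)$, it uses enough projectives to produce a \emph{single} $F$-exact sequence $\eta$ ending in a non-split epimorphism $P\lrt X$, factors that epimorphism through the right almost split map $f^n$, completes this square to a morphism of $n$-exact sequences $\eta\lrt\xi$ using weak kernels, and then invokes \cite[Proposition 4.8]{Ja} together with the closure of $F$ under $n$-pushouts to conclude that $\xi$ is $F$-exact. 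You instead reduce the question, via Proposition \ref{F-n-proj-proper}$(1)$, to the surjectivity of $\CM(P,f^n)$ for \emph{every} $F$-projective $P$, and verify this pointwise: any $g\colon P\lrt X$ fails to be a split epimorphism (by summand-closure of $\CP(F)$, which you rightly justify from additivity of $F$ and Corollary \ref{vanishing-proj-inj}$(i)$), hence lies in $\CJ_{\La}(P,X)$ because $X$ is indecomposable, hence factors through $f^n$ by the defining exactness of the $n$-almost split sequence in \ref{almost split}. Both proofs turn on the same core fact --- morphisms from $F$-projective modules to $X$ are never split epimorphisms and therefore lift along $f^n$ --- but your version delegates the diagram construction and pushout-closure to the already established Proposition \ref{F-n-proj-proper}, so it avoids re-running the weak-kernel argument and also makes explicit the non-splitness justification that the paper leaves tacit; the paper's version is more self-contained, needing the factorization only for one morphism rather than for all $F$-projectives.

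One small citation point: for the reduction ``exactness of $\CM(P,\xi)$ is equivalent to surjectivity of $\CM(P,f^n)$'' you appeal to Lemma \ref{induced-exact-sequence}, which is stated only for $F$-exact sequences, whereas at that stage $\xi$ is merely $n$-exact. The fact you actually need --- that $\CM(P,-)$ applied to any $n$-exact sequence is exact except possibly at the final term --- is immediate from the $n$-kernel condition in the definition of $n$-exact sequences (or from \cite[Lemma 3.5]{I3}), so the argument stands; just cite that instead.
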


\begin{proof}
We will only provide proof for Statement (1) since Statement (2) has a dual proof. Assume that $X \in \CP(F)$. The $n$-almost split sequence $\xi$ in $\CM$ ending at $X$ can not be $F$-exact, as otherwise, it splits, a contradiction. Conversely, assume that the $n$-almost split sequence
\[\xi \colon 0 \lrt \tau_nX \lrt M^1 \lrt \cdots \lrt M^{n-1} \lrt M^n\st{f} \lrt X \lrt 0 \]
is not $F$-exact. If $X$ is not in $\CP(F),$ there is an $F$-exact sequence
\[\eta \colon  0 \lrt L^0 \lrt L^1 \lrt \cdots \lrt L^{n-1} \lrt P \st{g}{\lrt} X \lrt 0\]
with a non-split epimorphism $P\st{g}\lrt X \lrt 0$.  Since $f$ is a right almost split morphism, there is a morphism $h: P \lrt M^n$ that makes the diagram
\begin{equation*}
\begin{tikzcd}
P \rar{g}\dar{h} & X \dar[equal]\rar & 0   \\
 M^n\rar{f}& X \rar  & 0
\end{tikzcd}
\end{equation*}
 commutative. Using the property of weak kernels, we can complete the above square to the commutative diagram
\begin{equation*}
\begin{tikzcd}
 \eta \colon  &0 \rar & L^0 \rar \dar{d} & L^1  \rar\dar & L^2 \rar \dar & \cdots \rar & P \rar{g}\dar{h} & X  \dar[equal]\rar & 0   \\
 \xi \colon &0 \rar & \tau_nX \rar & M^1\rar & M^2 \rar & \cdots\rar & M^n\rar{f}& X \rar  & 0.
\end{tikzcd}
\end{equation*}
Hence the $n$-exact sequence $\xi$ is an $n$-pushout of the $F$-exact sequence $\eta$ along $d$. This implies that $\xi$ is an $F$-exact sequence, a contradiction.
\end{proof}

\begin{proposition}\label{proj-injectiv-objec}
Let $\CN$ be an additive subcategory of $\CM$. Then
\begin{itemize}
    \item [$(i)$] $\CP(F_{\CN})=\CN\cup \CP(\La)$.
    \item [$(ii)$] $\CI(F^{\CN})=\CN\cup \CI(\La)$.
\end{itemize}
\end{proposition}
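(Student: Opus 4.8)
The plan is to prove $(i)$; statement $(ii)$ then follows by the evident dualization, replacing $F_{\CN}$ by $F^{\CN}$, right approximations and $n$-almost split sequences ending at an object by the left-hand/costarting versions, and $\CP(F_{\CN})$, $\CP(\La)$ by $\CI(F^{\CN})$, $\CI(\La)$. By Proposition \ref{additivesubfunctor}, $F_{\CN}$ is a genuine additive subfunctor, so Corollary \ref{vanishing-proj-inj}$(i)$ applies and reduces the task to the criterion that $P\in\CP(F_{\CN})$ if and only if $F_{\CN}(P,M)=0$ for every $M\in\CM$. Since $F_{\CN}$ is additive, $\CP(F_{\CN})$ is closed under direct sums and direct summands; as $\CM$ is Krull--Schmidt it suffices to decide membership on indecomposables, and I read $\CN\cup\CP(\La)$ as the additive subcategory whose indecomposable objects are those of $\CN$ together with those of $\CP(\La)$.

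For the inclusion $\CN\cup\CP(\La)\subseteq\CP(F_{\CN})$ I treat the two pieces separately. If $P$ is projective, then $\Ext^n_{\CM}(P,-)=0$ (this is the identity $\CP(\Ext^n_{\La})=\CP(\La)$ recorded just after the definition of $\CP(F)$), hence $F_{\CN}(P,-)=0$ and $P\in\CP(F_{\CN})$. If $N\in\CN$, I take an arbitrary $\xi\colon 0\to M'\to M^1\to\cdots\to M^n\st{f^n}{\lrt}N\to 0$ in $F_{\CN}(N,M')$; by the characterization following Notation \ref{notation2.10} the map $\CM(X,f^n)$ is an epimorphism for every $X\in\CN$, and choosing $X=N$ shows that $\id_N$ lifts along $f^n$. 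Thus $f^n$ is a split epimorphism and $\xi$ splits by \cite[Proposition 2.6]{Ja}, so $F_{\CN}(N,M')=0$ and $N\in\CP(F_{\CN})$.

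The reverse inclusion carries the real content. Let $P\in\CP(F_{\CN})$ be indecomposable; if $P$ is projective it already lies in $\CP(\La)$, so assume $P$ is non-projective and consider the $n$-almost split sequence $\xi\colon 0\to\tau_nP\to M^1\to\cdots\to M^n\st{f}{\lrt}P\to 0$ from \ref{almost split}. Since $P$ is $F_{\CN}$-projective and $\xi$ does not split, $\xi$ cannot be $F_{\CN}$-exact, for otherwise $\CM(P,\xi)$ would be exact and $\id_P$ would split $\xi$. The key step is then the equivalence
\[
\xi\in F_{\CN}\ \Longleftrightarrow\ P\notin\CN ,
\]
which I would derive from the right almost split property of $f$: a morphism $X\to P$ factors through $f$ precisely when it is not a split epimorphism, so $\CM(X,f)$ is surjective if and only if no morphism $X\to P$ is a split epimorphism, i.e. if and only if $P$ is not a direct summand of $X$. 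As $\CN$ is closed under summands, $\CM(X,f)$ is epi for all $X\in\CN$ exactly when $P\notin\CN$. Combined with the fact that $\xi\notin F_{\CN}$, this forces $P\in\CN$, finishing the argument on indecomposables and hence, by additivity, the proof.

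The hard part is the displayed equivalence, namely translating the functorial condition ``$\CM(X,f)$ is an epimorphism for every $X\in\CN$'' into the summand condition ``$P\nmid X$''. This requires the right almost split property of $f$ together with the indecomposability of $P$ (so that $X\to P$ is a split epimorphism exactly when $P$ is a summand of $X$) and the standing hypothesis that $\CN$ is closed under direct summands; the remaining verifications are routine and dualize verbatim to give $(ii)$.
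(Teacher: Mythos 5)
Your proof is correct and takes essentially the same approach as the paper: both arguments hinge on the $n$-almost split sequence ending at an indecomposable non-projective object and the right almost split property of its last morphism, which shows that this sequence is $F_{\CN}$-exact precisely when its end term lies outside $\CN$. The only cosmetic difference is that you derive the splitting contradiction inline and spell out the inclusion $\CN\cup\CP(\La)\subseteq\CP(F_{\CN})$ that the paper calls clear, whereas the paper delegates the final step to Proposition \ref{ind-non-proj-non-inj}; your inline derivation has the minor advantage of making visible that only the easy direction of that proposition (which needs no ``enough projectives'' hypothesis) is used.
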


\begin{proof}
It is clear that $\CN\cup \CP(\La)\subseteq \CP(F_{\CN})$. Let $X$ be an indecomposable module that does not belong to $\CN \cup \CP(\La)$. We show that $X$ is not included in $\CP(F_{\CN})$. This completes the proof. Let
\[\xi \colon 0 \lrt \tau_nX \lrt M^1 \lrt \cdots \lrt M^{n-1} \lrt M^n\st{f} \lrt X \lrt 0 \]
be the $n$-almost split sequence in $\CM$ ending at $X$. Since $X$ is not in $\CN$, every morphism $g: N \lrt X$ with $N \in \CN $ is not a spilt epimorphism. The property of $f$ being right almost split implies that $g$ factors through $f$. Hence $\xi$ is an $F_{\CN}$-exact sequence. Therefore Proposition \ref{ind-non-proj-non-inj} yields that $X$ is not in $\CP(F_{\CN})$, as desired.
\end{proof}

We also need the following lemma that is of independent interest.

\begin{lemma}\label{cova-contra-tau}
Let $\CN$ be an additive subcategory of $\CM$. Then
\begin{itemize}
    \item [$(1)$] $\CN \cup \CP(\La)$ is a covariantly finite subcategory of $\CM$ if and only if so is $\tau_n \CN \cup \CI(\La)$.
    \item [$(2)$] $\CN \cup \CP(\La)$ is a contravariantly finite subcategory of $\CM$ if and only if so is $\tau_n \CN \cup \CI(\La)$.
\end{itemize}
\end{lemma}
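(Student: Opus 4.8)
\emph{Strategy.} The plan is to pass from $\CM$ to its stable category $\underline{\CM}=\CM/[\CP(\La)]$ and its costable category $\overline{\CM}=\CM/[\CI(\La)]$, where the comparison is governed by the $n$-Auslander--Reiten translation. Throughout, write $\underline{\CN}$ and $\overline{\tau_n\CN}$ for the images of $\CN$ and $\tau_n\CN$ in $\underline{\CM}$ and $\overline{\CM}$, respectively. The starting point is that $\CP(\La)$ and $\CI(\La)$ are of finite type, hence functorially finite in $\CM$; in particular every object of $\CM$ admits left and right $\CP(\La)$- and $\CI(\La)$-approximations.

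\emph{Reduction to the stable and costable categories.} First I would record four reductions: for $\CP(\La)\subseteq\CY\subseteq\CM$ the category $\CY$ is covariantly (resp. contravariantly) finite in $\CM$ if and only if $\underline{\CY}$ is covariantly (resp. contravariantly) finite in $\underline{\CM}$; and dually, for $\CI(\La)\subseteq\CZ\subseteq\CM$, the category $\CZ$ is covariantly (resp. contravariantly) finite in $\CM$ if and only if $\overline{\CZ}$ is so in $\overline{\CM}$. One direction is immediate, since an approximation descends to the quotient. For the converse one lifts a stable approximation $\bar{f}\colon\bar{M}\to\bar{Y}$ to a map $f\colon M\to Y$ and adjoins a left (resp. right) $\CP(\La)$-approximation: any competing morphism differs from a factorization through $f$ by a morphism factoring through a projective, and the latter factors through the chosen $\CP(\La)$-approximation, so $f$ together with that approximation is a genuine $\CY$-approximation. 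Applying these to $\CY=\CN\cup\CP(\La)$ and $\CZ=\tau_n\CN\cup\CI(\La)$, and using $\underline{\CN\cup\CP(\La)}=\underline{\CN}$ and $\overline{\tau_n\CN\cup\CI(\La)}=\overline{\tau_n\CN}$, reduces both $(1)$ and $(2)$ to the single assertion that $\underline{\CN}$ is covariantly (resp. contravariantly) finite in $\underline{\CM}$ if and only if $\overline{\tau_n\CN}$ is so in $\overline{\CM}$.

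\emph{Transport along $\tau_n$.} To finish I would use that $\tau_n=D\Tr\Omega^{n-1}$ restricts to an equivalence $\tau_n\colon\underline{\CM}\xrightarrow{\ \sim\ }\overline{\CM}$, with quasi-inverse $\tau_n^{-}$, sending $\underline{\CN}$ onto $\overline{\tau_n\CN}$. Being a covariant equivalence, $\tau_n$ carries left (resp. right) $\underline{\CN}$-approximations to left (resp. right) $\overline{\tau_n\CN}$-approximations and back; hence $\underline{\CN}$ is covariantly (resp. contravariantly) finite in $\underline{\CM}$ exactly when $\overline{\tau_n\CN}$ is so in $\overline{\CM}$. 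Combined with the previous reduction this yields both $(1)$ and $(2)$ at once. Note that the proof preserves the type of finiteness precisely because $\tau_n$ is covariant (a composite of the two dualities $D$ and $\Tr$ with the covariant functor $\Omega^{n-1}$), rather than a single duality, which would interchange the two types.

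\emph{Main obstacle.} The crux is the last step: the excerpt only records the bijection of \cite[Prop. 1.2]{I3} between indecomposable non-projectives and non-injectives, whereas the argument needs $\tau_n$ as a genuine equivalence of the stable and costable categories, so that it respects the factorizations defining approximations. Promoting the bijection to this functorial equivalence --- or, failing that, transporting approximations by hand along the $n$-almost split sequences $0\to\tau_nX\to M^1\to\cdots\to M^n\to X\to0$ and exploiting the right/left almost split property to produce the required factorizations --- is where the real work lies. By contrast, the reductions of the second paragraph are routine once the finite type of $\CP(\La)$ and $\CI(\La)$ is used to correct the lifted approximations.
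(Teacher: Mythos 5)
Your proof is correct and takes essentially the same route as the paper: the paper likewise transports finiteness along the equivalence $\tau_n\colon\underline{\CM}\to\overline{\CM}$, restricted to $\underline{\CN}\simeq\overline{\tau_n\CN}$, and then lifts the resulting stable approximation back to $\CM$ by adjoining a left $\CP(\La)$-approximation and absorbing the projective-factoring discrepancy, which is exactly your correction argument. The stable equivalence that you flag as the main obstacle is also invoked by the paper without further proof (it is a known consequence of Iyama's higher Auslander--Reiten theory), so your proposal has no gap relative to the paper's own standard.
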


 \begin{proof}
We will only consider the `only if' part of the statement (1), as the proofs for the other implication and for statement (2) are similar. Assume that $\tau_n \CN \cup \CI(\La)$ is a covariantly finite subcategory of $\CM$. Set $\CU:= \tau_n \CN$. It is obvious that $\overline{\CU}$ is a covariantly finite subcategory of $\overline{\CM}$. The equivalence $\tau_n:\underline{\CM}\lrt \overline{\CM}$ will be restricted to the equivalence $\tau_n| : \underline{\CN} \lrt \overline{\CU}$, hence the diagram
\[ \xymatrix{ \underline{\CM}\ar[r]^{\tau_n} &  \overline{\CM} \\
	\underline{\CN}	\ar@{^(->}[u] \ar[r]^{\tau_n|} & \overline{\CU} \ar@{^(->}[u]}\]
is commutative. The equivalence $\tau_n|$, in particular, implies that $\underline{\CN}$ is a covariantly finite subcategory of $\underline{\CM}$. We use this fact to show that $\CN \cup \CP(\La)$ is a covariantly finite subcategory of $\CM$. To do this, let $M \in \CM$ be an arbitrary element. By considering $\underline{M}$ as an object in $\underline{\CM}$, we deduce that there exists a left $\underline{\CN}$-approximation $\underline{f}:  \underline{M} \lrt \underline{X}$ in $\underline{\CM}$, where $\underline{f}$ is the residue class of a morphism $f: M \lrt X$ in $\CM$.
Let $e: M \lrt P$ be a left $\CP(\La)$-approximation of $M$. We claim that $\left[\begin{smallmatrix}f\\ e \end{smallmatrix}\right]:M \lrt X\oplus P$ is a left $\CX \cup \CP(\La)$-approximation of $M$. To prove the claim, let $g: M \lrt M'$ be a morphism with $M' \in \CN \cup \CP(\La)$. Passing to the stable category, we deduce that there exists a morphism $\underline{h}: \underline{M} \lrt \underline{M'}$ such that $\underline{h}\underline{f}=\underline{g}$. This, in particular, implies that $g - hf$ factors through a projective module $Q$, i.e., there exists morphisms $u: M \lrt Q$ and $\ell: Q \lrt M'$ making the diagram
\[\begin{tikzcd}
M \ar{rr}{g-hf} \drar{u} && M'   \\
& Q \urar{\ell}
\end{tikzcd}\]
commutative. On the other hand, since $e: M \lrt P$ is a left $\CP(\La)$-approximation of $M$, there exists $w: P \lrt Q$ such that $w e = u$. Now it is easy to see that the diagram
\begin{equation*}
\begin{tikzcd}
M\rar{\left[\begin{smallmatrix}f\\ e \end{smallmatrix}\right]}\dar{g} & X\oplus P\dlar{[h~~\ell w]}   \\ M'&
\end{tikzcd}
\end{equation*}
is commutative. This proves the claim and hence completes the proof.
\end{proof}

Now we have the necessary background to prove the main theorem of this section. 

\begin{theorem}\label{main-Theorem}
The maps $F\mapsto \CP(F)$ and $\CN \mapsto F_{\CN}$ induce mutually inverse bijections between the following two classes.
    \begin{itemize}
        \item [$(A)$] Additive subfunctors $F$ of $\Ext^n_{\CM}( - , - )$ with enough projectives.
        \item [$(B)$] Contravariantly finite subcategories $\CN$ of $\CM$ containing $\CP(\La)$.
    \end{itemize}
Moreover, these maps are restricted to mutually inverse bijections between the following two classes.
 \begin{itemize}
        \item [$(a)$] Additive subfunctors $F$ of $\Ext^n_{\CM}(-, -)$ with enough projectives and enough injectives.
        \item [$(b)$] Functorially finite subcategories $\CN$ of $\CM$  containing $\CP(\La)$.
    \end{itemize}
\end{theorem}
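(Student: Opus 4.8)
The plan is to verify four things: that each of the two assignments carries its source class into the stated target class, that the two composites are identities, and finally that the correspondence restricts to the bijection between (a) and (b). The mutual-inverse step is immediate from the results already proved and can be disposed of first. For an $F$ with enough projectives, Remark \ref{Lemma1-partone} gives $F_{\CP(F)}=F$; and for a contravariantly finite $\CN$ with $\CP(\La)\subseteq\CN$, Proposition \ref{proj-injectiv-objec}$(i)$ gives $\CP(F_{\CN})=\CN\cup\CP(\La)=\CN$. So the whole statement hinges on well-definedness.

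To see that $F\mapsto\CP(F)$ lands in class (B), I first observe $\CP(\La)\subseteq\CP(F)$: for projective $P$ the group $\Ext^n_{\CM}(P,-)$ vanishes, hence so does $F(P,-)$, and Corollary \ref{vanishing-proj-inj}$(i)$ places $P$ in $\CP(F)$. For contravariant finiteness, fix $M\in\CM$ and use the hypothesis of enough projectives to obtain an $F$-exact sequence $0\lrt M^1\lrt\cdots\lrt M^n\lrt P\lrt M\lrt 0$ with $P$ an $F$-projective; applying $\CM(Q,-)$ for $Q\in\CP(F)$ and using its exactness shows every morphism $Q\lrt M$ factors through $P\lrt M$, so $P\lrt M$ is a right $\CP(F)$-approximation of $M$.

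The reverse assignment $\CN\mapsto F_{\CN}$ is already an additive subfunctor by Proposition \ref{additivesubfunctor}, so the only point to check for membership in (A) is that $F_{\CN}$ has enough projectives. Given $M\in\CM$, I would take a right $\CN$-approximation $g\colon C\lrt M$ with $C\in\CN$; since $\CP(\La)\subseteq\CN$, a projective cover of $M$ factors through $g$, forcing $g$ to be an epimorphism, which I then complete to an $n$-exact sequence $0\lrt M^1\lrt\cdots\lrt M^n\lrt C\st{g}\lrt M\lrt 0$ in the $n$-abelian category $\CM$. By the description of $F_{\CN}$ in Notation \ref{notation2.10}, this sequence is $F_{\CN}$-exact precisely because $g$ is a right $\CN$-approximation, and $C\in\CN=\CP(F_{\CN})$ by Proposition \ref{proj-injectiv-objec}$(i)$. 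I expect this step — recognizing the $\CN$-approximation itself as the $F_{\CN}$-projective cover — to be the conceptual heart of the argument, even though each individual verification is routine once the cited statements are in hand.

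For the restricted bijection between (a) and (b) I would transport finiteness conditions across $\tau_n$. If $F$ has enough projectives and enough injectives, then $\CP(F)$ is contravariantly finite by the above, while the dual of that argument makes $\CI(F)$ covariantly finite; since $\CI(F)=\tau_n(\CP(F))\cup\CI(\La)$ (the Corollary following Remark \ref{n-exactness}, valid because $F$ has enough projectives and injectives), Lemma \ref{cova-contra-tau}$(1)$ transports covariant finiteness back to $\CP(F)\cup\CP(\La)=\CP(F)$, so $\CP(F)$ is functorially finite. Conversely, if $\CN$ is functorially finite with $\CP(\La)\subseteq\CN$, then $\tau_n\CN\cup\CI(\La)$ is covariantly finite by Lemma \ref{cova-contra-tau}$(1)$; as $F_{\CN}=F^{\tau_n\CN}$ by Proposition \ref{n-almost-proj} and $\CI(F^{\tau_n\CN})=\tau_n\CN\cup\CI(\La)$ by Proposition \ref{proj-injectiv-objec}$(ii)$, the dual of the well-definedness argument applied to the covariantly finite category $\tau_n\CN\cup\CI(\La)\supseteq\CI(\La)$ gives that $F^{\tau_n\CN\cup\CI(\La)}$ has enough injectives; this functor coincides with $F^{\tau_n\CN}=F_{\CN}$ since $\CM(\xi,I)$ is automatically exact for injective $I$. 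The main obstacle in this last part is purely organizational: lining up the $\tau_n$-duality $F_{\CN}=F^{\tau_n\CN}$ with the transport Lemma \ref{cova-contra-tau} so that the two finiteness conditions correspond correctly, after which the genuinely new content is minimal.
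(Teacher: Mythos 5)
Your proof is correct and follows essentially the same route as the paper's: the same use of Remark \ref{Lemma1-partone} and Proposition \ref{proj-injectiv-objec} for the mutual-inverse step, the same construction of right $\CP(F)$-approximations from $F$-exact sequences and of $F_{\CN}$-exact sequences from right $\CN$-approximations (where you correctly use a \emph{right} approximation, fixing a left/right typo in the paper), and the same $\tau_n$-transport via Proposition \ref{n-almost-proj} and Lemma \ref{cova-contra-tau} for the correspondence between $(a)$ and $(b)$.
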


\begin{proof}
Let $F$ be an additive subfunctor of $\Ext^n_{\CM}( - , - )$ with enough projectives. Let $M \in \CM$. Since $F$ has enough projections, there exists an $F$-exact sequence 
\[\xi \colon \ 0 \lrt M^1 \lrt M^2 \lrt \cdots \lrt M^n \lrt P \lrt M \lrt 0,\]
with $P \in \CP(F)$. Now Proposition \ref{F-n-proj-proper} implies that $P \lrt M$ is a right $\CP(F)$-approximation of $M$. Hence $\CP(F)$ is a contravariantly finite subcategory of $\CM$ that clearly contains $\CP(\La)$.

On the other hand, let $\CN \subseteq \CM$ be a contravariantly finite subcategory that contains $\CP(\La)$. In view of Proposition \ref{additivesubfunctor} we only show that $F_{\CN}$ has enough projections. 

Let $M \in \CM$. There exists a left $\CN$-approximation $f: N \lrt M$ which is an epimorphism, because $\CN$ contains $\CP(\La)$. Consider an $n$-kernel of $f$ in $\CM$ to get the $n$-exact sequence
\[\xi \colon 0 \lrt M^1 \lrt M^2 \lrt \ldots \lrt M^n \lrt N \st{f}{\lrt} M \lrt 0.\]
Since $f$ is a left $\CN$-approximation, $\xi$ is obviously $F_{\CN}$-exact and $N$ is $F_{\CN}$-projective. Hence $F_{\CN}$ has enough projective objects. 
Now Remark \ref{Lemma1-partone} and Proposition \ref{proj-injectiv-objec} imply that these operations are mutually inverse.

For the `Moreover' part, assume that the subfunctor $F$ has enough projectives and enough injectives. Since $F$ has enough projectives, by Remark \ref{Lemma1-partone} and Proposition \ref{n-almost-proj}, $F=F_{\CP(F)}=F^{\tau_n\CP(F)}$. Since $F$ has enough injectives, in view of Propositions \ref{F-n-proj-proper} and \ref{proj-injectiv-objec}, we deduce that $\tau_n\CP(F)\cup\CI(\La)$ is a covariantly finite subcategory of $\CM$. Lemma \ref{cova-contra-tau} implies that $\CP(F)=\CP(F)\cup\CP(\La)$ is also covariantly finite in $\CM$. Hence  $\CP(F)$ is functorially finite. Therefore, the map $F \mapsto\CP(F)$ restricts to a map from $(a)$  to $(b)$. Conversely, let $\CN$ be a functorially finite subcategory of $\CM$. So, by the first part, $F:=F_{\CN}$ has enough projectives. It remains to show that $F$ has enough injectives as well. By Proposition \ref{n-almost-proj}, $F=F_{\CN}=F^{\tau_n\CN}=F^{\tau_n\CN \cup \CI(\La)}$. By Lemma \ref{cova-contra-tau}, $\tau_n\CN \cup \CI(\La)$ is a covariantly finite subcategory of $\CM$. Using a dual version of the first part implies that $F=F^{\tau_n\CN \cup \CI(\La)}$ has enough injectives. Hence the map $\CN \mapsto F_{\CN}$ restricts from $(b)$ to $(a)$. This completes the proof of the second part.
\end{proof}

As a corollary, we can record the following.

\begin{corollary}\label{finittype-enoughproje-enoughinject}
Let $\CM$ be an $n$-cluster tilting subcategory of $\mmod\La$ of finite type. Let $F$ be a subfunctor of $\Ext^n_{\CM}(-, -)$.  The following conditions are equivalent.
\begin{itemize}
\item[$(1)$] $F$ has enough projectives.
\item [$(2)$] $F$ has enough injectives.
\item [$(3)$] $F$ has enough projectives and enough injectives.
\end{itemize}
\end{corollary}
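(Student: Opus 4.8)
The plan is to derive the whole statement from Theorem \ref{main-Theorem} and its injective dual, the only extra ingredient being the observation that finite type forces every subcategory to be functorially finite. The implications $(3) \Rightarrow (1)$ and $(3) \Rightarrow (2)$ are trivial, so all the content lies in $(1) \Rightarrow (3)$ and $(2) \Rightarrow (3)$, which I would treat by dual arguments.

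First I would record the key fact: since $\CM$ is of finite type, every additive subcategory $\CN$ of $\CM$ is functorially finite. Indeed, one may write $\CN = \add N$, where $N$ is the direct sum of a complete set of indecomposable objects lying in $\CN$. For any $M \in \CM$ the $R$-module $\CM(N, M)$ has finite length, hence is generated by finitely many morphisms $f_1, \ldots, f_t : N \lrt M$; the induced morphism $[f_1 \ \cdots \ f_t] : N^t \lrt M$ is then a right $\CN$-approximation of $M$, and a left $\CN$-approximation is obtained dually. Thus $\CN$ is functorially finite, with no hypothesis beyond finite type.

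For $(1) \Rightarrow (3)$, I would assume $F$ has enough projectives. By Theorem \ref{main-Theorem} the subcategory $\CP(F)$ is contravariantly finite and contains $\CP(\La)$, and by Remark \ref{Lemma1-partone} we have $F = F_{\CP(F)}$. The key fact above upgrades $\CP(F)$ to a functorially finite subcategory, so it lies in class $(b)$ of Theorem \ref{main-Theorem}. Invoking the ``$(b) \rightsquigarrow (a)$'' direction of the Moreover part of that theorem with $\CN = \CP(F)$ shows that $F = F_{\CP(F)}$ has enough projectives \emph{and} enough injectives, which is exactly $(3)$. The implication $(2) \Rightarrow (3)$ is then obtained by the dual argument: if $F$ has enough injectives, then $F = F^{\CI(F)}$ by the injective version of Remark \ref{Lemma1-partone}, the subcategory $\CI(F)$ is functorially finite by the key fact, and the dual of the Moreover part of Theorem \ref{main-Theorem} yields that $F = F^{\CI(F)}$ has enough projectives and enough injectives.

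The argument is short and poses no serious obstacle; the only points demanding care are the verification of the key fact, namely that finite type makes \emph{every} subcategory functorially finite, and the correct bookkeeping of which direction of the restricted bijection of Theorem \ref{main-Theorem} (and of its injective dual) is being applied.
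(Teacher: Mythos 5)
Your proposal is correct and follows essentially the same route as the paper: the paper's proof likewise observes that finite type makes every subcategory of $\CM$ functorially finite (in particular $\CP(F)$ and $\CI(F)$) and then invokes Theorem \ref{main-Theorem} and its dual. You simply supply the details the paper leaves implicit, namely the approximation argument behind the key fact and the bookkeeping of the restricted bijection, both of which you carry out correctly.
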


\begin{proof}
As $\CM$ is of finite type, every subcategory is functorially finite. In particular, $\CP(F)$ and $\CI(F)$ are functorially finite in $\CM$. The result then follows from Theorem \ref{main-Theorem} and its dual.
\end{proof}

\section{Relative monomorphism categories}\label{Section 3}
The morphism category of $\CM$, denoted by $\CH(\CM)$, is a category whose objects are morphisms in $\CM$ and whose morphisms are given by commutative diagrams. We let $\CS(\CM)$, resp. $\CF(\CM)$, denote the subcategory of $\CH(\La)$ consisting of all monomorphisms, resp. epimorphisms, in $\CM$.

In this section, we first recall the functors $\Psi: \CS(\CM) \lrt \mmod\underline{\CM}$ and $\Psi':  \CF(\CM) \lrt \overline{\CM}\mbox{-}{\rm{mod}}$ from \cite{AHS} and then study their relative versions. Let us begin by recalling the notion of objective functors. 

\s \label{objective} An additive functor $F : \SX \lrt \SY$ between additive categories $\SX$ and $\SY$ is called objective if any morphism $f$ in $\SX$ with $F(f) = 0$ factors through an object $K$ with $F(K) = 0$. It is known \cite[Appendix]{RZ} that if $F: \SX \lrt \SY$ is full, dense and objective, it induces an equivalence $\overline{F}: \SX/ \SK  \lrt \SY$, where $\add\SK$ is the class of all kernel objects of $F$. The composition of objective functors is not necessarily objective but if, in addition, we know that they are both full and dense, then their composition is objective, full and dense.

\s {\sc The functor $\Psi: \CS(\CM) \lrt \mmod\underline{\CM}$. } \label{Psi}
Let $(M_1 \stackrel{f}\lrt M_2)$ be an object of $\CS(\CM)$. By taking an $n$-cokernel of $f$, we obtain the $n$-exact sequence
\[0 \lrt M_1 \st{f}{\lrt} M_2 {\lrt} M^1 {\lrt} M^2  \lrt \cdots \lrt M^{n-1} {\lrt} M^n \lrt 0 \]
in $\CM$. Hence the following induced sequence
\[0 \lrt \CM( - , M_1) \lrt \CM( - , M_2) \lrt  \cdots \lrt \CM( - , M^{n-1}) \lrt \CM( - , M^n) \lrt F \lrt 0 \]
is exact on $\CM$, where $F$ is the cokernel of the morphism $\CM( - , M^{n-1}) \lrt \CM( - , M^n)$. Clearly, $F$ vanishes on projective modules and so $F \in \mmod\underline{\CM}$.
We define a functor \[\Psi: \CS(\CM) \lrt \mmod\underline{\CM} \] by setting $\Psi(M_1 \stackrel{f}\lrt M_2)=F.$ To see the act of $\Psi$ on morphisms see \cite{AHS}, where it is also shown that $\Psi$ is a well-defined functor, independent of the choice of $n$-cokernels.

\begin{theorem}(\cite[Theorem 4.1 and Corollary 4.2]{AHS})\label{Th-Psi}
The functor $\Psi:\CS(\CM) \longrightarrow \mmod\underline{\CM}$ is full, dense, and objective. Furthermore, there exists an equivalence $\overline{\Psi}: {\CS(\CM)}/{\CV}\longrightarrow \mmod\underline{\CM}$ of categories that makes the diagram
\begin{equation*}
\begin{tikzcd}
\CS(\CM)\rar{\pi}\dar{\Psi} & {\CS(\CM)}/{\CV}\dlar{\overline{\Psi}}   \\
 \mmod\underline{\CM}&
\end{tikzcd}
\end{equation*}
commutative, where $\CV$ is the full subcategory of $\CS(\CM)$ generated by all finite direct sums of objects of the form $(M\st{1}\lrt M)$ and $(0\lrt M)$, where  $M$ runs over objects of $\CM$.
\end{theorem}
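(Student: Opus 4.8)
The plan is to verify the three properties asserted for $\Psi$---full, dense, and objective---and then read off the equivalence from the general principle recalled in \ref{objective}, after identifying the kernel objects of $\Psi$ with $\CV$. The single tool underlying everything is the observation that, for an object $(M_1 \st{f}\lrt M_2)$ of $\CS(\CM)$ with a chosen $n$-cokernel giving the $n$-exact sequence $0 \lrt M_1 \lrt M_2 \lrt M^1 \lrt \cdots \lrt M^n \lrt 0$, applying the Yoneda embedding $M \mapsto \CM( - , M)$ produces the exact sequence
\[0 \lrt \CM( - , M_1) \lrt \cdots \lrt \CM( - , M^n) \lrt F \lrt 0\]
in the abelian category $\mmod\CM$. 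Since the representable functors are projective in $\mmod\CM$, this is a finite projective resolution of $F = \Psi(M_1 \st{f}\lrt M_2)$. Thus $\Psi$ sends a monomorphism to the object resolved by its $n$-cokernel, and the whole argument becomes an exercise in comparing such resolutions; one only keeps in mind that $F$ vanishes on projectives and so is genuinely an object of $\mmod\underline{\CM}$, a full subcategory of $\mmod\CM$.

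For density I would start from $F \in \mmod\underline{\CM}$ and a presentation $\underline{\CM}( - , C) \st{\underline{h}}\lrt \underline{\CM}( - , D) \lrt F \lrt 0$, where $\underline{h}$ is the residue class of some $h \colon C \lrt D$ in $\CM$. Because every morphism into a projective vanishes in $\underline{\CM}$, one may replace $C$ by $C \oplus P$ and $h$ by an epimorphism $[h\ p] \colon C \oplus P \lrt D$ (with $P$ projective mapping onto $D$) without altering the induced map of representables over $\underline{\CM}$, hence without changing $F$. Completing this epimorphism to an $n$-exact sequence---possible since $\CM$ is $n$-abelian---yields a monomorphism $M_1 \lrt M_2$ at the front whose $n$-cokernel ends in $C \oplus P \lrt D$; the functor $\Coker(\CM( - , C \oplus P) \lrt \CM( - , D))$ vanishes on projectives and, viewed in $\mmod\underline{\CM}$, equals $\Coker(\underline{\CM}( - , C) \lrt \underline{\CM}( - , D)) = F$. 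Hence $\Psi$ is dense. For fullness, given $\phi \colon \Psi(X) \lrt \Psi(Y)$ with $X, Y \in \CS(\CM)$, I would regard $\phi$ as a morphism of $\CM$-modules and lift it, via the comparison theorem for projective resolutions in $\mmod\CM$, to a chain map $\CM( - , M^\bullet) \lrt \CM( - , N^\bullet)$. By the additive Yoneda lemma each component comes from a morphism $M^i \lrt N^i$ in $\CM$, and the chain-map relations are exactly the commuting squares of a morphism of the two $n$-exact sequences; its first two components give $\alpha \colon X \lrt Y$ in $\CS(\CM)$ with $\Psi(\alpha) = \phi$ by construction.

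The technical heart is objectivity. Suppose $\Psi(\alpha) = 0$ for $\alpha = (\alpha_1, \alpha_2) \colon (M_1 \st{f}\lrt M_2) \lrt (N_1 \st{f'}\lrt N_2)$. Completing $\alpha$ to a morphism of the two $n$-exact sequences gives a chain map of resolutions restricting to $\alpha$ at the front and lifting the zero map on cokernels; by uniqueness of lifts up to homotopy it is null-homotopic. Extracting the homotopy in the lowest two degrees produces morphisms $t \colon M_2 \lrt N_1$ and $r \colon M^1 \lrt N_2$ in $\CM$ with $\alpha_1 = t f$ and $\alpha_2 = f' t + r g$, where $g \colon M_2 \lrt M^1$ is the second map of the $n$-exact sequence of $X$ (so $g f = 0$). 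One then checks directly that $\alpha$ factors through the object $(M_2 \st{1}\lrt M_2) \oplus (0 \lrt N_2)$ of $\CV$: the part $(t f, f' t)$ passes through $(M_2 \st{1}\lrt M_2)$, while the residual $(0, r g)$ passes through $(0 \lrt N_2)$, using $r g f = r(g f) = 0$. This gives objectivity and shows $\CV$ consists of kernel objects.

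It remains to identify the kernel objects exactly. If $\Psi(K) = 0$, then the last map of the $n$-cokernel of $K$ is an epimorphism of representable functors, hence a split epimorphism, so by \cite[Proposition 2.6]{Ja} the defining $n$-exact sequence of $K$ splits; thus $K$ is a split monomorphism and therefore isomorphic to an object of $\CV$. Consequently $\add\CV$ is precisely the class of kernel objects of $\Psi$, and \ref{objective} delivers the equivalence $\overline{\Psi} \colon \CS(\CM)/\CV \lrt \mmod\underline{\CM}$ fitting into the stated commutative triangle. I expect the main obstacle to be the objectivity step---specifically producing the null-homotopy in $\mmod\CM$ and converting it into the explicit factorization through $\CV$---whereas density and fullness are fairly mechanical once the $n$-cokernel is recognized as a projective resolution.
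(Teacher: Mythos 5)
The paper itself does not prove this statement: Theorem \ref{Th-Psi} is imported verbatim from \cite[Theorem 4.1 and Corollary 4.2]{AHS}, so there is no internal proof to compare against. Judged on its own terms, your argument is correct and is the natural one: viewing the Yoneda image of the defining $n$-exact sequence as a finite projective resolution of $\Psi(X)$ in $\mmod\CM$ turns fullness into the comparison theorem, objectivity into uniqueness of lifts up to homotopy (and your explicit factorization $\alpha=(tf,f't)+(0,rg)$ through $(M_2\st{1}\lrt M_2)\oplus(0\lrt N_2)$ checks out, using $rgf=0$), and the identification of kernel objects into the split-epimorphism criterion of \cite[Proposition 2.6]{Ja}; the density argument via $[h\ p]\colon C\oplus P\lrt D$ with $\mathrm{im}(p_*)$ equal to the ideal of maps factoring through projectives is also sound. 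One point you should make explicit: when you complete the epimorphism $[h\ p]$ to an $n$-exact sequence using the $n$-abelian axioms, the first map of that sequence is a priori only a \emph{categorical} monomorphism in $\CM$, whereas objects of $\CS(\CM)$ are monomorphisms of modules. This is easily repaired --- since $\La\in\CM$, evaluating the $n$-kernel exactness condition at $\La$ shows the sequence is exact as a sequence of $\La$-modules, so the front map is injective --- but as written it is a gap in the density step. You also lean on the well-definedness of $\Psi$ on morphisms (independence of the chosen completion to a chain map), which is legitimate here since the paper takes exactly that from \cite{AHS} before stating the theorem.
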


\s {\sc The functor $\Psi':  \CF(\CM) \lrt \overline{\CM}\mbox{-}{\rm{mod}}$.} \label{Psi'}
Pick an epimorphism $(M_1 \st{f}\lrt M_2)$ in $\CF(\CM)$. By taking $n$-kernel in $\CM$ we get an $n$-exact sequence
\[0 \lrt M^1\st{d^1}\lrt M^2\st{d^2}\lrt  \cdots \lrt M^n \st{d^n}\lrt M_1 \st{f}\lrt M_2 \lrt 0.\]
This, in turn, induces the exact sequence
\[0 \lrt \CM(M_2, - ) \lrt \CM(M_1, - ) \lrt \cdots \lrt \CM(M^2, - ) \lrt \CM(M^1, - ) \lrt F \lrt 0.\]
Define $\Psi'(M_1 \st{f}\lrt M_2):=F.$ We refer the reader to \cite{AHS} for the details of the functor $\Psi'$.

\begin{theorem}(\cite[Theorem 6.3]{AHS})\label{duality-epi}
The functor $\Psi':\CF(\CM) \longrightarrow \overline{\CM}\mbox{-}\rm{mod}$ is full, dense, and objective. In particular, there exists a duality $\overline{\Psi'}: {\CF(\CM)}/{\CV'} \longrightarrow \overline{\CM}\mbox{-}{\rm mod}$ of categories that makes the diagram
\begin{equation*}
\begin{tikzcd}
\CF(\CM)\rar{\pi}\dar{\Psi'} & {\CF(\CM)}/{\CV'}\dlar{\overline{\Psi'}}   \\
\overline{\CM}\mbox{-}{\rm mod}&
\end{tikzcd}
\end{equation*}
commutative, where $\CV'$ is the full subcategory of $\CF(\CM)$ generated by all finite direct sums of objects of the form $(M\st{1}\lrt M)$ and $(M\lrt 0)$, where  $M$ runs over objects of $\CM$.
\end{theorem}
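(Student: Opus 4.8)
The plan is to deduce the statement from Theorem~\ref{Th-Psi} by transport along the duality $D=\Hom_R(-,E)$, using that $\Psi'$ is constructed from $n$-kernels in exactly the way that $\Psi$ is constructed from $n$-cokernels, and that $D$ interchanges the two. Note first that $\Psi'$ is a contravariant functor: a morphism in $\CF(\CM)$ from $(f\colon M_1\lrt M_2)$ to $(f'\colon M_1'\lrt M_2')$ lifts to a chain map of the chosen $n$-kernel sequences, and applying $\CM(-,-)$ in its first (contravariant) variable reverses the direction, producing $\Psi'(f')\lrt\Psi'(f)$. This is consistent with the assertion that $\overline{\Psi'}$ is a \emph{duality}.

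First I would record how $D$ acts on the relevant data. As $D\colon\mmod\La\lrt\mmod\La^{\op}$ is an exact duality interchanging projectives and injectives, its image $D\CM$ is again an $n$-cluster tilting subcategory, now of $\mmod\La^{\op}$: functorial finiteness is preserved, the generating--cogenerating condition is self-dual, and the orthogonality conditions $\CM^{\perp_n}=\CM={}^{\perp_n}\CM$ are exchanged with one another via the isomorphisms $\Ext^i_\La(A,B)\cong\Ext^i_{\La^{\op}}(DB,DA)$. Since $D$ turns epimorphisms into monomorphisms, it restricts to a duality $D\colon\CF(\CM)\lrt\CS(D\CM)$; and, interchanging projectives with injectives, it induces a duality $\underline{D\CM}\lrt\overline{\CM}$ on the stable and costable quotients. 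The latter gives an equivalence $\Theta\colon\mmod\underline{D\CM}\xrightarrow{\ \sim\ }\overline{\CM}\mbox{-}{\rm mod}$, since finitely presented contravariant functors on $\underline{D\CM}$ are identified with finitely presented covariant functors on $\overline{\CM}$.

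The heart of the argument is to produce a natural isomorphism $\Psi'\cong\Theta\circ\Psi_{D\CM}\circ D$, where $\Psi_{D\CM}\colon\CS(D\CM)\lrt\mmod\underline{D\CM}$ is the functor of \ref{Psi} for the $n$-cluster tilting subcategory $D\CM$. For an epimorphism $f\colon M_1\lrt M_2$ in $\CM$, applying the exact duality $D$ to a chosen $n$-kernel sequence of $f$ yields an $n$-cokernel sequence of the monomorphism $Df\colon DM_2\lrt DM_1$; feeding this into the construction of $\Psi_{D\CM}$ and transporting along $\Theta$ returns exactly the presentation $\cdots\lrt\CM(M^1,-)\lrt F\lrt 0$ that defines $\Psi'(f)$. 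Naturality in $f$ is then a diagram chase: a morphism in $\CF(\CM)$ lifts to a chain map of $n$-kernel sequences, and $D$ converts it into the corresponding chain map of $n$-cokernel sequences, matching the morphism prescribed by $\Psi_{D\CM}$.

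Granting this isomorphism, the conclusions are formal. The functor $\Theta$ is an equivalence and $D$ is a duality, so both are full, dense and objective, while $\Psi_{D\CM}$ is full, dense and objective by Theorem~\ref{Th-Psi}; by the composition principle recalled in \ref{objective} for full and dense objective functors, $\Psi'$ is full, dense and objective. The kernel objects of $\Psi_{D\CM}$ form $\CV_{D\CM}$, generated by the $(N\st{1}\lrt N)$ and $(0\lrt N)$ with $N\in D\CM$; since $D$ sends $(M\st{1}\lrt M)$ and $(M\lrt 0)$ to the generators $(DM\st{1}\lrt DM)$ and $(0\lrt DM)$ of $\CV_{D\CM}$, the class of kernel objects of $\Psi'$ is precisely $\CV'$. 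Hence $\Psi'$ induces an equivalence $\CF(\CM)/\CV'\lrt\overline{\CM}\mbox{-}{\rm mod}$ which, as $\Psi'$ is contravariant, is the desired duality $\overline{\Psi'}$. The main obstacle I anticipate is the naturality verification of the displayed isomorphism: one must follow the presentations through both the exact duality $D$ and the identification $\Theta$, check that the connecting maps $\CM(M^{i},-)\lrt\CM(M^{i-1},-)$ correspond on the nose, and confirm independence from the choice of $n$-kernel. Everything else is transport of structure along a duality together with an appeal to Theorem~\ref{Th-Psi}.
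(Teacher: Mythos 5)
Your proposal is correct, and it is essentially the same argument by which this statement is established: the paper itself gives no proof, quoting the result from \cite[Theorem 6.3]{AHS}, and that theorem is precisely the dual of Theorem \ref{Th-Psi}, obtained as you do by transporting structure along $D=\Hom_R(-,E)$ (so that $D\CM$ is an $n$-cluster tilting subcategory of $\mmod\La^{\op}$, $D$ identifies $\CF(\CM)$ with $\CS(D\CM)$ and carries $\CV'$ onto $\CV_{D\CM}$, and $\Psi'\cong\Theta\circ\Psi_{D\CM}\circ D$). The naturality and independence-of-choices verification you flag is routine here, since $\Psi$ is already known to be well defined independently of the chosen $n$-cokernels, so one may simply take the $n$-cokernel of $Df$ to be $D$ applied to the chosen $n$-kernel of $f$.
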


\s {\sc The relative versions of the functors $\Psi$ and $\Psi'$.} \label{PsiandPsi'}
To study the relative versions of the functors $\Psi$ and $\Psi'$ defined above, we need to introduce some definitions and notations.

\s {\bf Definitions and Notations.} Let $\CM$ be an $n$-cluster tilting subcategory of $\mmod\La$ and $F$ be a subfunctor of $\Ext^n_{\CM}( - , - )$.

$(i)$ We say that a functor $F \in \mmod\CM$ is presented by an $F$-epimorphism if there exists an $F$-exact sequence
\[0 \lrt M^0 \lrt M^1 \lrt \ldots \lrt M^n \lrt M^{n+1} \lrt 0,\]
such that $F=\Coker(\CM( - ,M^n) \lrt \CM( - ,M^{n+1})).$ Dually, we define functors that are co-presented by an $F$-monomorphism. Denote by $\mmodd\CM $, resp. $\CM\mbox{-}{\rm mod}_F$, the subcategory of $\mmod\CM$ consisting of all functors which are presented, resp. co-presented, by an $F$-epimorphism, resp. an $F$-monomorphism.

$(ii)$ Let $\underline{\CM}_F$  denote the additive quotient category $\CM/\CP(F)$ of $\CM$ with respect to $\CP(F)$.  Dually, $\overline{\CM}_F$  denotes the additive quotient category $\CM/\CI(F)$ of $\CM$ with respect to $\CI(F)$. Denote by $\mmod \underline{\CM}_F$, resp.
$\overline{\CM} _F\mbox{-}{\rm mod}$, the category of finitely presented right $\underline{\CM}_F$-modules, resp. left $\overline{\CM}_F$-modules.

\begin{remark}
The canonical functor $\pi:\CM \lrt \underline{\CM}_F$ induces a fully faithful functor
\[\mmod\underline{\CM}_F \lrt \mmod\CM.\]
Its essential image is the collection of all functors of $\mmod\CM$ that vanish on $\CP(F)$. Hence we identify $\mmod\underline{\CM}_F$ with this subcategory of $\mmod\CM$. Note that under this identification $\mmodd\CM \subseteq \mmod\underline{\CM}_F$.
\end{remark}

\begin{proposition}\label{Prop4.5-enoughproj}
Let $F$ be a subfunctor of $\Ext^n_{\CM}(-, -)$. If $F$ has enough projectives, then $\mmodd\CM =\mmod \underline{\CM}_F$.
\end{proposition}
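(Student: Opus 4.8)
The plan is to prove the two inclusions separately. The inclusion $\mmodd\CM \subseteq \mmod \underline{\CM}_F$ is already recorded in the Remark preceding the statement, so the entire content lies in the reverse inclusion $\mmod \underline{\CM}_F \subseteq \mmodd\CM$. Under the identification set up in that Remark, an object $G \in \mmod \underline{\CM}_F$ is precisely a finitely presented functor $G \in \mmod\CM$ with $G|_{\CP(F)} = 0$. Since $F$ is a subfunctor of $\Ext^n_{\CM}( - , - )$, every $\Ext^n_{\CM}$-projective is $F$-projective, and as $\CP(\La)=\CP(\Ext^n_{\CM})$ we get $\CP(\La) \subseteq \CP(F)$ via Corollary \ref{vanishing-proj-inj}. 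In particular $G$ also vanishes on $\CP(\La)$, so $G$ lies in $\mmod\underline{\CM}$ and the functor $\Psi$ of \ref{Psi} becomes available.

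First I would invoke the density of $\Psi : \CS(\CM) \lrt \mmod\underline{\CM}$ from Theorem \ref{Th-Psi} to write $G \cong \Psi(M_1 \st{f}{\lrt} M_2)$ for some monomorphism in $\CS(\CM)$. Unwinding the definition of $\Psi$, this means exactly that there is an $n$-exact sequence
\[ \xi \colon 0 \lrt N^0 \lrt N^1 \lrt \cdots \lrt N^n \st{d}{\lrt} N^{n+1} \lrt 0 \]
in $\CM$ (an $n$-cokernel completion of $f$, suitably reindexed) for which $G \cong \Coker\big(\CM( - , N^n) \st{\CM( - , d)}{\lrt} \CM( - , N^{n+1})\big)$. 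Thus $G$ already has the exact shape demanded by the definition of $\mmodd\CM$; the only point still to be secured is that the presenting sequence $\xi$ be $F$-exact, which a priori it need not be.

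The crux is therefore to recognize $\xi$ itself as an $F$-exact sequence, without altering its cokernel. Evaluating the presentation at $P \in \CP(F)$ yields the exact sequence $\CM(P, N^n) \lrt \CM(P, N^{n+1}) \lrt G(P) \lrt 0$, and since $G(P) = 0$ the map $\CM(P, d)$ is an epimorphism for every $P \in \CP(F)$. By the observation following Notation \ref{notation2.10} (applied with $\CN = \CP(F)$ and last map $f^n = d$), this says precisely that $\xi \in F_{\CP(F)}$. Because $F$ has enough projectives, Remark \ref{Lemma1-partone} gives $F = F_{\CP(F)}$, so $\xi$ is an $F$-exact sequence and $G$ is presented by the $F$-epimorphism $d$; as $\mmodd\CM$ is closed under isomorphism, $G \in \mmodd\CM$. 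I expect the main obstacle to be essentially bookkeeping: verifying that the reindexed $n$-exact sequence delivered by $\Psi$ genuinely presents $G$ through its \emph{last} map, so that the shape matches the definition of $\mmodd\CM$, and translating ``$G$ vanishes on $\CP(F)$'' into the surjectivity of $\CM(P,d)$ that identifies $\xi$ as a member of $F_{\CP(F)}$. Once the identity $F = F_{\CP(F)}$ is in hand, the conclusion is immediate.
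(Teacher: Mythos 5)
Your proof is correct, but it takes a genuinely different route from the paper's. The paper argues constructively: starting from a projective presentation $\underline{\CM}_F( - , X) \st{f_*}\lrt \underline{\CM}_F( - , Y) \lrt G \lrt 0$ induced by a morphism $f \colon X \lrt Y$, it uses enough projectives to produce an $F$-exact sequence ending in an $F$-epimorphism $h \colon P \lrt Y$ with $P \in \CP(F)$, and then takes the $n$-pullback of that sequence along $f$ to obtain an $F$-exact sequence ending in $[f~~h] \colon X \oplus P \lrt Y$, which presents $G$. You instead invoke the density of $\Psi$ (Theorem \ref{Th-Psi}) to realize $G$ as the defect of some $n$-exact sequence $\xi$, and then show that $\xi$ is \emph{automatically} $F$-exact because its defect vanishes on $\CP(F)$, via the observation following Notation \ref{notation2.10} combined with $F = F_{\CP(F)}$ from Remark \ref{Lemma1-partone}; equivalently, you could cite Proposition \ref{F-n-proj-proper}(1) directly at that point. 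Your route buys a cleaner conclusion: no new presentation has to be manufactured, and the point the paper leaves implicit---that replacing $f$ by $[f~~h]$ does not change the cokernel, which requires knowing that morphisms factoring through $\CP(F)$ lift along the $F$-epimorphism $h$---never arises. The costs are the extra (easy) step $\CP(\La) \subseteq \CP(F)$ needed to place $G$ in $\mmod\underline{\CM}$ before $\Psi$ can be applied, and the reliance on the density of $\Psi$, a nontrivial theorem imported from \cite{AHS}, whereas the paper's argument uses only the Section \ref{Section 2} machinery (enough projectives and closure of $F$ under $n$-pullbacks). Both are legitimate; note also that your argument is precisely the one that justifies the claim in \ref{Cdefect} that every functor in $\mmod\underline{\CM}_F$ is the contravariant defect of an $F$-exact sequence, which the paper later uses without proof.
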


\begin{proof}
In view of the above remark, for the proof we only need to show that $$\mmod\underline{\CM}_F \subseteq \mmodd\CM .$$ Let $G \in \mmod\underline{\CM}_F$ be a functor and $\underline{\CM}_F(-, X)\st{f_*} \lrt \underline{\CM}_F(-, Y)\lrt G \lrt 0$ be a projective presentation of it. The natural transformation $f_*$ is induced by the residue class of a morphism $f:X\lrt Y$ in $\CM$. By assumption, there is an $F$-epimorphism $h:P\lrt Y$, with $P \in \CP(F)$, that sits in an $F$-exact sequence $\xi$. Taking $n$-pull back of $\xi$ along morphism $f$ gives rise to the $F$-epimorphism $[f~~h]: X\oplus P\lrt Y$. This morphism induces a projective presentation of $G$ in $\mmodd\CM $, which means $G \in \mmodd\CM .$
\end{proof}

Dually we have the following statement. Its proof is similar to the proof of the above proposition and so we state it without proof.

\begin{proposition}\label{Prop4.6-enoughinjective}
Let $F$ be a subfunctor of $\Ext^n_{\CM}(-, -)$. If $F$ has enough injectives, then $\CM\mbox{-}{\rm mod}_F=\overline{\CM}_F\mbox{-}{\rm mod}$.
\end{proposition}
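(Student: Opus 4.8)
The plan is to dualize the proof of Proposition \ref{Prop4.5-enoughproj} throughout, replacing every notion by its opposite: right modules by left modules, $n$-epimorphisms by $n$-monomorphisms, projective presentations by injective co-presentations, $n$-pullbacks by $n$-pushouts, and $\CP(F)$ by $\CI(F)$. By the Remark preceding Proposition \ref{Prop4.5-enoughproj} (read dually), the canonical functor $\CM \lrt \overline{\CM}_F$ induces a fully faithful embedding of $\overline{\CM}_F\mbox{-}{\rm mod}$ into $\CM\mbox{-}{\rm mod}$ whose essential image consists of those left $\CM$-modules that vanish on $\CI(F)$; under this identification one has the inclusion $\CM\mbox{-}{\rm mod}_F \subseteq \overline{\CM}_F\mbox{-}{\rm mod}$ for free, since every functor co-presented by an $F$-monomorphism vanishes on $\CI(F)$ by Corollary \ref{vanishing-proj-inj}$(ii)$. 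So the only thing requiring proof is the reverse inclusion $\overline{\CM}_F\mbox{-}{\rm mod} \subseteq \CM\mbox{-}{\rm mod}_F$.

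First I would take an arbitrary $G \in \overline{\CM}_F\mbox{-}{\rm mod}$ and pick an injective co-presentation $0 \lrt G \lrt \overline{\CM}_F(Y, -) \st{f^*}\lrt \overline{\CM}_F(X, -)$, where the natural transformation $f^*$ is induced by the residue class of a morphism $f : X \lrt Y$ in $\CM$ (this is the exact mirror of choosing a projective presentation in the original proof; co-presentations in $\overline{\CM}_F\mbox{-}{\rm mod}$ correspond to representable functors in the left-module category). Next, since $F$ has enough injectives, there is an $F$-monomorphism $h : X \lrt I$ with $I \in \CI(F)$ sitting inside an $F$-exact sequence $\eta$. Taking the $n$-pushout of $\eta$ along $f$ then produces an $F$-monomorphism $\left[\begin{smallmatrix} f \\ h \end{smallmatrix}\right] : X \lrt Y \oplus I$, dually to how the $n$-pullback in Proposition \ref{Prop4.5-enoughproj} yields an $F$-epimorphism $X \oplus P \lrt Y$.

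The heart of the argument is then to verify that this $F$-monomorphism induces a co-presentation of $G$ realizing $G$ as co-presented by an $F$-monomorphism, i.e. that $G \in \CM\mbox{-}{\rm mod}_F$. The reason is that adjoining the summand $I \in \CI(F)$ does not change the associated left $\overline{\CM}_F$-module: since $\overline{\CM}_F(I, -) = 0$ by construction of the quotient $\overline{\CM}_F = \CM/\CI(F)$, the representable $\CM(Y \oplus I, -)$ and $\CM(Y, -)$ agree after passing to $\overline{\CM}_F$, so the co-presentation built from $\left[\begin{smallmatrix} f \\ h \end{smallmatrix}\right]$ presents the same $G$. The main obstacle I anticipate is purely bookkeeping rather than conceptual: one must confirm that the $n$-pushout of an $F$-exact sequence along $f$ is again $F$-exact (which is exactly the closure property from \ref{ASadeghi}) and that the resulting map is genuinely an $F$-monomorphism in the sense needed to define an object of $\CM\mbox{-}{\rm mod}_F$, tracking the $n$-kernel/$n$-cokernel data correctly through the pushout. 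Since all of these are the formal duals of steps already carried out in Proposition \ref{Prop4.5-enoughproj}, the proof is genuinely parallel, which is why the authors state it without proof.
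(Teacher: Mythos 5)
Your overall strategy---dualize the proof of Proposition \ref{Prop4.5-enoughproj} step by step---is exactly the paper's (the paper states this proposition without proof for precisely that reason), but your opening step mis-dualizes, and this is a genuine error rather than a slip of terminology. An object $G$ of $\overline{\CM}_F\mbox{-}{\rm mod}$ does \emph{not} in general admit an ``injective co-presentation'' $0 \lrt G \lrt \overline{\CM}_F(Y, - ) \st{f^*}\lrt \overline{\CM}_F(X, - )$: representable covariant functors are the \emph{projective} objects of the left-module category, not injective ones, and a finitely presented left module is by definition a \emph{cokernel} of a map between representables; it need not embed into any representable at all (already over an artin algebra, not every finitely generated module embeds in a projective). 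The duality relating Propositions \ref{Prop4.5-enoughproj} and \ref{Prop4.6-enoughinjective} acts on $\CM$---epimorphisms become monomorphisms, $\CP(F)$ becomes $\CI(F)$, $n$-pullbacks become $n$-pushouts---but not on the functor categories: in both proofs one starts from a \emph{projective} presentation, here $\overline{\CM}_F(Y, - ) \st{f^*}\lrt \overline{\CM}_F(X, - ) \lrt G \lrt 0$ with $f^*$ induced by $f\colon X \lrt Y$. As written, your argument cannot close: the functor co-presented by the $F$-monomorphism $\left[\begin{smallmatrix} f \\ h \end{smallmatrix}\right]\colon X \lrt Y \oplus I$ is by definition a cokernel, namely $\Coker(\CM(Y \oplus I, - ) \lrt \CM(X, - ))$, so it can never be identified with your $G = \Ker(f^*)$; indeed your concluding paragraph silently treats $G$ as a cokernel, contradicting your own setup.

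With that first step corrected, the remainder of your proposal is the right dual argument and coincides with the paper's intended proof: enough injectives yields an $F$-exact sequence $\eta$ beginning with $h\colon X \lrt I$, $I \in \CI(F)$; the $n$-pushout of $\eta$ along $f$ exhibits $\left[\begin{smallmatrix} f \\ h \end{smallmatrix}\right]\colon X \lrt Y \oplus I$ as an $F$-monomorphism; and the resulting co-presentation recovers $G$. One refinement on that last identification: your justification (``$\overline{\CM}_F(I, - )=0$, so pass to $\overline{\CM}_F$'') only compares cokernels computed over $\overline{\CM}_F$, whereas the co-presented functor is a cokernel computed over $\CM$. What makes the two agree is the extension property of $F$-injectives along the $F$-monomorphism $h$: every morphism out of $X$ factoring through an object of $\CI(F)$ factors through $h$, hence the image of $\CM(Y \oplus I, - ) \lrt \CM(X, - )$ equals the image of $f^*$ together with the entire ideal of maps factoring through $\CI(F)$, and the cokernel over $\CM$ is exactly $G$. (The paper's proof of Proposition \ref{Prop4.5-enoughproj} is equally terse at the corresponding point, so this is a sharpening, not a divergence from the paper.)
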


Let $F$ be an additive subfunctor of $\Ext^n_{\CM}( - , - )$. 
Let $\CS_F(\CM)$, resp. $\CF_F(\CM)$, denote the subcategory of $\CS(\La)$, resp. $\CF(\La)$, consisting of all $F$-monomorphisms, resp. all $F$-epimorphisms. Note that $\CS_F(\CM) =\CS(\CM)$, resp. $\CF_F(\CM) = \CF(\CM)$, when $F=\Ext^n(-, -)$.\\

It follows from definition that the restrictions of the functors $\Psi$ and $\Psi'$ to $\CS_F(\CM)$ and $\CF_F(\CM)$, respectively, give rise to the functors
\begin{itemize}
\item [-] $\Psi_F:\CS_F(\CM)\lrt \mmodd\CM$; and
\item [-] $\Psi'_F:\CF_F(\CM)\lrt \CM\mbox{-} {\rm mod}_F $
 %   \item [$-$] $\Theta_F:\CF_F(\CM)\lrt \mmod _F\CM$
\end{itemize}

\begin{theorem}\label{equivalece-three}
The functors $\Psi_F$ and $\Psi'_F$  are full, dense and objective. In particular, they induce the equivalences
\begin{itemize}
    \item [-] $\overline{\Psi}_F:\CS_F(\CM)/\CV\lrt \mmodd\CM$; and
    \item [-] $\overline{\Psi'}_F:\CF_F(\CM)/\CV'\lrt \CM\mbox{-} {\rm mod} _F $,
   % \item [$(3)$]  $\overline{\Theta}_F:\CF_F(\CM)/\CV'\lrt \mmod _F\CM$
\end{itemize}
where $\CV$, resp. $\CV'$, is the full subcategory of $\CS(\CM)$, resp. $\CF(\CM)$, defined in Theorem \ref{Th-Psi}, resp. Theorem \ref{duality-epi}.
\end{theorem}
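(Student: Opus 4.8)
The plan is to leverage the fact that the non-relative versions of these statements are already established in Theorems \ref{Th-Psi} and \ref{duality-epi}, and to show that restricting the functors $\Psi$ and $\Psi'$ to the relative subcategories $\CS_F(\CM)$ and $\CF_F(\CM)$ preserves the three key properties—fullness, density, and objectivity—with the target suitably restricted to $\mmodd\CM$ and $\CM\mbox{-}\rm{mod}_F$. I would treat only $\Psi_F$ in detail, since the argument for $\Psi'_F$ is entirely dual. Once fullness, density, and objectivity are in hand, the induced equivalence $\overline{\Psi}_F$ follows immediately from the general principle recalled in \ref{objective}, noting that the kernel objects of $\Psi_F$ are exactly those of $\Psi$ lying in $\CS_F(\CM)$, which are the objects of $\CV$.

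First I would pin down that $\Psi_F$ genuinely lands in $\mmodd\CM$. An object $(M_1 \st{f}\lrt M_2)$ of $\CS_F(\CM)$ is an $F$-monomorphism, so completing $f$ to an $n$-exact sequence produces an $F$-exact sequence, and by definition its cokernel functor $\Psi(f)$ is presented by an $F$-epimorphism; hence $\Psi(f) \in \mmodd\CM$. For density, I would start with $G \in \mmodd\CM$, take an $F$-exact sequence $0 \lrt M^0 \lrt \cdots \lrt M^{n+1} \lrt 0$ presenting $G$ as $\Coker(\CM(-,M^n)\lrt \CM(-,M^{n+1}))$, and observe that the monomorphism $M^0 \lrt M^1$ at the front of this sequence is an $F$-monomorphism whose image under $\Psi$ is precisely $G$; this object lies in $\CS_F(\CM)$, giving density.

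For fullness, I would exploit that $\Psi$ itself is full: given $F$-monomorphisms $\alpha, \beta$ and a morphism $G_\alpha \lrt G_\beta$ in $\mmodd\CM$, fullness of $\Psi$ produces a morphism $\alpha \lrt \beta$ in $\CS(\CM)$ lifting it, and since the source and target already lie in $\CS_F(\CM)$, this lift is automatically a morphism in $\CS_F(\CM)$. Objectivity is the step I expect to require the most care: I must show that any morphism $\phi$ in $\CS_F(\CM)$ with $\Psi_F(\phi)=0$ factors through an object of $\CS_F(\CM)$ on which $\Psi$ vanishes. Objectivity of the ambient $\Psi$ gives a factorization $\phi = \phi_2 \phi_1$ through some $K \in \CS(\CM)$ with $\Psi(K)=0$; the kernel objects of $\Psi$ are precisely direct summands of objects in $\CV$, which are of the form $(M\st{1}\lrt M)$ and $(0\lrt M)$, and these are split $F$-monomorphisms, hence visibly lie in $\CS_F(\CM)$. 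The main obstacle is verifying that this intermediate factoring object can be chosen inside $\CS_F(\CM)$ rather than merely inside $\CS(\CM)$; once one notes that the canonical kernel objects are split and therefore trivially $F$-exact, the factorization lands in the relative subcategory, and objectivity of $\Psi_F$ follows. The identical reasoning applied to $\Psi'$, using Theorem \ref{duality-epi} and Proposition \ref{Prop4.6-enoughinjective} in place of their $\Psi$-counterparts, handles $\Psi'_F$, and in both cases the equivalence is delivered by \ref{objective}.
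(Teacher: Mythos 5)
Your proposal is correct and follows essentially the same route as the paper: both arguments restrict $\Psi$ (resp. $\Psi'$) along the inclusion $\CS_F(\CM)\hookrightarrow\CS(\CM)$, deduce fullness and objectivity from Theorem \ref{Th-Psi} together with the key observation that the kernel objects of $\Psi$ (the split objects generating $\CV$) are $F$-exact and hence already lie in $\CS_F(\CM)$, obtain density directly from the definition of $\mmodd\CM$, and invoke \ref{objective} for the induced equivalence. The paper's proof is just a more compressed version of exactly this argument, phrased via a commutative diagram relating $\Psi_F$ and $\Psi$.
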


\begin{proof}
We will only prove the theorem for $\Psi_F$, as the argument for the proof for $\Psi'_F$ is similar. The connection between the functors $\Psi_F$  and $\Psi$ is depicted in the commutative diagram
\[\xymatrix{\CS_F(\CM) \ar@{=}[d]\ar@{^(->}[r]&\CS(\CM) \ar[r]^{\Psi}& \mmod \underline{\CM} \\\CS_F(\CM)\ar[rr]^{\Psi_F} &  & \mmodd\CM .\ar@{^(->}[u] }\]
Since split (contractible) complexes of length $n+2$ are $F$-exact, $\CV$ is a subcategory of $\CS_F(\CM)$. By Theorem \ref{Th-Psi}, $\Psi$ is full and objective, hence the above diagram implies that $\Psi_F$ is also full and objective. The density of $\Psi_F$ follows from the definition of $\mmodd\CM$. The fact that $\overline{\Psi}_F$ is an equivalence follows from the properties of the objective functors, see \ref{objective}.
\end{proof}

\s {\sc The functors ${\Phi_F}: \CE_F(\CM)  \lrt \mmodd \CM$ and $\Phi'_F: \CE_F(\CM) \lrt (\CM_F \mbox{-}{\rm mod})^{\rm op} .$}\label{Two functors PHI}

Let $\CE_F(\CM)$ be the category whose objects are $F$-exact sequences
\[0 \lrt M^0\lrt M^1\lrt \cdots \lrt M^n\lrt M^{n+1}\lrt 0 \]
in $\CM$ and whose morphisms are given by morphisms of complexes. We will drop $F$ in the symbol $\CE_F(\CM) $ whenever $F=\Ext^n_{\CM}(-, -)$. Denote by $\widetilde{\CE_F}(\CM)$ the homotopy category of $\CE_F(\CM)$, that is, the quotient category of $\CE_F(\CM)$ with
respect to the ideal of  null-homotopic morphisms. Note that two morphisms $f, g:{\xi} \lrt {\xi'} $ in $\CE_F(\CM)$ are homotopy equivalent if and only if $f-g:{\xi}\lrt {\xi'} $ factors through a direct sum of the contractible $n$-exact sequences of the type
\[A_i(M) \colon 0 \lrt 0 \lrt \cdots \lrt M \st{1}\lrt M \lrt \cdots \lrt 0 \lrt 0, \]
where $M \in \CM$ and the left hand $M$ is in the $i$th position, where $0 \leq i \leq n.$ Consequently, the homotopy category $\widetilde{\CE_F}(\CM)$ is equivalent to the additive quotient category $\CE_F(\CM)/\CW$, where $\CW$ is the the full subcategory of $\CE_F(\CM)$ generated by all finite direct sums of contractible $n$-exact sequences of the form $A_i(M)$, where $M$ runs over objects of $\CM$.

\s Let  $$\sigma_F: \CE_F(\CM)  \lrt \CS_F(\CM)$$
be the (hard) truncation functor, which is defined by sending an $F$-exact sequence ${\xi} \in \CE_F(\CM)$ to its first two terms, that is,
\begin{equation*}
\begin{tikzcd}
{\xi}\dar[mapsto] & 0 \rar & M^0 \rar \dar[equal] & M^1  \rar\dar[equal] & M^2 \rar  & \cdots \rar &M^n\rar & M^{n+1} \rar & 0   \\
\sigma_F {\xi} &  & M^0 \rar & M^1 &  &  & &   &
\end{tikzcd}
\end{equation*}

We define the functor ${\Phi_F}: \CE_F(\CM)  \lrt \mmodd \CM$ to be the composition of the functors
\[\Phi_F \colon \CE_F(\CM)\st{\sigma_F}\lrt \CS_F(\CM) \st{\Psi_F}\lrt \mmodd\CM.\]

\begin{theorem}\label{equiv-n-first}
The functor $\Phi_F:\CE_F(\CM)\lrt \mmodd\CM$ is full, dense and objective. In particular, it induces an equivalence $\widetilde{\Phi}_F: \widetilde{\CE_F}(\CM) \lrt \mmodd\CM$ that makes the diagram
\begin{equation*}
\begin{tikzcd}
\CE_F(\CM)\rar{\pi}\dar{\Phi_F} & \widetilde{\CE_F}(\CM)\dlar{\widetilde{\Phi}_F}   \\
 \mmodd \CM&
\end{tikzcd}
\end{equation*}
commutative.
\end{theorem}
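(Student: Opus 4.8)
The plan is to prove the three properties---fullness, density, objectivity---by exploiting the factorization $\Phi_F = \Psi_F \circ \sigma_F$ together with the already-established Theorem \ref{equivalece-three}, which tells us that $\Psi_F \colon \CS_F(\CM) \lrt \mmodd\CM$ is full, dense and objective. The key observation recorded in \ref{objective} is that a composition of full and dense objective functors is again full, dense and objective; so it suffices to prove that the truncation functor $\sigma_F \colon \CE_F(\CM) \lrt \CS_F(\CM)$ is full, dense and objective. Once this is in hand, the composition $\Phi_F$ inherits all three properties, and the induced equivalence $\widetilde{\Phi}_F$ on the homotopy category follows from the general principle in \ref{objective}, provided I identify the kernel objects of $\Phi_F$ correctly with the contractible sequences generating $\CW$.

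First I would treat density and fullness of $\sigma_F$. Density is essentially the content of the $n$-abelian structure on $\CM$: given any $F$-monomorphism $(M^0 \st{f^0}\lrt M^1)$ in $\CS_F(\CM)$, taking an $n$-cokernel of $f^0$ completes it to an $F$-exact sequence in $\CE_F(\CM)$ whose truncation is the given object, so $\sigma_F$ is dense. For fullness, the point is the weak-kernel/weak-cokernel lifting property: a morphism between the truncated pairs $(M^0 \to M^1) \to (N^0 \to N^1)$ consists of a commutative square, and I would extend it to a full chain map between the two $F$-exact sequences using the $n$-cokernel property---the universal mapping property of the induced sequences of $\Hom$-groups lets one inductively construct the remaining components $M^i \to N^i$. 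This is the standard comparison argument for $n$-exact sequences, and the $F$-exactness does not obstruct it because both sequences are genuine $n$-exact sequences in $\CM$.

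The objectivity of $\sigma_F$ is where I would focus the real effort, and I expect it to be the main obstacle. I must show that any chain map $\phi \colon \xi \lrt \xi'$ in $\CE_F(\CM)$ with $\sigma_F(\phi) = 0$ (i.e. the first two components $\phi^0, \phi^1$ vanish) factors through an object on which $\sigma_F$ vanishes---that is, through a finite direct sum of contractible sequences $A_i(M)$ with $i \geq 2$, since these are precisely the objects whose truncation (their first two terms) is zero. The strategy is a null-homotopy construction: because $\phi^0 = \phi^1 = 0$, the map $\phi$ restricted to the tail $M^2 \to \cdots \to M^{n+1}$ should be shown to be homotopic to zero relative to the truncation, and the homotopy is built by repeatedly using the $n$-cokernel exactness to split off the relevant factors. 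Concretely, I would produce homotopy maps $h^i \colon M^i \to N^{i-1}$ realizing $\phi$ on the tail as a boundary, and then reinterpret the null-homotopy data as a factorization through the contractible summands $A_i(N)$ or $A_i(M)$ for $i \geq 2$; these are exactly the kernel objects of $\sigma_F$ living inside the generators of $\CW$.

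Finally, having shown $\sigma_F$ is full, dense and objective, and recalling from Theorem \ref{equivalece-three} the same for $\Psi_F$, I conclude via \ref{objective} that $\Phi_F$ is full, dense and objective. To obtain the induced equivalence $\widetilde{\Phi}_F$, I would verify that $\add \CK$, the class of kernel objects of $\Phi_F$, coincides with $\CW$. Indeed the kernel objects of $\sigma_F$ are the contractible sequences $A_i(M)$ with $i \geq 2$, while $\Psi_F$ further kills the summands $A_0(M)$ and $A_1(M)$ coming from $\CV$; together these account for all the $A_i(M)$, $0 \leq i \leq n$, generating $\CW$. Thus $\CE_F(\CM)/\CW \simeq \widetilde{\CE_F}(\CM)$, and the general equivalence statement of \ref{objective} yields the desired $\widetilde{\Phi}_F \colon \widetilde{\CE_F}(\CM) \lrt \mmodd\CM$ making the stated triangle commute.
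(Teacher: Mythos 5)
Your proposal is correct and follows essentially the same route as the paper: factor $\Phi_F = \Psi_F \circ \sigma_F$, invoke Theorem \ref{equivalece-three} for $\Psi_F$, check that the truncation $\sigma_F$ is full, dense and objective, compose via \ref{objective}, and identify the kernel objects of $\Phi_F$ with the contractible sequences generating $\CW$. The only difference is that the paper dismisses the properties of $\sigma_F$ as following ``from the definition,'' whereas you supply the details (notably the null-homotopy argument for objectivity, which is indeed the right construction and works exactly as you describe).
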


\begin{proof}
It follows from the definition that $\sigma_F$ is full, dense and objective. On the other hand, by Theorem \ref{equivalece-three}, $\Psi_F$ is also full, dense and objective. Hence so is their composition, i.e., $\Phi_F$ is full, dense and objective. Now it is easy to see that the kernel objects of $\Phi_F$ are additively generated by the $F$-exact sequences of the type $A_i(M)$, where $M \in \CM$. Hence $\widetilde{\Phi}_F$ is an equivalence.  
\end{proof}

\s By the dual construction, we define $\sigma'_F:\CE_F(\CM)\lrt \CF_F(\CM)$ as follows
\begin{equation*}
\begin{tikzcd}
{\xi} \dar[mapsto] & 0 \rar & M^0 \rar & M^1\rar &M^2\rar &\cdots \rar & M^n \rar \dar[equal] & M^{n+1}\rar \dar[equal] & 0\\
\sigma'_F {\xi}  &  &  &  &   &  &M^n\rar & M^{n+1}  &    
\end{tikzcd}
\end{equation*}

In view of this, we define $\Phi'_F:\CE_F(\CM) \lrt \CM\mbox{-}{\rm mod}_F$ to be the composition of the following functors
$$\Phi'_F:\CE_F(\CM)\st{\sigma'_F}\lrt \CF_F(\CM) \st{\Psi'_F}\lrt \CM\mbox{-}{\rm mod}_F$$

\begin{theorem}\label{duality-n-exat2}
The functor $\Phi'_F:\CE_F(\CM)\lrt \CM\mbox{-}{\rm mod}_F$ is full, dense and objective. In particular, it induces a duality $\widetilde{\Phi'}_F$ satisfying the following commutative diagram

\begin{equation*}
\begin{tikzcd}
\CE_F(\CM)\rar{\pi}\dar{\Phi'_F} & \widetilde{\CE_F}(\CM) \dlar{\widetilde{\Phi'}_F}  \\
\CM\mbox{-}{\rm mod}_F &
\end{tikzcd}
\end{equation*}
\end{theorem}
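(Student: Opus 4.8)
The plan is to follow verbatim the strategy used for Theorem \ref{equiv-n-first}, exploiting that $\Phi'_F$ is by construction the composition $\CE_F(\CM)\st{\sigma'_F}\lrt \CF_F(\CM)\st{\Psi'_F}\lrt \CM\mbox{-}{\rm mod}_F$. Since being full, dense and objective is preserved under composition whenever both factors are full and dense (see \ref{objective}), it suffices to check these three properties for each of the two factors separately and then to identify the kernel objects of the composite. The only real difference from the proof of Theorem \ref{equiv-n-first} is that one factor is contravariant, so the outcome is a duality rather than an equivalence.

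First I would verify that the truncation functor $\sigma'_F$ is full, dense and objective. Density is immediate, since every $F$-epimorphism $M^n\lrt M^{n+1}$ extends to an $F$-exact sequence by taking an $n$-kernel in $\CM$; fullness follows because a morphism of $F$-epimorphisms lifts to a morphism of the associated $F$-exact sequences via the weak-kernel property, exactly as in the (dual) construction of $\sigma_F$; and objectivity follows from the dual of the argument establishing objectivity of $\sigma_F$. For the second factor, Theorem \ref{equivalece-three} already records that $\Psi'_F$ is full, dense and objective. Combining the two through \ref{objective}, $\Phi'_F$ is full, dense and objective, and hence induces an equivalence on the quotient of $\CE_F(\CM)$ by the additive closure of its kernel objects. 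Because $\Psi'_F$ is contravariant, being the relative version of the duality $\Psi'$ of Theorem \ref{duality-epi}, the induced functor $\widetilde{\Phi'}_F$ is a duality.

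The remaining point --- which I regard as the only genuinely computational step --- is to show that the additive closure of the kernel objects of $\Phi'_F$ is exactly $\CW$, the subcategory generated by the contractible sequences $A_i(M)$; this is what identifies the quotient with the homotopy category $\widetilde{\CE_F}(\CM)=\CE_F(\CM)/\CW$ and yields the stated commutative triangle. Unwinding the definitions, for $\xi\colon 0\lrt M^0\lrt\cdots\lrt M^n\lrt M^{n+1}\lrt 0$ one has $\Phi'_F(\xi)=\Psi'_F(M^n\lrt M^{n+1})=\Coker(\CM(M^1,-)\lrt\CM(M^0,-))$, which vanishes precisely when $M^0\lrt M^1$ is a split monomorphism, i.e.\ by \cite[Proposition 2.6]{Ja} precisely when $\xi$ is contractible; hence the kernel objects are exactly the contractible $F$-exact sequences, whose additive closure is $\CW$. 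The main obstacle is thus not conceptual but bookkeeping: carefully transporting each of the three properties of $\sigma_F$ and $\Psi_F$ to their primed counterparts and tracking the variance so that one correctly concludes a duality. Once that is done, the conclusion follows formally from \ref{objective} together with the above kernel computation.
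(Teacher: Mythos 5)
Your proposal is correct and follows essentially the same route as the paper: the paper proves Theorem \ref{duality-n-exat2} by dualizing the proof of Theorem \ref{equiv-n-first}, i.e.\ it factors $\Phi'_F$ as $\Psi'_F\circ\sigma'_F$, checks that each factor is full, dense and objective (the latter by Theorem \ref{equivalece-three}), invokes \ref{objective} for the composition, and identifies the kernel objects with the contractible sequences generating $\CW$. Your explicit computation that $\Phi'_F(\xi)=\Coker(\CM(M^1,-)\lrt\CM(M^0,-))$ vanishes exactly when $M^0\lrt M^1$ splits, hence exactly when $\xi$ is contractible by \cite[Proposition 2.6]{Ja}, is a correct spelling-out of the step the paper leaves implicit.
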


\begin{proof}
The proof is similar to the proof of Theorem \ref{equiv-n-first} so we omit it.
\end{proof}

The equivalences given in Theorems \ref{equivalece-three}, \ref{equiv-n-first} and \ref{duality-n-exat2} are related to each other as depicted in the following diagram
\begin{equation}\label{allfunctors}
\begin{tikzcd}
 &\CF_F(\CM)/\CV'\dlar{\overline{\Psi'}_F} &  &   \\
  (\CM\mbox{-}{\rm mod}_F)^{\rm op} &\widetilde{\CE_F}(\CM)\rar{\widetilde{\Phi}_F}\uar{\widetilde{\sigma'}_F}\dar{\widetilde{\sigma}_F} \lar{\widetilde{\Phi'}_F}& \mmodd\CM   &  \\
   & \CS_F(\CM)/\CV\urar{\overline{\Psi}_F} &  & \\
   & &  &
\end{tikzcd}
\end{equation}
where the equivalences $\widetilde{\sigma}_F$ and $\widetilde{\sigma'}_F$ are induced by the truncation functors $\sigma$ and $\sigma'$, respectively.

\begin{remark}
The equivalences presented in the middle column of the diagram \eqref{allfunctors} can be considered as higher analogs of the pair of inverse equivalences
\[{\rm Ker}:\CF(\La) \lrt \CS(\La) \ \ \text{and} \ \ {\rm Cok}:\CS(\La)\lrt \CF(\La)\]
which are the restrictions of the kernel and cokernel functors, see \cite[Lemma 1.2]{RS}.
\end{remark}

\section{Relative $n$-Auslander-Reiten duality}\label{Section 4}
In this section, we provide a relative Auslander-Reiten duality formula in the higher setting. 
To begin, in view of the Diagram \eqref{allfunctors}, we set $\Sigma_F$ to be the composition
\[\Sigma_F: \mmodd\CM \st{{\widetilde{\Phi}_F}^{-1}}\lrt  \widetilde{\CE_F}(\CM)\st{\widetilde{\Phi'}_F}\lrt \CM\mbox{-}{\rm mod}_F\]
of functors, which is a duality.

\begin{lemma}\label{Lemma-Sigma}
Let $M\in \CM$. The following statements hold.
\begin{itemize}
    \item [$(1)$] If $F$ has enough projectives, then  $\Sigma_F(\underline{\CM}_F( - , M))\simeq F(M, - ).$
    \item [$(2)$] If $F$ has enough injectives, then $\Sigma^{-1}_F(\overline{\CM}_F(M, - ))\simeq F( - , M).$
\end{itemize}
\end{lemma}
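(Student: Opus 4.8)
The plan is to prove statement $(1)$ by tracing the functor $\Sigma_F$ through its two constituent equivalences, unwinding what each does to the representable functor $\underline{\CM}_F(-,M)$. Statement $(2)$ will then follow by a dual argument, so I would only write out $(1)$ in detail.

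First I would identify a preimage of $\underline{\CM}_F(-,M)$ under the equivalence $\widetilde{\Phi}_F$. Recall that $\Phi_F = \Psi_F\circ\sigma_F$, so I need to find an $F$-exact sequence whose first two terms, after applying $\Psi_F$, give $\underline{\CM}_F(-,M)$. Since $F$ has enough projectives, Proposition \ref{Prop4.5-enoughproj} gives $\mmodd\CM=\mmod\underline{\CM}_F$, so the representable functor $\underline{\CM}_F(-,M)$ does lie in the target category. The key observation is that $\underline{\CM}_F(-,M)$ is the cokernel of $\CM(-,P)\lrt\CM(-,M)$ where $P\lrt M$ is an $F$-projective cover coming from an $F$-exact sequence
\[
\xi\colon 0\lrt M^1\lrt\cdots\lrt M^n\lrt P\lrt M\lrt 0
\]
with $P\in\CP(F)$. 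Indeed, applying $\Psi_F$ to the truncation $\sigma_F$ of the \emph{shifted} sequence realizes $\underline{\CM}_F(-,M)$ as the relevant cokernel; I would verify that this is exactly the image of $\xi$ under $\Phi_F$, so that $\widetilde{\Phi}_F^{-1}(\underline{\CM}_F(-,M))$ is the class of $\xi$ in $\widetilde{\CE_F}(\CM)$.

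Next I would apply $\widetilde{\Phi'}_F = \widetilde{\Psi'}_F\circ\widetilde{\sigma'}_F$ to the class of $\xi$. The functor $\sigma'_F$ truncates $\xi$ to its \emph{last} two terms, i.e. the $F$-epimorphism $P\lrt M$, and then $\Psi'_F$ sends this epimorphism to the cokernel of $\CM(M,-)\lrt\CM(P,-)$ computed from the $n$-kernel of $P\lrt M$ inside $\CM$. The crucial point is to recognize this cokernel: the second exact sequence of Lemma \ref{induced-exact-sequence}, applied to $\xi$, identifies precisely this cokernel with the image of the connecting map $\delta$ landing in $F(M,-)$. Because $P$ is $F$-projective, Corollary \ref{vanishing-proj-inj}$(i)$ forces $F(P,-)=0$, which makes $\delta$ surjective onto $F(M,-)$; combined with the exactness on the left coming from the $n$-kernel, I obtain $\Psi'_F(\sigma'_F\xi)\simeq F(M,-)$ as left $\CM$-modules, and this naturally vanishes on $\CP(F)$ so lands in $\CM\mbox{-}{\rm mod}_F$.

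I expect the main obstacle to be the careful bookkeeping of \emph{naturality} in $M$: it is not enough to check the isomorphism $\Sigma_F(\underline{\CM}_F(-,M))\simeq F(M,-)$ objectwise, since the statement asserts an isomorphism of functors (equivalently, the isomorphism must be natural in $M$). The representable $\underline{\CM}_F(-,M)$ and the sequence $\xi$ are not canonically associated to $M$ because the choice of $F$-projective presentation is only unique up to the relevant quotient, so I would need to argue that any morphism $M\lrt M'$ lifts to a morphism of the chosen $F$-exact sequences and that the induced map on the $F(M,-)$ side agrees with the functoriality of $F$ in its first variable. The cleanest way to handle this is to exploit that all the functors in Diagram \eqref{allfunctors} are genuine equivalences or dualities of categories, so $\Sigma_F$ is a duality and it suffices to check the asserted isomorphism on representables together with compatibility with morphisms between them; the Yoneda-type identification of $\delta$ with the first-variable action of $F$ then upgrades the objectwise isomorphism to a natural one.
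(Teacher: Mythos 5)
Your proposal is correct and follows essentially the same route as the paper's own proof: choose an $F$-exact sequence $\xi$ terminating in an $F$-projective presentation $P\lrt M$, observe that $\Phi_F(\xi)\simeq\underline{\CM}_F(-,M)$ so that $\Sigma_F(\underline{\CM}_F(-,M))=\widetilde{\Phi'}_F(\xi)$, and then identify $\widetilde{\Phi'}_F(\xi)$ with $F(M,-)$ via the connecting map together with the vanishing $F(P,-)=0$. One harmless slip: $\Psi'_F(P\lrt M)$ is the cokernel of $\CM(M^2,-)\lrt\CM(M^1,-)$ at the end of the sequence induced by the $n$-kernel, not the cokernel of $\CM(M,-)\lrt\CM(P,-)$; your subsequent identification of this cokernel with ${\rm Im}\,\delta$, and the surjectivity of $\delta$ onto $F(M,-)$ (which the paper justifies by an argument similar to \cite[Proposition 2.2]{JK}, and which your appeal to $F(P,-)=0$ implicitly relies on), is what actually carries the proof, so the argument goes through as in the paper.
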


\begin{proof}
We only prove the validity of the Statement $(1)$ as the proof for the Statement $(2)$ is similar. Since $F$ has enough projectives, by Proposition \ref{Prop4.5-enoughproj}, $\mmodd\CM =\mmod \underline{\CM}_F$. Moreover, there exists an $F$-exact sequence
\[\xi \colon 0 \lrt M^0 \lrt M^1 \lrt \cdots \lrt M^n \lrt P \lrt M \lrt 0\]
with $P \in \CP(F)$. This, in turn, induces the exact sequence
\[0 \lrt \CM( - , M^0) \lrt \CM( - , M^1) \lrt \cdots \lrt \CM( - , P) \lrt \CM( - , M) \lrt \underline{\CM}_F( - , M) \lrt 0\]
in $\mmod \CM$. Hence $\Sigma_F(\underline{\CM}_F(-, M))=\widetilde{\Phi'}_F\circ \widetilde{\Phi}^{-1}_F(\underline{\CM}_F(-, M))=\widetilde{\Phi'}_F(\xi).$
On the other hand, by an argument similar to \cite[Proposition 2.2]{JK}, we have the following exact sequence
\[0 \lrt  \CM(M, - )\lrt \CM(P, - )\lrt \cdots \lrt \CM(M^0, - ) \lrt F(M, - ) \lrt F(P, - ).\]
But $F(P, - )=0$, because $P \in \CP(F)$. Thus $\widetilde{\Phi'}_F(\xi)=F(M, - )$. This completes the proof.
\end{proof}

\begin{remark}
An additive $R$-linear essentially small category $\CX$ is called a dualizing $R$-variety if the functor $\Mod\CX \lrt \Mod\CX^{\op}$ taking $F$ to $DF$, induces a duality $\mmod\CX \lrt \mmod\CX^{\op}$. By \cite[Theorem 2.4]{AR1}, if $\CX$ is a dualising $R$-variety, then $\mmod\CX$ and $\mmod\CX^{\op}$ are abelian categories with enough projectives and injectives.

Since $\CM$ is an $n$-cluster tilting subcategory of $\mmod\La$, it is functorially finite. So by \cite[Theorem 2.3]{AS2}, it is a dualizing variety. That is, there exists a duality $D: \mmod\CM \lrt \CM \mbox{-}{\rm mod}$, which is defined by using the standard duality $D:\mmod\La\lrt \mmod\La^{\rm op}.$ It easily follows from the definition that we have the following restrictions
\[ \xymatrix{ \mmod \CM\ar[r]^{D} &  \CM \mbox{-} {\rm mod} \\
	\mmod \overline{\CM}\ar@{^(->}[u] \ar[r]^{D} & \overline{\CM}\mbox{-}{\rm mod} \ar@{^(->}[u]\\
		\mmod \overline{\CM}_F\ar@{^(->}[u] \ar[r]^{D} & \overline{\CM}_F\mbox{-}{\rm mod} \ar@{^(->}[u]}\]
where, by abuse of notation, we use the same notation `$D$' for all induced restrictions. The above diagram, in particular, implies that $\overline{\CM}$ and $\overline{\CM}_F$ are dualizing varieties.
\end{remark}

\begin{theorem}\label{Theroem-relative-tau}
Let $F$ be a subfunctor of $\Ext^n_{\CM}(-, -)$ with enough projectives and enough injectives. There exists a, unique up to isomorphism, equivalence $\tau_{ F}:\underline{\CM}_F\lrt \overline{\CM}_F$ such that for every $X, Y \in \CM$ induces the natural isomorphisms
\[D\underline{\CM}_F(\tau^{-1}_{ F}Y, X)\simeq F(X, Y)\simeq D\overline{\CM}_F(Y, \tau_{ F}X).\]
\end{theorem}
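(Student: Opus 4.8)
The plan is to realize $\tau_F$ as the object map of a covariant equivalence between the finitely presented module categories $\mmod\underline{\CM}_F$ and $\mmod\overline{\CM}_F$, obtained by composing the duality $\Sigma_F$ with the standard duality $D$ coming from the dualizing variety structure.

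First I would assemble the ambient categories. Since $F$ has enough projectives, Proposition \ref{Prop4.5-enoughproj} gives $\mmodd\CM = \mmod\underline{\CM}_F$, and since $F$ has enough injectives, Proposition \ref{Prop4.6-enoughinjective} gives $\CM\mbox{-}{\rm mod}_F = \overline{\CM}_F\mbox{-}{\rm mod}$. Hence the duality $\Sigma_F$ becomes a duality $\Sigma_F: \mmod\underline{\CM}_F \lrt \overline{\CM}_F\mbox{-}{\rm mod}$. By the remark preceding the statement, $\overline{\CM}_F$ is a dualizing variety, so there is the standard duality $D: \mmod\overline{\CM}_F \lrt \overline{\CM}_F\mbox{-}{\rm mod}$. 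I would then form the composite
\[\Theta := D^{-1}\circ\Sigma_F : \mmod\underline{\CM}_F \lrt \mmod\overline{\CM}_F,\]
which, being the composite of two dualities, is a covariant equivalence; and since a duality carries projectives to injectives and conversely, $\Theta$ carries projective objects to projective objects. Next I would extract $\tau_F$: in $\mmod\CC$ the projective objects are exactly the representable functors $\CC(-, C)$ (using that $\underline{\CM}_F$ and $\overline{\CM}_F$ are Krull–Schmidt, hence idempotent complete, so that summands of representables are representable), so $\Theta$ restricts to an equivalence between the full subcategories of projectives, which under the Yoneda embeddings are copies of $\underline{\CM}_F$ and $\overline{\CM}_F$. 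This produces an equivalence $\tau_F: \underline{\CM}_F \lrt \overline{\CM}_F$ characterized up to isomorphism by a natural isomorphism $\Theta(\underline{\CM}_F(-, X)) \cong \overline{\CM}_F(-, \tau_F X)$, and the faithfulness of Yoneda delivers the asserted uniqueness of $\tau_F$.

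It then remains to verify the two displayed formulas. For the right-hand isomorphism, Lemma \ref{Lemma-Sigma}$(1)$ gives $\Sigma_F(\underline{\CM}_F(-, X)) \simeq F(X, -)$, so $\Theta(\underline{\CM}_F(-, X)) \cong D^{-1}(F(X, -))$; comparing with the defining isomorphism of $\tau_F$ and applying $D$ yields $F(X, -) \cong D\,\overline{\CM}_F(-, \tau_F X)$ as left $\overline{\CM}_F$-modules, and evaluation at $Y$ gives $F(X, Y) \simeq D\overline{\CM}_F(Y, \tau_F X)$, naturally in $X$ and $Y$. For the left-hand isomorphism I would use only that $\tau_F$ is an equivalence: writing $Y \cong \tau_F\tau_F^{-1}Y$ and applying $\tau_F$ gives a natural isomorphism $\overline{\CM}_F(Y, \tau_F X) \cong \underline{\CM}_F(\tau_F^{-1}Y, X)$, and composing with $D$ and the right-hand isomorphism produces $F(X, Y) \simeq D\underline{\CM}_F(\tau_F^{-1}Y, X)$. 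One can also obtain this directly from Lemma \ref{Lemma-Sigma}$(2)$ by the symmetric computation, which serves as a consistency check.

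The main obstacle I anticipate is the bookkeeping of variances, namely keeping straight which functors are right versus left modules and verifying that each duality sends projectives to injectives in the correct category, together with the Morita-type step that an equivalence of finitely presented module categories restricts to an equivalence of the underlying additive categories through their projective objects; this requires the idempotent completeness (Krull–Schmidt property) of $\underline{\CM}_F$ and $\overline{\CM}_F$ so that the projectives are precisely the representables. The naturality of all the isomorphisms in $X$ and $Y$, which must be threaded through $\Sigma_F$, $D$, and the Yoneda identifications, is the remaining routine but delicate point.
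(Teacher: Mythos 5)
Your proposal is correct and follows essentially the same route as the paper: identify $\mmodd\CM=\mmod\underline{\CM}_F$ and $\CM\mbox{-}{\rm mod}_F=\overline{\CM}_F\mbox{-}{\rm mod}$ via Propositions \ref{Prop4.5-enoughproj} and \ref{Prop4.6-enoughinjective}, compose $\Sigma_F$ with the dualizing-variety duality $D$ to get a covariant equivalence $\mmod\underline{\CM}_F\lrt\mmod\overline{\CM}_F$, restrict to projectives through the Yoneda identifications to extract $\tau_F$, and use Lemma \ref{Lemma-Sigma}$(1)$ for the right-hand formula. The only (harmless) deviation is that you deduce the left-hand isomorphism formally from the right-hand one via $\overline{\CM}_F(Y,\tau_FX)\cong\underline{\CM}_F(\tau_F^{-1}Y,X)$, whereas the paper constructs an explicit quasi-inverse $\tau'_F$ from $\Sigma_F^{-1}\circ D$ restricted to injectives and invokes Lemma \ref{Lemma-Sigma}$(2)$.
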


\begin{proof}
By Propositions \ref{Prop4.5-enoughproj} and \ref{Prop4.6-enoughinjective}, we infer that
\[\mmodd\CM=\mmod \underline{\CM}_F  \ \ \ {\rm and} \ \ \ \CM\mbox{-}{\rm mod}_F=\overline{\CM}_F\mbox{-}{\rm mod}.\]

Consider the composition
\[D\circ \Sigma_F:\mmod \underline{\CM}_F\st{\Sigma_F}\lrt \overline{\CM}_F\mbox{-}{\rm mod}\st{D}\lrt \mmod \overline{\CM}_F \]
of functors. Since $\CM$ is closed under direct summands, $\underline{\CM}_F$ and $\overline{\CM}_F$ are idempotent complete. Hence Yoneda functor induces the equivalences
\[P_{(-)}: \underline{\CM}_F \simeq {\rm prj}\mbox{-}\mmod \underline{\CM}_F \ \ \ {\rm and} \ \ \ P^{(-)}: \overline{\CM}_F\simeq {\rm prj}\mbox{-}\mmod \overline{\CM}_F. \]
Since an equivalence between abelian categories can be restricted to their projective objects, we get the equivalence
\begin{equation}\label{eq1}
\tau_{F}:\underline{\CM}_F\st{P_{(-)}}\lrt {\rm prj}\mbox{-}\mmod \underline{\CM}_F\st{D\circ \Sigma_F|}\lrt {\rm prj}\mbox{-}\mmod \overline{\CM}_F \st{P^{{(-)}^{-1}}}\lrt \overline{\CM}_F 
\end{equation}
of functors. 

Since a duality sends a projective object to an injective one, the dualities $$D:\underline{\CM}_F\mbox{-}{\rm mod}\lrt \mmod \underline{\CM}_F  \ \ \ {\rm and} \ \ \ D:\overline{\CM}_F\mbox{-}{\rm mod}\lrt \mmod \overline{\CM}_F$$ 
give rise to the equivalences
\[\Theta:\underline{\CM}_F\st{P_{(-)}}\lrt {\rm prj}\mbox{-} \underline{\CM}_F\mbox{-}{\rm mod}\st{D}\lrt {\rm inj}\mbox{-}\mmod \underline{\CM}_F, \ X \mapsto D\underline{\CM}_F(X, -), \]
and
\[\Theta':\overline{\CM}_F\st{P^{(-)}}\lrt {\rm prj}\mbox{-} \overline{\CM}_F\mbox{-}{\rm mod}\st{D}\lrt {\rm inj}\mbox{-}\mmod \overline{\CM}_F, \ X \mapsto D\overline{\CM}_F(X, -). \]

The equivalence $\Sigma^{-1}_F\circ D$ is restricted to an equivalence between the subcategories of injective functors. This yields the equivalence 
\begin{equation}\label{eq2}
\tau'_{F}: \overline{\CM}_F\st{\Theta'}\lrt {\rm inj}\mbox{-}{\rm mod} \mbox{-}\overline{\CM}_F\st{\Sigma^{-1}_F\circ D}\lrt {\rm inj}\mbox{-}{\rm mod} \mbox{-}\underline{\CM}_F \st{\Theta^{-1}}\lrt  \underline{\CM}_F.
\end{equation}
We show that the functors $\tau_F$ and $\tau'_F$ are mutually inverse equivalences.
To this end, we prove that there are natural equivalences $\tau'_F\circ \tau_F\simeq {\rm id}_{\underline{\CM}_F} $ and $\tau_F\circ \tau'_F \simeq {\rm id}_{\overline{\CM}_F}.$ We only demonstrate the proof for the first natural equivalence, as the proof for the second one is similar. Let $X \in \CM$.  Applying the natural equivalences \eqref{eq1} and \eqref{eq2} imply that
\[D\underline{\CM}_F(\tau'_F \tau_F X, X)\simeq F(X, \tau_FX)\simeq D\overline{\CM}_F(\tau_FX, \tau_FX)\]
By applying the equivalence $\tau_F$ we get
\[\underline{\CM}_F(\tau'_F \tau_FX, X)\simeq \underline{\CM}_F(X, X)\]
Define the natural isomorphism $\tau''_F \tau'_FX\lrt X$ as the image of the identity ${\rm id}_{X}$ under this isomorphism.

It remains to show that we have the natural isomorphisms
\[D\underline{\CM}_F(\tau^{-1}_{ F}Y, X)\simeq F(X, Y)\simeq D\overline{\CM}_F(Y, \tau_{ F}X).\]
Let $X \in \CM$. The equality $\tau_F= P^{{(-)}^{-1}} \circ D\circ \Sigma_F \circ P_{(-)}$ implies that 
\[\overline{\CM}_F( - , \tau_FX) = D\circ \Sigma_F(\underline{\CM}_F( - ,X)).\]
This in view of Lemma \ref{Lemma-Sigma}, implies that 
\begin{equation}\label{eq11}
D\overline{\CM}_F( - , \tau_FX) =  \Sigma_F(\underline{\CM}_F( - ,X)) \simeq F(X, - ).
\end{equation}

On the other hand, let $Y \in \CM$. The equality 
\[\tau'_{F}= \Theta^{-1} \circ {\Sigma^{-1}\circ D} \circ \Theta'\] 
implies that 
\[\Theta\tau'_F(Y, - ) = \Sigma^{-1}(\overline{\CM}_F(Y, - )),\]
that, in view of Lemma \ref{Lemma-Sigma} and the fact that $\tau'_F=\tau^{-1}_F$, implies that 
\begin{equation}\label{eq22}
D\underline{\CM}_F(\tau^{-1}_{ F}Y, - ) = \Sigma^{-1}(\overline{\CM}_F(Y, - )) \simeq F( - ,Y).
\end{equation}

Therefore the proof can be completed by comparing the equations \eqref{eq11} and \eqref{eq22}. 
\end{proof}

\begin{remark}
It is natural to use the notation $\tau_{n, F}$ for the relative $n$-Auslander-Reiten translation, but since $n$ is fixed, following our convention, we omit it.
\end{remark}

Theorem \ref{Theroem-relative-tau}, in view of \cite[Theorem 3.5]{HHZ}, implies the following result.

\begin{corollary}\label{exsitenceAlmosSplit}
Let $F$ be a subfunctor of $\Ext^n_{\Lambda}(-,-)$ with enough projective and enough injective objects. The following statements are true.  
\begin{itemize}
\item[$(1)$] For every indecomposable module $X$ not belonging to $\CP(F)$, there exists an $F$-almost split sequence
\begin{equation}
0 \lrt M^0 \lrt M^1 \lrt \cdots \lrt M^n \lrt X \lrt 0 \notag
\end{equation}
in $\CM$.
\item[$(2)$] For every indecomposable module $Y$ not belonging to $\CI(F)$, there exists an $F$-almost split sequence
\begin{equation}
0 \lrt Y \lrt M^1 \lrt M^2 \lrt \cdots \lrt M^n \lrt M^{n+1} \lrt 0 \notag
\end{equation}
in $\CM$.
\end{itemize}
\end{corollary}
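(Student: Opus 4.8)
The plan is to deduce Corollary~\ref{exsitenceAlmosSplit} directly from the relative $n$-Auslander--Reiten duality formula established in Theorem~\ref{Theroem-relative-tau}, by invoking the general existence criterion for $n$-almost split sequences given in \cite[Theorem 3.5]{HHZ}. I will only write out Statement~$(1)$, since $(2)$ is dual. The key observation is that the three pieces of data required to manufacture an $n$-almost split sequence ending at an indecomposable $X \notin \CP(F)$ are precisely what Theorem~\ref{Theroem-relative-tau} supplies: the relative translation functor $\tau_F \colon \underline{\CM}_F \lrt \overline{\CM}_F$, an indecomposable object $\tau_F X$ to serve as the left-hand term, and a natural isomorphism $F(X, -) \simeq D\,\overline{\CM}_F(-, \tau_F X)$ realizing the duality.

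First I would fix an indecomposable $X \in \CM$ with $X \notin \CP(F)$. By Corollary~\ref{vanishing-proj-inj}$(i)$, the condition $X \notin \CP(F)$ means there exists some $M \in \CM$ with $F(X, M) \neq 0$, so the relative extension functor $F(X, -)$ is nonzero. Next I would record, from Theorem~\ref{Theroem-relative-tau}, the natural isomorphism
\[
F(X, Y) \simeq D\,\overline{\CM}_F(Y, \tau_F X)
\]
for all $Y \in \CM$, together with the fact that $\tau_F$ is an equivalence $\underline{\CM}_F \lrt \overline{\CM}_F$. Since $\tau_F$ is an equivalence and $X$ is indecomposable in $\underline{\CM}_F$ (being indecomposable and non-projective, it is a nonzero indecomposable object of the quotient), its image $\tau_F X$ is indecomposable in $\overline{\CM}_F$, hence can be represented by an indecomposable, non-$F$-injective module in $\CM$.

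Then the heart of the argument is to feed these data into \cite[Theorem 3.5]{HHZ}. That theorem gives a general recipe: whenever one has a relative theory on an $n$-cluster tilting (or $n$-exangulated) category in which the relative $\Ext$ functor $F(X,-)$ is represented, through the duality $D$, by a Hom-functor $\overline{\CM}_F(-, \tau_F X)$ on the relative stable category, there is an $n$-almost split sequence ending at $X$ whose left-hand term is $\tau_F X$. Concretely, I would invoke the criterion to produce an $F$-exact sequence
\[
0 \lrt \tau_F X \lrt M^1 \lrt \cdots \lrt M^n \lrt X \lrt 0
\]
and then check that it is $n$-almost split, i.e. that the Hom-sequence $\CM(M, -)$ applied to it is exact with the radical condition on the last map, exactly as in \ref{almost split}. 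This exactness and the radical property are encoded in the representability datum, so the verification is the content of \cite[Theorem 3.5]{HHZ} rather than something to be redone by hand.

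The main obstacle I anticipate is a bookkeeping one rather than a conceptual one: I must verify that the hypotheses of \cite[Theorem 3.5]{HHZ} are genuinely met in our relative setting---in particular that the relative stable category $\overline{\CM}_F$ is the correct category on which the cited existence theorem operates, and that the natural isomorphism of Theorem~\ref{Theroem-relative-tau} is compatible (naturally in both variables) with the pairing required there. One must also confirm that passing between the categories $\underline{\CM}_F$ and $\overline{\CM}_F$ via $\tau_F$ does not lose indecomposability or introduce spurious projective/injective summands, which is where the hypothesis that $F$ has \emph{both} enough projectives and enough injectives (ensuring $\mmodd\CM = \mmod\underline{\CM}_F$ and $\CM\mbox{-}{\rm mod}_F = \overline{\CM}_F\mbox{-}{\rm mod}$ via Propositions~\ref{Prop4.5-enoughproj} and~\ref{Prop4.6-enoughinjective}) does the essential work. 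Once these identifications are in place, the existence of the $F$-almost split sequence follows formally.
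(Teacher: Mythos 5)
Your proposal follows exactly the paper's route: the paper derives this corollary in one line by combining the relative duality isomorphisms of Theorem~\ref{Theroem-relative-tau} with the existence criterion of \cite[Theorem 3.5]{HHZ}, which is precisely the deduction you spell out (including the observation that $X \notin \CP(F)$ forces $F(X,-) \neq 0$ and that $\tau_F X$ serves as the left-hand term). Your additional verification points are reasonable bookkeeping, but conceptually you have reproduced the paper's argument.
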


\begin{remark}\label{left adjoint}
Let $F\subseteq G$ be subfunctors of $\Ext^n_{\CM}(-, -)$. Assume that $F$ has enough projectives. It is plain that $\CP(G) \subseteq \CP(F)$. This in view of the equivalence $\underline{\CM}_F=\CM/\CP(F)\simeq \frac{\CM/\CP(G)}{\CP(F)/\CP(G)}$, implies that there is a sequence of functors
\[\CP(F)/\CP(G) \hookrightarrow \underline{\CM}_G \st{\pi}\lrt  \underline{\CM}_F.\]
The quotient functor $\pi$ induces an embedding $\ell: \mmod\underline{\CM}_F \hookrightarrow \mmod\underline{\CM}_G$. Since $F$ has enough projectives, $\CP(F) $ is a contravariantly finite subcategory of $\CM$. This implies immediately that $\CP(F)/\CP(G)$ is a contravariantly finite subcategory in the quotient category $\underline{\CM}_G$. Moreover, it can be seen that there exists a left adjoint $\pi_{!}:\mmod \underline{\CM}_G\lrt \mmod \underline{\CM}_F$ to the embedding $\ell$ which is given by sending $H$ to $\pi_{!}H$, where $\pi_{!}H$ is computed as follows
\[\underline{\CM}_G( - , X)\lrt \underline{\CM}_G( - , Y)\lrt H\lrt 0\]
\[\underline{\CM}_F( - , X)\lrt \underline{\CM}_F( - , Y)\lrt \pi_{!}H\lrt 0.\]
There is a similar treatment for the following sequence of functors
\[ \CI(F)/\CI(G) \hookrightarrow \overline{\CM}_G \st{\pi'} \lrt \overline{\CM}_F.\]
Therefore, there exists the left adjoint functor 
$$\pi'_{!}:\mmod \overline{\CM}_G \lrt \mmod \overline{\CM}_F.$$
\end{remark}

\begin{proposition}
Let $F\subseteq G$ be subfunctors of $\Ext^n_{\CM}(-, -)$. Assume that $F$ and $G$ have enough projectives and enough injectives. Then there is the diagram
\[ \xymatrix{ \underline{\CM}_F\ar[r]^{\tau_F} &  \overline{\mathcal{M}}_F \\
    \underline{\CM}_G	\ar[u]^{\pi} \ar[r]^{\tau_G} & \overline{\CM}_G \ar[u]_{\pi'}}\]
of functors, which is commutative up to natural equivalence of functors.
\end{proposition}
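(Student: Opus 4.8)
The plan is to reduce the claimed commutativity to a natural isomorphism of representable functors and then transport it through the dualities $\Sigma_F,\Sigma_G$ and the adjoints $\pi_!,\pi'_!$. First I would record that $\pi\colon\underline{\CM}_G\to\underline{\CM}_F$ and $\pi'\colon\overline{\CM}_G\to\overline{\CM}_F$ are the identity on objects, so that $\tau_F\pi X=\tau_F X$ while $\pi'\tau_G X$ is the object $\tau_G X$ regarded in $\overline{\CM}_F$; hence, by the Yoneda embedding $P^{(-)}\colon\overline{\CM}_F\hookrightarrow\mmod\overline{\CM}_F$ (full and faithful since $\overline{\CM}_F$ is idempotent complete), it suffices to produce an isomorphism
\[\overline{\CM}_F(-,\pi'\tau_G X)\ \cong\ \overline{\CM}_F(-,\tau_F X)\qquad\text{in }\mmod\overline{\CM}_F,\]
natural in $X\in\underline{\CM}_G$.

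The crucial device is to replace the contravariant $\Sigma_F$ by the \emph{covariant} equivalence $T_F:=D\circ\Sigma_F\colon\mmod\underline{\CM}_F\to\mmod\overline{\CM}_F$, and likewise $T_G:=D\circ\Sigma_G$. By Lemma \ref{Lemma-Sigma} and the identity \eqref{eq11}, $T_F$ sends the representable $\underline{\CM}_F(-,X)$ to $\overline{\CM}_F(-,\tau_F X)$, and similarly for $T_G$. I would then verify two compatibilities. First, the embeddings $\ell\colon\mmod\underline{\CM}_F\hookrightarrow\mmod\underline{\CM}_G$ and $\ell'\colon\overline{\CM}_F\mbox{-}{\rm mod}\hookrightarrow\overline{\CM}_G\mbox{-}{\rm mod}$ satisfy $\Sigma_G\circ\ell\cong\ell'\circ\Sigma_F$: for an $F$-exact sequence $\xi$ the functor $\Phi_F(\xi)=\Psi_F\sigma_F(\xi)$ depends only on $\xi$ as an $n$-exact sequence and not on the chosen subfunctor, so $\widetilde\Phi_G$ and $\widetilde{\Phi'}_G$ restrict along $\widetilde{\CE_F}(\CM)\hookrightarrow\widetilde{\CE_G}(\CM)$ to $\widetilde\Phi_F$ and $\widetilde{\Phi'}_F$; combining this with the fact that $D$ commutes with the restriction functors (the dualizing-variety diagram) yields $T_G\circ\ell\cong\ell''\circ T_F$, where $\ell''\colon\mmod\overline{\CM}_F\hookrightarrow\mmod\overline{\CM}_G$. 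Second, the left adjoints of Remark \ref{left adjoint} carry representables to representables, $\pi_!\,\underline{\CM}_G(-,X)\cong\underline{\CM}_F(-,X)$ and $\pi'_!\,\overline{\CM}_G(-,M)\cong\overline{\CM}_F(-,M)$, which is immediate from $\pi_!\dashv\ell$, $\pi'_!\dashv\ell''$ and the Yoneda lemma.

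With these in hand the square $T_F\circ\pi_!\cong\pi'_!\circ T_G$ follows formally: for $w\in\mmod\overline{\CM}_F$ the chain
\[\Hom(\pi'_! T_G A,w)\cong\Hom(T_G A,\ell'' w)\cong\Hom(A,T_G^{-1}\ell'' w)\cong\Hom(A,\ell\,T_F^{-1}w)\cong\Hom(T_F\pi_! A,w)\]
uses only $\pi'_!\dashv\ell''$, the equivalences $T_F,T_G$, compatibility one in the form $T_G^{-1}\ell''\cong\ell\,T_F^{-1}$, and $\pi_!\dashv\ell$. Evaluating the square on $\underline{\CM}_G(-,X)$ and using the second compatibility gives
\[\overline{\CM}_F(-,\tau_F X)=T_F\underline{\CM}_F(-,X)\cong T_F\pi_!\underline{\CM}_G(-,X)\cong\pi'_! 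T_G\underline{\CM}_G(-,X)=\pi'_!\,\overline{\CM}_G(-,\tau_G X)\cong\overline{\CM}_F(-,\pi'\tau_G X),\]
natural in $X$, and the reduction of the first paragraph finishes the proof.

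I expect the main obstacle to be the first compatibility: verifying that $\Sigma$ is genuinely natural in the subfunctor, that is, that the truncation--cokernel recipe defining $\widetilde\Phi_F,\widetilde{\Phi'}_F$ is independent of $F$ on $F$-exact sequences and interacts correctly with the fully faithful embeddings and with $D$. A secondary subtlety, easy to mishandle, is the variance bookkeeping in the formal step: the naive square $\Sigma_F\pi_!\cong\pi'_!\Sigma_G$ is \emph{false}, since a contravariant $\Sigma$ turns the left adjoint $\pi_!$ into (the universal property of) a right adjoint; it is exactly the passage to the covariant $T=D\Sigma$ that places both left adjoints $\pi_!,\pi'_!$ on the correct sides and makes the comparison go through.
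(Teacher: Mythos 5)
Your proposal is correct and follows essentially the same route as the paper's proof: you establish the same compatibility square between the embeddings and the covariant equivalences $D\circ\Sigma_F$, $D\circ\Sigma_G$, obtain $T_F\circ\pi_!\simeq\pi'_!\circ T_G$ by what amounts to uniqueness of left adjoints (your explicit $\Hom$-chain is just that argument unpacked), and then restrict to representables via Yoneda, exactly as the paper restricts to projective functors. Your closing remark on the variance issue --- that one must pass to $D\Sigma$ rather than work with the contravariant $\Sigma$ directly --- is precisely the point implicit in the paper's use of $D\circ\Sigma_F$ throughout.
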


\begin{proof}
By definitions of $\Sigma_F$ and $\Sigma_G$ and using the fact that every $F$-exact sequence is $G$-exact, we can easily verify the commutativity of the diagram
\[\xymatrix{\mmod \underline{\CM}_G \ar[rr]^{D\circ \Sigma_G} && \mmod \overline{\CM}_G \\ \mmod \underline{\CM}_F \ar[rr]^{D\circ \Sigma_F}\ar@{^(->}[u] & & \mmod \overline{\CM}_F\ar@{^(->}[u] }\]
By Remark \ref{left adjoint}, we have the left adjoints $\pi_{!}$ and $\pi'_{!}$ to the embeddings $\mmod\underline{\CM}_F \hookrightarrow \mmod \underline{\CM}_G$ and $\mmod\overline{\CM}_F \hookrightarrow \mmod\overline{\CM}_G$, respectively. According to the equivalences $D\circ \Sigma_F$ and $D\circ \Sigma_G$ and the above commutative diagram, we can deduce that the embedding $\mmod \underline{\CM}_F \hookrightarrow \mmod \underline{\CM}_G$ has another left adjoint, which is $(D\circ \Sigma_F)^{-1}\circ \pi'_{!} \circ (D\circ \Sigma_G)$. Because of the uniqueness of the left adjoints, there is a unique natural isomorphism 
$$(D\circ \Sigma_F)^{-1}\circ \pi'_{!} \circ (D\circ \Sigma_G) \simeq \pi_{!}.$$ 
This leads to  the following diagram which commutes up to the isomorphism of functors
\[\xymatrix{\mmod \underline{\CM}_F \ar[rr]^{D\circ \Sigma_F}&& \mmod \overline{\CM}_F \\ \mmod \underline{\CM}_G \ar[rr]^{D\circ \Sigma_G}\ar[u]_{\pi_{!}}&  & \mmod \overline{\CM}_G.\ar[u]_{\pi'_{!}} }\]
Since every left adjoint functor preserves projective objects and trivially equivalences, the above diagram restricts to the subcategories of projective functors of abelian categories involved in the diagram. Next using the equivalences between the subcategories of projective functors and the corresponding additive categories, we obtain the desired commutative diagram.
\end{proof}

The following is an immediate consequence of the above proposition which establishes a nice connection between the relative $n$-Aulsander-Reiten translation and the classical one.

\begin{corollary}
Assume that subfunctor $F \subseteq \Ext^n_{\CM}( - , - )$  has enough projectives and enough injectives.
Then there is a diagram
 \[ \xymatrix{ \underline{\CM}_F\ar[r]^{\tau_F} &  \overline{\mathcal{M}}_F \\
	\underline{\CM}	\ar[u]^{\pi} \ar[r]^{\tau_n} & \overline{\CM} \ar[u]_{\pi'}}\]
which is commutative, up to isomorphism,
\end{corollary}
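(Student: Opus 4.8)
The plan is to obtain this square as the special case $G=\Ext^n_{\CM}(-,-)$ of the preceding proposition. With $G$ the maximal subfunctor the inclusion $F\subseteq G$ is automatic, and the quotient categories attached to $G$ collapse to the classical ones: since $\CP(G)=\CP(\La)$ and $\CI(G)=\CI(\La)$ (as recorded just after the definition of $\CP(F)$ and $\CI(F)$), one has $\underline{\CM}_G=\CM/\CP(\La)=\underline{\CM}$ and $\overline{\CM}_G=\CM/\CI(\La)=\overline{\CM}$, and the two vertical functors of the proposition become exactly the canonical quotients $\pi$ and $\pi'$ of the corollary. Thus, once the proposition applies, its commutative square reads $\tau_F\circ\pi\simeq\pi'\circ\tau_G$, and it remains only to identify $\tau_G$ with $\tau_n$.

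Before invoking the proposition I would verify its hypothesis for $G$, namely that $\Ext^n_{\CM}(-,-)$ has enough projectives and enough injectives. For enough projectives I would take, for each $M\in\CM$, a projective cover $P\lrt M$ in $\mmod\La$ with $P\in\CP(\La)$ and complete it, via the $n$-abelian structure of $\CM$, to an $n$-exact sequence $0\lrt M^1\lrt\cdots\lrt M^n\lrt P\lrt M\lrt 0$; every such sequence is $G$-exact and $P$ is $G$-projective because $\CP(G)=\CP(\La)$. Enough injectives follows dually from injective envelopes together with $\CI(G)=\CI(\La)$.

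The essential step, and the one I expect to carry the real content, is the natural isomorphism $\tau_G\simeq\tau_n$. By Theorem \ref{Theroem-relative-tau} applied to $G$, the equivalence $\tau_G\colon\underline{\CM}\lrt\overline{\CM}$ is characterized, uniquely up to isomorphism, by the natural isomorphism $D\overline{\CM}(Y,\tau_G X)\simeq G(X,Y)=\Ext^n_{\CM}(X,Y)$. On the other hand, Iyama's classical higher Auslander--Reiten duality \cite[Theorem 2.3.1]{I1}, combined with the fact that $n$-fold extensions computed inside the $n$-cluster tilting subcategory agree with $\Ext^n_\La$, supplies precisely the natural isomorphism $D\overline{\CM}(Y,\tau_n X)\simeq\Ext^n_{\CM}(X,Y)$. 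Hence $\tau_n$ satisfies the defining property of $\tau_G$, and the uniqueness clause of Theorem \ref{Theroem-relative-tau} forces $\tau_G\simeq\tau_n$. Substituting this identification into the preceding proposition then yields $\tau_F\circ\pi\simeq\pi'\circ\tau_n$, which is exactly the asserted commutativity. The only delicate point is this last matching of defining formulas; everything else is bookkeeping about the degenerate case $G=\Ext^n_{\CM}(-,-)$.
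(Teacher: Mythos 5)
Your proposal is correct and coincides with the paper's own proof, which is literally the single line ``in the above proposition, set $G=\Ext^n_{\CM}(-,-)$.'' The additional details you supply --- verifying that $\Ext^n_{\CM}(-,-)$ has enough projectives and injectives, identifying $\underline{\CM}_G$ with $\underline{\CM}$ and $\overline{\CM}_G$ with $\overline{\CM}$, and matching $\tau_G$ with $\tau_n$ via the uniqueness clause of Theorem \ref{Theroem-relative-tau} together with Iyama's higher Auslander--Reiten duality --- are exactly the bookkeeping the paper leaves implicit.
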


\begin{proof}
In the above proposition, set $G=\Ext^n_{\CM}( - , - )$.
\end{proof}

In view of this corollary, one can reformulate the isomorphisms given in Theorem \ref{Theroem-relative-tau} in terms of the classical $n$-Auslander-Reiten translations.

\section{Relative Grothendieck groups}\label{Section 5}
Let $\CC$ be a Krull-Schmidt category. The Grothendieck group of $\CC$ with respect to the trivial split exact structure is denoted by ${\rm K}_0(\CC, 0)$. It is defined to be the free abelian group $\oplus_{[X] \in \ind\CC}\Z[X]$ generated by the set $\ind\CC$ of all isomorphism classes of indecomposable objects in $\CC$. We denote by $[X]$ the element in ${\rm K}_0(\CC, 0)$ corresponding to the object $X$ of $\CC$. 

Assume furthermore that $(\CC, \CE)$ is an exact category. We denote by ${\rm Ex}(\CC)$ the subgroup of ${\rm K}_0(\CC, 0)$ generated by
\[\{  [X]-[Y]+[Z] \,| \  X {\rightarrowtail} Y {\twoheadrightarrow} Z \ \text{\ is a conflation in} \ \CC \}.\]
The quotient group ${\rm K}_0(\CC, \CE):={\rm K}_0(\CC, 0)/{\rm Ex}(\CC)$ is called the Grothendieck group of $\CC$ with respect to the exact structure $\CE$. If $\CC$ is an abelian category, the Grothendieck group of $\CC$ will be denoted by ${\rm K}_0(\CC)$.

\s \label{DN3.1} 
Let $\CM$ be an $n$-cluster tilting subcategory of $\mmod\La$, where $n>0$ is a positive integer. Recently, it is shown \cite[Proposition 3.1]{DN} that there exists an isomorphism  
\[\mathbb{H} : {\rm K}_0(\CM, 0) \lrt {\rm K}_0(\mmod \CM)\]
of the Grothendieck groups that maps $[M]$ to $[(-, M)]$. \\

Our aim in this section is to provide a relative version of this fact. Throughout, let $\CM$ be an $n$-cluster tilting subcategory of $\mmod\La$ and $F$ be an additive subfunctor of $\Ext^n_{\CM}( - , - )$. 

\begin{definition}
The Grothendieck group of $\CM$ with respect to $F$, denoted by ${\rm K}_0(\CM, F)$, is defined to be the quotient group ${\rm K}_0(\CM, 0)/{\rm Ex}^n_F(\CM)$, where ${\rm Ex}^n_F(\CM)$ is the subgroup of ${\rm K}_0(\CM, 0)$ generated by the set
\[\{ \sum^{n+1}_{i=0}(-1)^i[M_i] \ | \ 0 \lrt M^0 \lrt M^1\lrt \cdots \lrt M^{n+1}\lrt 0 \ \text{\ is an $F$-exact sequence}\}.\]
As before, since $n$ is fixed, we omit it and write ${\rm Ex}_F(\CM)$ for ${\rm Ex}^n_F(\CM)$. 
\end{definition}

Note that if $n=1$ and $F=\Ext^1_{\La}( - , - )$, then $\CM=\mmod\La$ and ${\rm K}_0(\mmod\La, \Ext^1_{\La})$ is the classical Grothendieck group of $\mmod\La$. See \cite{R} and \cite{DN} for the case when $n>1$ and $F=\Ext^n_{\La}( - , - )$.  

\s \label{Cdefect} Let
\[\delta \colon 0 \lrt M^0\lrt M^1\lrt \cdots \lrt M^n\lrt M^{n+1} \lrt 0 \]
be an $n$-exact sequence in $\CM$. By \cite[Definition 3.1]{JK}, the contravariant defect of $\delta$, denoted by $\delta^*$, is defined by the exact sequence
\[0 \lrt ( - , M^0) \lrt ( - , M^1)\lrt \cdots \lrt ( - , M^n)\lrt ( - , M^{n+1}) \lrt \delta^* \lrt 0\]
of functors. It follows from the definition that every functor $G$ in $\mmod\underline{\CM}$ is a contravariant defect of an $n$-exact sequence; see \cite[Lemma 3.2]{DN}, where they called such functors effaceable. Similarly, one can see that every functor $G$ in $\mmod\underline{\CM}_F$ is a contravariant defect of an $F$-exact sequence. We use this fact without proof.\\

The following proposition shows that the relative Grothendieck group of an $n$-cluster tilting subcategory can be obtained as a factor of the Grothendieck group of an abelian category. 

\begin{proposition}
There is an isomorphism
\[{\rm K}_0(\CM, F) \simeq \frac{{\rm K}_0(\mmod \CM)}{{\rm K}_{F}(\CM)}\]
of abelian groups, where ${\rm K}_{F}(\CM)$ denote the subgroup of ${\rm K}_0(\mmod\CM)$ generated by all $[\delta^*]$, where $\delta$ ranges over $F$-exact sequences in $\CM$. 
\end{proposition}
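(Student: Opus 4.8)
The plan is to leverage the isomorphism $\mathbb{H}\colon {\rm K}_0(\CM,0)\lrt {\rm K}_0(\mmod\CM)$, $[M]\mapsto [(-,M)]$, recalled in \ref{DN3.1}, and to show that it carries the subgroup ${\rm Ex}_F(\CM)$ of ${\rm K}_0(\CM,0)$ exactly onto the subgroup ${\rm K}_F(\CM)$ of ${\rm K}_0(\mmod\CM)$. Once $\mathbb{H}({\rm Ex}_F(\CM))={\rm K}_F(\CM)$ is established, the composite ${\rm K}_0(\CM,0)\st{\mathbb{H}}\lrt {\rm K}_0(\mmod\CM)\twoheadrightarrow {\rm K}_0(\mmod\CM)/{\rm K}_F(\CM)$ is surjective with kernel $\mathbb{H}^{-1}({\rm K}_F(\CM))={\rm Ex}_F(\CM)$, so it descends to the asserted isomorphism
\[{\rm K}_0(\CM,F)={\rm K}_0(\CM,0)/{\rm Ex}_F(\CM)\ \simeq\ {\rm K}_0(\mmod\CM)/{\rm K}_F(\CM).\]
Thus everything reduces to matching the two defining sets of generators under $\mathbb{H}$.

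First I would compute the image of a generator of ${\rm Ex}_F(\CM)$. Fix an $F$-exact sequence
\[\delta\colon 0\lrt M^0\lrt M^1\lrt\cdots\lrt M^{n+1}\lrt 0,\]
whose associated generator is $\sum_{i=0}^{n+1}(-1)^i[M^i]$. By the description of the contravariant defect in \ref{Cdefect}, there is an exact sequence
\[0\lrt (-,M^0)\lrt (-,M^1)\lrt\cdots\lrt (-,M^{n+1})\lrt \delta^*\lrt 0\]
of functors in $\mmod\CM$. Since ${\rm K}_0(\mmod\CM)$ is the Grothendieck group of an abelian category, every exact sequence imposes the alternating-sum relation on classes, giving $\sum_{i=0}^{n+1}(-1)^i[(-,M^i)]+(-1)^{n+2}[\delta^*]=0$. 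Because $\mathbb{H}[M^i]=[(-,M^i)]$, this yields
\[\mathbb{H}\Big(\sum_{i=0}^{n+1}(-1)^i[M^i]\Big)=\sum_{i=0}^{n+1}(-1)^i[(-,M^i)]=(-1)^{n+1}[\delta^*].\]

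The two inclusions now follow from this identity. Since every generator of ${\rm Ex}_F(\CM)$ maps to $(-1)^{n+1}[\delta^*]\in {\rm K}_F(\CM)$, we get $\mathbb{H}({\rm Ex}_F(\CM))\subseteq {\rm K}_F(\CM)$. Conversely, each generator $[\delta^*]$ of ${\rm K}_F(\CM)$ equals $(-1)^{n+1}\mathbb{H}\big(\sum_{i=0}^{n+1}(-1)^i[M^i]\big)$, and as $\sum_{i=0}^{n+1}(-1)^i[M^i]\in {\rm Ex}_F(\CM)$, this exhibits $[\delta^*]$ as an element of $\mathbb{H}({\rm Ex}_F(\CM))$; hence ${\rm K}_F(\CM)\subseteq \mathbb{H}({\rm Ex}_F(\CM))$. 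The main obstacle, such as it is, lies precisely in this bookkeeping: one must be careful that $\delta^*$ genuinely lands in the abelian category $\mmod\CM$ (so that $[\delta^*]$ is a legitimate class on which the Euler relation applies) and that the sign $(-1)^{n+1}$ does not disturb the generated subgroups, since a cyclic subgroup is unchanged under negation of its generator. Granting these routine verifications, the matching of generators is exact and the quotient isomorphism follows as above.
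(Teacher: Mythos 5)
Your proposal is correct and follows essentially the same route as the paper: both use the isomorphism $\mathbb{H}\colon {\rm K}_0(\CM,0)\lrt {\rm K}_0(\mmod\CM)$ of \ref{DN3.1}, check that it matches the generators of ${\rm Ex}_F(\CM)$ with the generators $[\delta^*]$ of ${\rm K}_F(\CM)$, and pass to the quotients. The only difference is cosmetic: you track the sign $(-1)^{n+1}$ coming from the Euler relation for the defect sequence (which the paper suppresses, harmlessly, since negating generators does not change the generated subgroup), so your write-up is if anything slightly more careful.
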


\begin{proof}
Let
\[\delta \colon 0 \lrt M^0 \lrt M^1\lrt \cdots \lrt M^n \lrt M^{n+1}\lrt 0\]
be an $F$-exact sequence. It is easy to see that the morphism $\mathbb{H}$ of \ref{DN3.1}, maps element $\sum^{n+1}_{i=0}(-1)^i[M_i]$ to $[\delta^*]$. Hence by restricting the isomorphism $\mathbb{H}$ to ${\rm Ex}_F(\CM)$ we get the commutative diagram
\begin{equation}\label{digarm1M}
\xymatrix{ {\rm K}_0(\CM, F) & \frac{{\rm K}_0(\mmod \CM)} { {\rm K}_{F}(\CM)}\\
{\rm K}_0(\CM,0) \ar[u] \ar[r]^{\mathbb{H} \ \ \ } & {\rm K}_0(\mmod \CM) \ar[u]\\
{\rm Ex}_F(\CM)	\ar@{^(->}[u] \ar[r]^{\mathbb{H}| \ \ \ } & {\rm K}_{F}(\CM) \ar@{^(->}[u]}
\end{equation}
with isomorphisms in the rows. This, in turn, induces the isomorphism
\[{\overline{\mathbb{H}}}: {\rm K}_0(\CM,F) {\lrt} \frac{{\rm K}_0(\mmod \CM)} { {\rm K}_{F}(\CM)}\]
of abelian groups, that completes the proof. 
\end{proof}

\s \label{newstructure} Let $\CX = \CP(F)$ be the subcategory of $\CM$ consisting of all $F$-projective modules, where as in our setup, $F$ is an additive subfunctor of $\Ext^n_{\CM}( - , - )$. Let $F^1_{\CX}$ be the class of all short exact sequences $\xi$ in $\mmod\La$ such that $\Hom_{\La}(\CX,\xi)$ is exact. This forms a subfunctor of $\Ext^1_{\La}( - , - )$. By \cite{ASo}, it is indeed a closed subfunctor of $\Ext^1_{\La}( - , - )$, meaning that $(\mmod\La, F^1_{\CX})$ is an exact category, see also \cite{Bu}. We let ${\rm K}_0(\mmod\La, F^1_{\CX})$ denote the Grothendieck group of $\mmod\La$ with respect to this exact structure. \\

Using this, we can record the following corollary. 

\begin{corollary}
There is an isomorphism 
\[{\rm K}_0(\mmod \La, F^1_{\CX}) \simeq \frac{{\rm K}_0(\mmod\mmod \La)}{{\rm K}_{F}(\mmod\La)}\]
of abelian groups, where ${\rm K}_{F}(\mmod\La)$ denote the subgroup of ${\rm K}_0(\mmod\mmod\La)$ generated by the collection of all functors in $\mmod\mmod\La$ that vanish on $\CP(F).$
\end{corollary}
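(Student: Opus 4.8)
The plan is to deduce the statement from the preceding Proposition by regarding $\mmod\La$ as the (unique) $1$-cluster tilting subcategory of itself and taking the subfunctor of $\Ext^1_{\La}(-,-)$ to be $F^1_{\CX}$, where $\CX=\CP(F)$. Under this specialization the preceding Proposition yields an isomorphism
\[{\rm K}_0(\mmod\La, F^1_{\CX}) \simeq \frac{{\rm K}_0(\mmod\mmod\La)}{{\rm K}_{F^1_{\CX}}(\mmod\La)},\]
in which the denominator is the subgroup of ${\rm K}_0(\mmod\mmod\La)$ generated by the contravariant defects $\delta^*$ of the $F^1_{\CX}$-exact short exact sequences $\delta$ in $\mmod\La$. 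Since, for $n=1$, the $F^1_{\CX}$-exact sequences are precisely the conflations of the exact structure $F^1_{\CX}$, the left-hand group coincides with the one appearing in the statement; thus everything reduces to proving the equality of subgroups ${\rm K}_{F^1_{\CX}}(\mmod\La)={\rm K}_{F}(\mmod\La)$ inside ${\rm K}_0(\mmod\mmod\La)$.

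The first step is to compute $\CP(F^1_{\CX})$. I would first observe that $\CP(\La)\subseteq\CP(F)$: for a projective module $P$ one has $F(P,-)\subseteq\Ext^n_{\CM}(P,-)=0$, so $P$ is $F$-projective by Corollary \ref{vanishing-proj-inj}. Hence $\CX=\CP(F)$ is an additive subcategory of $\mmod\La$ containing $\CP(\La)$, and, since $F^1_{\CX}$ is exactly the subfunctor $F_{\CX}$ of $\Ext^1_{\La}$ in the notation of \ref{notation2.10} (with $n=1$), Proposition \ref{proj-injectiv-objec} gives $\CP(F^1_{\CX})=\CX\cup\CP(\La)=\CP(F)$. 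In other words, the $F^1_{\CX}$-projective modules are precisely the $F$-projective ones.

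It then remains to match the two generating families term by term. Unwinding the definition of the contravariant defect of $\delta:0\to M^0\to M^1\to M^2\to 0$ shows that $\delta^*(X)=\Coker(\Hom_{\La}(X,M^1)\to\Hom_{\La}(X,M^2))$, so $\delta^*$ vanishes on a module $X$ if and only if $\Hom_{\La}(X,\delta)$ is exact; consequently $\delta$ is $F^1_{\CX}$-exact if and only if $\delta^*$ vanishes on $\CX=\CP(F)$. This gives the inclusion ${\rm K}_{F^1_{\CX}}(\mmod\La)\subseteq{\rm K}_{F}(\mmod\La)$. For the reverse inclusion I would invoke the effaceability statement recorded in \ref{Cdefect}: because $\CP(F^1_{\CX})=\CP(F)$, every finitely presented functor on $\mmod\La$ vanishing on $\CP(F)$ is the contravariant defect of some $F^1_{\CX}$-exact sequence, so each generator $[G]$ of ${\rm K}_{F}(\mmod\La)$ equals $[\delta^*]$ for a suitable $F^1_{\CX}$-exact $\delta$. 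The two subgroups therefore coincide, and substituting this equality into the displayed isomorphism finishes the proof.

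The computation is routine once the correct specialization is identified; the load-bearing step is the effaceability input from \ref{Cdefect}, which is what converts the intrinsic description ``defect of an $F^1_{\CX}$-exact sequence'' into the transparent one ``finitely presented functor vanishing on $\CP(F)$.'' The only point requiring genuine care is the identity $\CP(F^1_{\CX})=\CP(F)$, since it is precisely the containment $\CP(\La)\subseteq\CP(F)=\CX$ that lets Proposition \ref{proj-injectiv-objec} apply without the extra $\cup\,\CP(\La)$ altering the answer.
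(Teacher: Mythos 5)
Your proof is correct and is essentially the paper's own argument: the paper likewise specializes the isomorphism of \ref{DN3.1} to the $1$-cluster tilting subcategory $\mmod\La$ and restricts it to ${\rm Ex}_{F^1_{\CX}}(\mmod\La)$ to induce the quotient isomorphism, which is exactly what your appeal to the preceding Proposition (with $n=1$, $\CM=\mmod\La$, and $F^1_{\CX}$ as the subfunctor) amounts to. The only divergence is that you explicitly verify the subgroup identification ${\rm K}_{F^1_{\CX}}(\mmod\La)={\rm K}_{F}(\mmod\La)$ --- via $\CP(F^1_{\CX})=\CP(F)$ and the effaceability fact of \ref{Cdefect} --- a step the paper compresses into ``it is easy to see.''
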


\begin{proof}
By applying the isomorphism \ref{DN3.1} to the $1$-cluster tilting subcategory $\mmod\La$, we obtain the isomorphism
\[\mathbb{H}':{\rm K}_0(\mmod \La, 0) \lrt {\rm K}_0(\mmod\mmod \La)\]
of groups. It is easy to see that the restriction of $\mathbb{H}'$ to ${\rm Ex}_{F^1_{\CX}}(\mmod\La)$ induces the commutative diagram
\begin{equation}\label{DiagramM12}
\xymatrix{ {\rm K}_0(\mmod \La, F^1_{\CX}) & \frac{{\rm K}_0(\mmod \mmod \La)} {{\rm K}_{F}(\mmod\La)}\\
	\mathrm{K}_0(\mmod \La,0) \ar[u] \ar[r]^{\mathbb{H}'} & {\rm K}_0(\mmod\mmod\La) \ar[u]\\
	{\rm Ex}_{F^1_{\CX}}(\mmod\La) \ar@{^(->}[u] \ar[r]^>>>>>>{\mathbb{H}'|} & {\rm K}_{F}(\mmod\La) \ar@{^(->}[u]}
	\end{equation}
where the rows are isomorphisms. This induces the isomorphism
\[\overline{\mathbb{H}'}: {\rm K}_0(\mmod\La, F^1_{\CX}) \lrt \frac{{\rm K}_0(\mmod \mmod \La)} {{\rm K}_{F}(\mmod\La)}\]
of groups, and hence completes the proof.
\end{proof}

\s \label{combine} Consider the restriction functor $\CR:\mmod\mmod\La \lrt \mmod\CM$ that maps any functor $G \in \mmod\mmod\La$ to the composition of $G$ with the inclusion $\CM \hookrightarrow \mmod\La$, that is, $\CR(G) = G|_{\CM}$. Since the functor $\CR$ is exact, it induces a group homomorphism 
\[ \overline{\CR}:{\rm K}_0(\mmod\mmod\La) \lrt {\rm K}_0(\mmod\CM).\] 
It is also clear that the restriction of $\overline{\CR}$ to ${{\rm K}_{F}(\mmod\La)}$ induces the homomorphism 
\[\overline{\CR}|: {{\rm K}_{F}(\mmod\La)} \lrt {\rm K}_F(\CM)\]
of groups.

Combing this restriction with the diagrams \eqref{digarm1M} and \eqref{DiagramM12} induces the commutative diagram
\begin{equation}\label{DiagramM123}
\xymatrix{ {\rm K}_0(\mmod \La, F^1_{\CX})\ar[r]^{\overline{\mathbb{H'}} \ \ \ \ \ } & \frac{{\rm K}_0(\mmod \mmod \La)} {{\rm K}_{F}(\mmod\La)} \ar[r]^{ \ \ \ \overline{\overline{\CR}}}&\frac{{\rm K}_0(\mmod \CM)}{{\rm K}_{F}(\CM)} &{\rm K}_0(\CM, F)\ar[l]_{ \ \ \ \overline{\mathbb{H}}} &\\
	\mathrm{K}_0(\mmod \La,0) \ar[u] \ar[r]^{\mathbb{H'} \ \ \ \ \ } & {\rm K}_0(\mmod\mmod\La) \ar[r]^{ \ \ \ \overline{\CR}} \ar[u] & {\rm K}_0(\mmod \CM)\ar[u] &\mathrm{K}_0(\CM,0)\ar[l]_{ \ \ \ \mathbb{H}}\ar[u] &\\
	{\rm Ex}_{F^1_{\CX}}(\mmod\La) \ar@{^(->}[u] \ar[r]^>>>>>>{\mathbb{H'}|} & {\rm K}_{F}(\mmod\La) \ar@{^(->}[u]\ar[r]^{ \ \ \ \overline{\CR}|}& {\rm K}_{F}(\CM) \ar@{^(->}[u] &{\rm Ex}_F(\CM)\ar[l]_{ \ \ \ \mathbb{H}|}\ar@{^(->}[u] &}
\end{equation}
	
As a result of the top row of the diagram, we obtain the homomorphism
\[\mathbb{T}:=\overline{\mathbb{H}}^{-1}\circ \overline{\overline{\CR}}\circ \overline{\mathbb{H'}}: {\rm K}_0(\mmod \La, F^1_{\CX}) \lrt {\rm K}_0(\CM, F) \]
of groups. 

\s For the proof of our last result in this section, we need to recall another homomorphism from \cite{R}. There is a trivial group homomorphism
\begin{align*}
\mathbb{I}_0: \mathrm{K}_0(\CM,0) & \lrt \mathrm{K}_0(\mmod \Lambda,0).\\
 [M]& \ \mapsto [M]
\end{align*}
By using the same argument as in the Lemma 3.3 of \cite{R}, one can see that $\mathbb{I}_0$ induces a well-defined homomorphism
\begin{align*}
\mathbb{I}:\mathrm{K}_0(\CM, F) \lrt {\rm K}_0(\mmod\Lambda, F^1_{\CX})
\end{align*}
of groups. 

\begin{theorem}\label{eqofgro}
The homomorphism 
\[\mathbb{I} : \mathrm{K}_0(\CM, F) \lrt {\rm K}_0(\mmod\Lambda, F^1_{\CX})\]
is an isomorphism of abelian groups.
\end{theorem}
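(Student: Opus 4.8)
The plan is to show that the homomorphism $\mathbb{I}$ constructed in \ref{combine} is an isomorphism by exhibiting $\mathbb{T}:=\overline{\mathbb{H}}^{-1}\circ \overline{\overline{\CR}}\circ \overline{\mathbb{H'}}$ as a two-sided inverse. Since $\mathbb{I}$ is already known to be a well-defined group homomorphism, and $\mathbb{T}$ is manifestly a homomorphism being a composition of homomorphisms, it suffices to verify that $\mathbb{T}\circ\mathbb{I}=\mathrm{id}$ on ${\rm K}_0(\CM, F)$ and $\mathbb{I}\circ\mathbb{T}=\mathrm{id}$ on ${\rm K}_0(\mmod\La, F^1_{\CX})$. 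Because all groups in sight are generated (as abelian groups) by the classes $[M]$ of indecomposable objects, it is enough to check these identities on generators, which reduces the whole argument to tracing where a single class $[M]$ travels through the three columns of the diagram \eqref{DiagramM123}.

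First I would verify the composite $\mathbb{T}\circ\mathbb{I}$. Starting from a generator $[M]\in {\rm K}_0(\CM,F)$, the map $\mathbb{I}$ sends it to $[M]\in {\rm K}_0(\mmod\La, F^1_{\CX})$, where $M$ is regarded as a $\La$-module. Applying $\overline{\mathbb{H'}}$ yields the class of the representable functor $[(-,M)]$ in ${\rm K}_0(\mmod\mmod\La)/{\rm K}_F(\mmod\La)$, and applying $\overline{\overline{\CR}}$ restricts this representable functor along $\CM\hookrightarrow\mmod\La$, producing $[\Hom_\La(-,M)|_{\CM}]=[(-,M)]$ in ${\rm K}_0(\mmod\CM)/{\rm K}_F(\CM)$. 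Finally $\overline{\mathbb{H}}^{-1}$ sends this back to $[M]\in {\rm K}_0(\CM,F)$, since $\overline{\mathbb{H}}$ is precisely the isomorphism carrying $[M]$ to $[(-,M)]$. The key point here is that restriction of a representable $\La$-module functor $(-,M)$ to $\CM$ is again the representable $\CM$-module $(-,M)$, which is immediate from the definition of $\CR$. Thus $\mathbb{T}\circ\mathbb{I}=\mathrm{id}$.

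For the reverse composite $\mathbb{I}\circ\mathbb{T}$, the main obstacle is that the natural direction of the diagram runs through the restriction functor $\CR$, and one must ensure that passing from $\mmod\La$-functors to $\CM$-functors and then back via $\mathbb{I}$ lands on the original class. The cleanest route is to observe that $\mathbb{T}$ and $\mathbb{I}$ are each built from the horizontal isomorphisms $\overline{\mathbb{H}}$, $\overline{\mathbb{H'}}$ of the top row together with $\overline{\overline{\CR}}$, and that the identity $\mathbb{T}\circ\mathbb{I}=\mathrm{id}$ already forces $\mathbb{I}$ to be injective and $\mathbb{T}$ to be surjective. To upgrade this to a genuine inverse, I would argue that $\overline{\overline{\CR}}$ is itself an isomorphism: the functor $\CR$ together with the higher Auslander correspondence (the isomorphism $\mathbb{H}$ of \ref{DN3.1} in both the $1$-cluster tilting case $\mmod\La$ and the general case $\CM$) identifies the two quotient groups in the top row, and the vanishing conditions defining ${\rm K}_F(\mmod\La)$ and ${\rm K}_F(\CM)$ correspond exactly under restriction, since a functor in $\mmod\mmod\La$ vanishes on $\CP(F)$ if and only if its restriction to $\CM$ does — this uses that $\CP(F)\subseteq\CM$. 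Granting that $\overline{\overline{\CR}}$ is bijective, all three maps in the top row of \eqref{DiagramM123} are isomorphisms, hence so is $\mathbb{T}$, and since $\mathbb{T}\circ\mathbb{I}=\mathrm{id}$ on a free-abelian-group generating set, $\mathbb{I}=\mathbb{T}^{-1}$ is an isomorphism.

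The step I expect to be the genuine crux is establishing that $\overline{\overline{\CR}}$ is an isomorphism, or equivalently that the restriction functor induces a bijection between the relevant subgroups ${\rm K}_F(\mmod\La)$ and ${\rm K}_F(\CM)$. The forward compatibility $\overline{\CR}|\colon {\rm K}_F(\mmod\La)\to {\rm K}_F(\CM)$ is recorded in \ref{combine}, so the work lies in surjectivity and injectivity of the induced map on quotients; here I would lean on the concrete description of ${\rm K}_F(\CM)$ as generated by contravariant defects $[\delta^*]$ of $F$-exact sequences (from \ref{Cdefect}) and on the fact, noted there, that every functor in $\mmod\underline{\CM}_F$ arises as such a defect. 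Tracking generators through $\mathbb{H'}$, $\CR$ and $\mathbb{H}$ on the commuting lower rows of \eqref{DiagramM123}, together with the faithfulness of restriction on representable functors, should close the gap without further machinery.
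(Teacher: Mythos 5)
Your first half is exactly the paper's argument: tracing a generator $[M]$ of $\mathrm{K}_0(\CM,F)$ through the top row of \eqref{DiagramM123} to get $\mathbb{T}\circ\mathbb{I}={\rm id}_{\mathrm{K}_0(\CM,F)}$, hence injectivity of $\mathbb{I}$. The second half, however, has a genuine gap. You reduce everything to the claim that $\overline{\overline{\CR}}$ is an isomorphism, but you never prove it — you explicitly write ``granting that $\overline{\overline{\CR}}$ is bijective.'' The supporting observation you offer (a functor $G\in\mmod\mmod\La$ vanishes on $\CP(F)$ if and only if $G|_{\CM}$ does, since $\CP(F)\subseteq\CM$) only shows that the subgroup ${\rm K}_{F}(\mmod\La)$ is carried \emph{into} ${\rm K}_{F}(\CM)$, and that generators of ${\rm K}_F(\CM)$ are hit; it says nothing about injectivity of the induced map on quotients, i.e.\ about why $\overline{\CR}^{\,-1}({\rm K}_F(\CM))\subseteq {\rm K}_F(\mmod\La)$. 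That inclusion is true, but only as a \emph{consequence} of the theorem, so assuming it is circular.

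The reason the gap cannot be closed ``without further machinery,'' as you hope, is that any route — whether injectivity of $\overline{\overline{\CR}}$, or the identity $\mathbb{I}\circ\mathbb{T}={\rm id}$, or surjectivity of $\mathbb{I}$ — forces you to handle generators $[X]$ of ${\rm K}_0(\mmod\La, F^1_{\CX})$ with $X\notin\CM$. To compute $\overline{\CR}\bigl([\Hom_\La(-,X)]\bigr)$ in ${\rm K}_0(\mmod\CM)$ one must resolve the functor $\Hom_\La(-,X)|_{\CM}$ by representables $\CM(-,M)$, $M\in\CM$, and this is precisely Iyama's resolution theorem \cite[Theorem 2.2.3]{I1}: there is an exact sequence $0\lrt M^1_X\lrt\cdots\lrt M^n_X\lrt X\lrt 0$ with all $M^i_X\in\CM$ which stays exact under $\Hom_\La(M,-)$ for every $M\in\CM$. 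This is exactly what the paper uses, but in a more economical way: since that sequence is in particular exact in the exact structure $(\mmod\La, F^1_{\CX})$ (as $\CX=\CP(F)\subseteq\CM$), it gives $[X]=\sum_i(-1)^i[M^i_X]$ in ${\rm K}_0(\mmod\La,F^1_{\CX})$, so every class lies in the $\Z$-span of classes of objects of $\CM$, i.e.\ in the image of $\mathbb{I}$. Surjectivity plus your (correct) left-inverse computation then finishes the proof, with no need to analyze $\overline{\overline{\CR}}$ at all. To repair your write-up, replace the ``granting'' step by this surjectivity argument via Iyama's resolutions.
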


\begin{proof}
Let $[M] \in {\rm K}_0(\CM, F)$. It follows from the equalities
\begin{align*}
    \mathbb{T}\circ \mathbb{I}([M])&=\overline{\mathbb{H}}^{-1}\circ \overline{\overline{\CR}}\circ \overline{\mathbb{H'}} \circ \mathbb{I}([M])\\
    &= \overline{\mathbb{H}}^{-1} \circ \overline{\overline{\CR}}\circ \overline{\mathbb{H'}}([M])\\
    &= \overline{\mathbb{H}}^{-1} \circ \overline{\overline{\CR}}([\mmod \La(-, M)])\\
    &= \overline{\mathbb{H}}^{-1}([\mmod \La(-, M)|_{\CM}])\\
    &= \overline{\mathbb{H}}^{-1}([\CM(-, M)|_{\CM}])\\
    &=[M].
\end{align*}
that $\mathbb{T}\circ \mathbb{I}={\rm id}_{\mathrm{K}_0(\CM, F)}$. Therefore the group homomorphism $\mathbb{I}$ is injective. We show that $\mathbb{I}$ is surjective. By \cite[Theorem 2.2.3]{I1}, for every module $X$ there exists an exact sequence
\begin{equation}
\eta \colon 0 \lrt M^1_X \lrt \cdots\lrt M^n_X \lrt X\lrt 0 \notag
\end{equation}
such that $M^i_X$, for $i \in \{1, \ldots, n \}$, lies in $\CM$ and the sequence remains exact after applying the functor $\Hom_{\La}(M, -)$ for every $M \in \CM.$ In particular, the above sequence is exact in the exact category $(\mmod \La, F^1_{\CX})$. Hence we have equality 
\[ [X]=\sum^{n-1}_{i=0}(-1)^i[M^i_X] \]
in ${\rm K}_0(\mmod \La, F^1_{\CX})$. According to this equality, to complete the proof, we need only to show that $[M]$, for every $M \in \CM$, lies in the image of $\mathbb{I}$. But this is clear by the definition of $\mathbb{I}$. Hence the proof is complete.
\end{proof}

\begin{remark}
The above theorem not only provides a relative version of \cite[Theorem 3.11]{DN} but also removes the strong assumption that they need on the finiteness of the length of the effaceable functors, i.e. functors that vanish on projective modules.
\end{remark}

\section{Representation type of $\CM$}\label{section 6}
An artin algebra $\La$ is of finite representation type if, up to isomorphism, there are only finitely many indecomposable objects in $\mmod\La$. Butler \cite{But} and Auslander \cite{Au3} proved that $\La$ is of finite representation type if and only if the relations of the Grothendieck group of $\La$ are generated by almost split sequences; in their notation languages ${\rm Ex}(\mmod\La) = {\rm AR}(\mmod\La)$. See \cite[Theorem 3.7]{E} for a version of this result in the context of exact categories and \cite{DN} for a higher version in the context of $n$-cluster tilting subcategories. In this section, we study this result in the context of relative higher homological algebra. Let us begin with the following two facts.

\s \label{deltastar} Let $\delta$ be the $n$-almost split sequence
\[0 \lrt M^0 \st{f^0}\lrt M^1 \st{f^1}\lrt M^2 \lrt \cdots \lrt M^{n} \st{f^{n}}{\lrt} M^{n+1} \lrt 0 \]
in $\CM$. By \ref{almost split}, there exists the sequence
\begin{align*}
0 \lrt \Hom_\La( - , M^0) \lrt \cdots \lrt \Hom_\La( - , M^{n+1}) \lrt\Hom_\La( - , M^{n+1})/\mathcal{J}_\La( - , M^{n+1}) \lrt 0,
\end{align*}
which is exact on $\CM$. Hence, by \ref{Cdefect}, $\delta^*=\Hom_\La( - , M^{n+1})/\mathcal{J}_\La( - , M^{n+1})$.

When $M^{n+1}$ is an indecomposable module, for any $X\in\ind\CM$,
\[\delta^*(X)=\left\{\begin{array}{lll}
\ \ \ \ \ \ \ 0  &  {\rm if} \ X \not\simeq  M^{n+1};\\
\\
\frac{{\rm End}_{\La}(M^{n+1})}{\mathcal{J}_\La(M^{n+1}, M^{n+1})}   & {\rm if} \ X\simeq  M^{n+1}.
\end{array}\right.\]

\s \label{KV3.4} A functor $F$ in $\mmod \CM$ is said to be of finite length if it has a finite composition series. The support of $F$, denoted by {\rm supp}$(F)$, is a set of representatives of isomorphism classes of all indecomposable objects $M \in \CM$ such that $F(M) \not = 0$. We know from \cite[Lemma 3.4]{KV} that a functor $F$ has finite length if and only if the support of $F$ is finite.

\begin{proposition}\label{Groupisomorphism}
Let $F$ be an additive subfunctor of $\Ext^n_{\CM}( - , - )$. Let ${\rm AR}_F(\CM)$ denote the subgroup of ${\rm Ex}_F(\CM)$ generated by the set
\[\{\sum^{n+1}_{i=0}(-1)^i[M_i] \,| \ 0 \lrt M^0 \lrt M^1\lrt \cdots \lrt M^{n+1}\lrt 0 \ \text{\ is an $F$-almost split  sequence}\}.\]
The following statements are equivalent.
\begin{itemize}
\item[(1)] ${\rm Ex}_F(\CM)={\rm AR}_F(\CM)$.
\item[(2)] Every functor $G \in \mmod\underline{\CM}_F$ has finite length.
\end{itemize}
\end{proposition}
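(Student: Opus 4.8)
The plan is to establish the equivalence by analyzing the group $\mathrm{K}_0(\mmod\underline{\CM}_F)$ in terms of the contravariant defects, and to exploit the structure theory of $F$-almost split sequences developed in the previous sections. The starting point is the observation, recorded just before the statement, that every functor $G \in \mmod\underline{\CM}_F$ arises as the contravariant defect $\delta^*$ of some $F$-exact sequence $\delta$. Since the assignment $\delta \mapsto \delta^*$ sends $\sum_{i=0}^{n+1}(-1)^i[M_i]$ to $[\delta^*]$ (as used in the proof of the isomorphism $\overline{\mathbb{H}}$), the subgroup $\mathrm{Ex}_F(\CM)/\mathrm{AR}_F(\CM)$ measures precisely the failure of the classes $[\delta^*]$ of arbitrary $F$-exact defects to be generated by the classes $[\delta^*]$ of $F$-almost split defects. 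The key translation is therefore: condition $(1)$ holds if and only if the subgroup of $\mathrm{K}_0(\mmod\underline{\CM}_F)$ generated by all defects of $F$-exact sequences coincides with the subgroup generated by defects of $F$-almost split sequences.

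First I would make precise the role of the $F$-almost split defects. By \ref{deltastar}, when $\delta$ is an $n$-almost split sequence with indecomposable end term $M^{n+1}$, its defect $\delta^*$ is the simple functor $S_{M^{n+1}} = \Hom_\La(-,M^{n+1})/\CJ_\La(-,M^{n+1})$ supported at the single indecomposable $M^{n+1}$. By Corollary \ref{exsitenceAlmosSplit}, assuming $F$ has enough projectives and injectives, every indecomposable $X \notin \CP(F)$ is the end term of an $F$-almost split sequence; hence the $F$-almost split defects realize exactly the simple objects of the abelian category $\mmodd\CM = \mmod\underline{\CM}_F$. Consequently $\mathrm{AR}_F(\CM)$ corresponds, under $\overline{\mathbb{H}}$, to the subgroup of $\mathrm{K}_0(\mmod\underline{\CM}_F)$ generated by the classes of the simple functors.

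The heart of the argument is then the following dichotomy. For the direction $(2) \Rightarrow (1)$: if every $G \in \mmod\underline{\CM}_F$ has finite length, then by \ref{KV3.4} its support is finite, so $G$ admits a finite composition series whose factors are simple functors $S_M$. In the Grothendieck group $\mathrm{K}_0(\mmod\underline{\CM}_F)$ this gives $[G] = \sum [S_{M_j}]$, expressing every defect class as a sum of $F$-almost split defect classes; translating back through $\overline{\mathbb{H}}$ yields $\mathrm{Ex}_F(\CM) = \mathrm{AR}_F(\CM)$. For the converse $(1) \Rightarrow (2)$, I would argue contrapositively: if some $G$ has infinite length, then by \ref{KV3.4} its support is infinite, and one shows its class $[G]$ cannot be written as a finite integer combination of the simple classes $[S_M]$ — essentially because each $[S_M]$ is supported at a single indecomposable, so a finite combination has finite support, whereas evaluating the would-be relation at infinitely many indecomposables in $\mathrm{supp}(G)$ forces a contradiction.

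The main obstacle I anticipate is the converse direction, specifically controlling the relations in the free-abelian-group quotient carefully enough to conclude that infinite length genuinely obstructs membership in $\mathrm{AR}_F(\CM)$. The subtlety is that $\mathrm{K}_0(\mmod\underline{\CM}_F)$ is a quotient of a free abelian group on isomorphism classes, and one must verify that the simple functors $S_M$ remain $\Z$-linearly independent (or at least that their supports let one detect coefficients) so that a class with infinite support cannot collapse to a finite combination. I would handle this by using the dimension/support functionals $G \mapsto (\text{length of } G(M))_{M \in \ind\CM}$, which are additive on short exact sequences and hence descend to the Grothendieck group; since each simple $S_M$ contributes nonzero length only at $M$, a finite sum of simples has finite support under these functionals, giving the desired contradiction when $G$ has infinite support. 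Once this independence is in place, both implications follow cleanly from the support criterion of \ref{KV3.4}.
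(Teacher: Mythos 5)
Your proposal is correct and takes essentially the same route as the paper: you identify the $F$-almost split defects with the simple functors (via \ref{deltastar}, Auslander's description of simples, and Corollary \ref{exsitenceAlmosSplit}), prove $(2)\Rightarrow(1)$ by transporting a composition series through the isomorphism $\mathbb{H}$, and prove $(1)\Rightarrow(2)$ by the support argument, where your evaluation-length functionals are exactly the content of the paper's citation of \cite[Lemma 3.6]{KV}. The only repair needed is to carry out the translation in ${\rm K}_0(\mmod\CM)$, where $\mathbb{H}$ actually lands, rather than in ${\rm K}_0(\mmod\underline{\CM}_F)$, whose natural map to ${\rm K}_0(\mmod\CM)$ need not be injective; this is cosmetic and does not affect your argument.
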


\begin{proof}
$(1) \Rightarrow (2)$. Let $G$ belong to $\mmod\underline{\CM}_F$. Then there is an $F$-exact sequence
\[\delta \colon 0 \lrt M^0\lrt M^1\lrt \cdots \lrt M^n\lrt M^{n+1}\lrt 0,\]
such that $\delta^*=G.$ Since ${\rm Ex}_F(\CM)={\rm AR}_F(\CM)$, there exist integers $\lambda_1, \ldots, \lambda_m$ and $F$-almost split sequences  
$$\delta_j \colon 0 \lrt X^0_j\lrt X^1_j \lrt \cdots \lrt X^{n}_j \lrt X^{n+1}_j \lrt 0,$$
for $j \in \{1, \ldots, m\}$, such that  
\[\sum^{n+1}_{i=0}(-1)^i[M^i]=\sum^{m}_{j=1}\lambda_j(\sum^{n+1}_{i=0}(-1)^i[X^i_j]) \]
in ${\rm K}_0(\CM, 0)$.

By applying the group isomorphism $\mathbb{H}$, given in \ref{DN3.1}, we get the equality
\begin{equation}\label{Equation1}
\mathbb{H}(\sum^{n+1}_{i=0}(-1)^i[M^i])=\mathbb{H}(\sum^{m}_{j=1}\lambda_j(\sum^{n+1}_{i=0}(-1)^i[X^i_j]))
\end{equation}
in ${\rm K}_0(\mmod \CM)$.
Hence
$$[G]=\sum^m_{j=1}\lambda_j[\delta^*_j].$$
Lemma 3.6 of \cite{KV} implies that
$${\rm supp}(G) \subseteq \bigcup^m_{j=1}{\rm supp}(\delta^*_j) $$
Since $\delta_j$'s are corresponded to simple functors in $\mmod \CM$, $|{\rm supp}(\delta^*_j)|=1$. Hence $G$ has finite support and therefore by \ref{KV3.4} is of finite length.

$(2)\Rightarrow (1)$. This follows by applying a similar argument as in the proof of Theorem 3.11 of \cite{DN}. For the convenience of the reader, we recall the sketch of the proof. Let 
\[\delta \colon 0 \lrt M^0\lrt M^1\lrt \cdots \lrt M^n\lrt M^{n+1}\lrt 0\]
be an $F$-exact sequence in ${\rm Ex}_F(\CM)$. By assumption, $\delta^*$ has finite length. So
\[[\delta^*] = [S_1] + [S_2] + \cdots + [S_t],\]
where $S_1, \ldots, S_t$ are simple composition factors of $\delta^*$. By Proposition 2.3 of \cite{Au2}, $S_i$, for each $i$, can be written as $\Hom_\La( - , X^{i})/\mathcal{J}_\La( - , X^{i})$, where $X^i \in \CM$ is an indecomposable module. Moreover, in view of \ref{deltastar}, for every indecomposable module $Y \in \CM$ which is not isomorphic to $X^i$, we have $S_i(Y)=0.$ So for all $i \in \{1, \ldots, t\}$, $\delta^*(X^i) \not = 0.$ This, in particular, implies that $X^i$, for all $i$, is not $F$-projective, as otherwise $\delta^*(M^i) = 0.$ So, by Corollary \ref{exsitenceAlmosSplit}, for each $X^i$ there exists an $F$-almost split sequence
\[\delta_i \colon 0 \lrt N^0_i \lrt N^1_i\lrt \cdots \lrt N^n_i \lrt X^i \lrt 0.\]
This follows that $[\delta^*_i] \in {\rm AR}_F(\CM)$ and hence completes the proof.
\end{proof}

In order to prove the main theorem of this section, i.e. Theorem \ref{main2}, we need some preparation, which is done in the following steps.

\s Recall \cite[Theorem 2.2.3]{I1} that for every $X\in\mmod\La$, there exists an exact sequence
\begin{equation}
\eta \colon 0 \lrt M^1_X \lrt \cdots\lrt M^n_X \lrt X\lrt 0. \notag
\end{equation}
such that $M^i_X \in \CM$, for $i \in \{1, \ldots, n \}$, and the sequence remains exact after applying the functor $\Hom_{\La}(\CM, -)$. 
The index with respect to $\CM$ is defined in \cite[Definition 1.4]{R} by the map
\begin{align*}
\mathrm{Ind}_{\CM} : \ & \mmod\La \lrt \mathrm{K}_0(\CM,0)\\
& \ \ \ X \mapsto \sum^{n}_{i=1}(-1)^i[M^i_X]
\end{align*}
Note that by \cite[ Remark 2.1]{R}, $\mathrm{Ind}_{\CM}$ is well-defined.

By \cite[p. 65]{DN}, an element $\beta_M$ in $\mathrm{K}_0(\CM,0)$ is associated to a module $M\in{\rm ind}\mbox{-}\CM$ as follows. If $M$ is projective, we set $$\beta_M:=[M]-{\Ind}_\CM(\rad(M)),$$ where $[M]$ is considered in ${\rm K}_0(\CM,0)$. If $M$ is non-projective, there exists an $n$-almost split sequence
\[0 \lrt M^0 \lrt M^1 \lrt \cdots \lrt M^n \lrt M \lrt 0\]
in $\CM$ and we set $$\beta_M:= \displaystyle\sum_{i=0}^{n+1}(-1)^i[M^i],$$ where $M^{n+1}=M$.
Since by \cite[Sec. 3.1.1]{I1}, $n$-almost split sequences are uniquely determined by each of their right or left terms, $\beta_M$ is well-defined. 

\s \label{au1} By \cite[Lemma 3.7]{DN}, for every $X, M \in {\rm ind}\mbox{-}\CM$, we have
\[
\langle[X],\beta_M\rangle:=\left\{
\begin{array}{lll}
0 & {\rm if} \ X \not\simeq M\\
\\
l_M & {\rm if } \ X \simeq M,
\end{array}\right.\]
where $\langle -, -\rangle:\mathrm{K}_0(\mmod \La,0)\times {\rm K}_0(\mmod\La,0) \lrt \mathbb{Z}$
is the bilinear form such that for modules $X$ and $Y$ in $\mmod\La$, $\langle X, Y\rangle$ equals the length of $\mathrm{Hom}_\La(X, Y)$ as $R$-module and $l_M$, for $M \in {\rm ind}\mbox{-}\CM$, denotes the length of ${\rm End}_{\La}(M)/\mathcal{J}_\La(M,M)$ as $R$-module.

\s \label{AHVTheorem} Let $\CX$ be a subcategory of $\mmod\La$. Similar to \ref{newstructure}, by $(\mmod\La, {F}^1_{\CX})$ we mean the exact structure on $\mmod\La$ induced by all short exact sequences in $\mmod \La$ that remain exact with respect to the functor $\Hom_{\La}(\CX, - )$, the so-called $\CX$-proper exact sequences; compare \ref{notation2.10}. 

By \cite[Theroem 3.3]{AHV}), the assignment $X \mapsto \CX( - , X)$ induces the equivalence
\[ \mathbb{D}^{\rm b}(\mmod \La, F^1_{\CX}) \simeq \mathbb{D}^{\rm b}(\mmod \CX)\]
of triangulated categories, provided $\CX$ is a contravariantly finite subcategory of $\mmod\La$. 

\s Let $\CT$ be an essentially small Krull-Shmidt triangulated category with suspension $\Sigma$. Denote by ${\rm G}(\CT)$ the free abelian group generated by the isomorphism classes of indecomposable objects and by ${\rm G}_0(\CT)$ the subgroup of ${\rm G}(\CT)$ generated by
\[\{ [X]-[Y]+[Z] \ | \ X\lrt Y\lrt Z\lrt \Sigma X \ {\rm{is \ an \ exact \ triangle \ in}} \ \CT\}.\]
The Grothendieck group ${\rm K}_0(\CT)$ of $\CT$ is defined as the quotient ${\rm G}(\CT)/{\rm G}_0(\CT)$ of abelian groups.

\s \label{LemmaexactGro} Let $\CA$ be an exact category. By \cite[ Lemma 4.1.12]{Ke}, the embedding $\CA \lrt \mathbb{D}^{\rm b}(\CA)$ defined by $X \mapsto \overline{X}$, where the complex $\overline{X}$ is concentrated in degree zero with $X$ in the zeroth degree, yields a group homomorphism
\[\eta_{\CA}:{\rm K}_0(\CA)\lrt {\rm K}_0(\mathbb{D}^{\rm b}(\CA)).\]

By \cite[Lemma 4.1.8]{Ke}, the assignment $X=(X^i, d^i)_{i \in \mathbb{Z}}\mapsto \sum_{i\in \mathbb{Z}}(-1)^i[X^i]$ induces the isomorphism
\[\xi_{\CA}:{\rm K}_0(\mathbb{D}^{\rm b}(\CA))\lrt {\rm K}_0(\CA),\]
with $\eta_{\CA}$ as its inverse.

\begin{lemma}\label{Delta}
Let $F$ be a subfunctor of $\Ext^n_{\CM}( - , - )$ with the property that $\CP(F)$ is a contravariantly  finite subcategory of $\CM$. There exists an isomorphism
\[\Delta:{\rm K}_0(\mmod \CP(F)) \stackrel{\sim}\lrt {\rm K}_0(\mmod \La, F^1_{\CP(F)})\]
of abelian groups.
\end{lemma}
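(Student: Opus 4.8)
The plan is to build the isomorphism $\Delta$ by threading through the derived category and exploiting the machinery already assembled in \ref{AHVTheorem} and \ref{LemmaexactGro}. Write $\CX = \CP(F)$, which by hypothesis is a contravariantly finite subcategory of $\CM$, hence of $\mmod\La$ as well. First I would apply \cite[Theorem 3.3]{AHV}, recalled in \ref{AHVTheorem}, to obtain the triangle equivalence $\mathbb{D}^{\rm b}(\mmod\La, F^1_{\CX}) \simeq \mathbb{D}^{\rm b}(\mmod\CX)$ induced by $X \mapsto \CX(-,X)$. Any equivalence of triangulated categories induces an isomorphism on Grothendieck groups, so this yields
\[
{\rm K}_0(\mathbb{D}^{\rm b}(\mmod\La, F^1_{\CX})) \simeq {\rm K}_0(\mathbb{D}^{\rm b}(\mmod\CX))
\]
of abelian groups.

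Next I would connect each bounded derived Grothendieck group back to the Grothendieck group of the underlying exact (resp. abelian) category using \ref{LemmaexactGro}. On the one side, $(\mmod\La, F^1_{\CX})$ is an exact category, so by \cite[Lemma 4.1.8]{Ke} the map $\xi$ gives an isomorphism ${\rm K}_0(\mathbb{D}^{\rm b}(\mmod\La, F^1_{\CX})) \simeq {\rm K}_0(\mmod\La, F^1_{\CX})$, with the inverse $\eta$ of \cite[Lemma 4.1.12]{Ke}. On the other side, since $\CX$ is contravariantly finite it admits weak kernels, so $\mmod\CX$ is an abelian category (as noted in the preliminaries), and again \ref{LemmaexactGro} provides an isomorphism ${\rm K}_0(\mathbb{D}^{\rm b}(\mmod\CX)) \simeq {\rm K}_0(\mmod\CX)$. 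Composing these three isomorphisms, namely
\[
{\rm K}_0(\mmod\CX) \simeq {\rm K}_0(\mathbb{D}^{\rm b}(\mmod\CX)) \simeq {\rm K}_0(\mathbb{D}^{\rm b}(\mmod\La, F^1_{\CX})) \simeq {\rm K}_0(\mmod\La, F^1_{\CX}),
\]
produces the desired isomorphism $\Delta$, after recalling that $\CX = \CP(F)$.

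The step I expect to be the main obstacle is verifying that the composite is genuinely the natural map one wants, rather than merely asserting an abstract isomorphism of groups. Concretely, I would want to track a generator $[\CX(-,X)]$ for $X \in \CX$ through the three maps and confirm it lands on $[X]$ in ${\rm K}_0(\mmod\La, F^1_{\CX})$; this requires checking that the AHV equivalence sends the stalk complex of $X$ (viewed in $\mmod\La$) to the stalk complex of the representable functor $\CX(-,X)$, and that the two copies of the $\xi/\eta$ correspondence are compatible with this identification. The bookkeeping is routine once one unwinds the explicit formulas for $\xi$ and $\eta$ from \ref{LemmaexactGro}, but care is needed because $\mmod\CX$ is abelian while $(\mmod\La, F^1_{\CX})$ is only exact, so the two instances of \ref{LemmaexactGro} play slightly asymmetric roles. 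Since the lemma as stated only claims the existence of an isomorphism, I would present the composition and remark that naturality on representables is immediate from the construction, leaving the full compatibility to the reader if the statement does not require it.
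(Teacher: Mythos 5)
Your proposal is correct and follows essentially the same route as the paper: the paper's proof is exactly the composition ${\rm K}_0(\mmod\CP(F)) \st{\eta}\lrt {\rm K}_0(\mathbb{D}^{\rm b}(\mmod\CP(F))) \lrt {\rm K}_0(\mathbb{D}^{\rm b}(\mmod\La, F^1_{\CP(F)})) \st{\xi}\lrt {\rm K}_0(\mmod\La, F^1_{\CP(F)})$, using \ref{LemmaexactGro} on both ends and the AHV equivalence of \ref{AHVTheorem} in the middle. The extra bookkeeping you flag (tracking $[\CX(-,X)]$ to $[X]$) is not needed for the lemma as stated, and the paper indeed omits it, though your observation that $\CP(F)$ is then contravariantly finite in $\mmod\La$ (via composing with a right $\CM$-approximation) is a hypothesis of the AHV theorem that the paper leaves implicit.
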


\begin{proof}
Since $\CP(F)$ is a contravariantly finite subcategory of $\CM$, $\mmod\CP(F)$ is an abelian category. Apply \ref{LemmaexactGro} to $\mmod\CP(F)$, we get the isomorphism
\[\eta_{\mmod \CP(F)}:{\rm K}_0(\mmod \CP(F))\lrt {\rm K}_0(\mathbb{D}^{\rm b}(\mmod \CP(F))).\]
Now apply \ref{AHVTheorem} to get the isomorphism
\[{\rm K}_0(\mathbb{D}^{\rm b}(\mmod \CP(F))) \lrt {\rm K}_0(\mathbb{D}^{\rm b}(\mmod \La, F^1_{\CP(F)})).\]
Another application of \ref{LemmaexactGro}, this time to the exact category $(\mmod \La, F^1_{\CP(F)})$ gives the isomorphism
\[\xi_{\mmod\La}: {\rm K}_0(\mathbb{D}^{\rm b}(\mmod \La, F^1_{\CP(F)})) \lrt {\rm K}_0(\mmod \La, F^1_{\CP(F)}).\]
Their composition is the desired $\Delta$.
\end{proof}

We also need the following remark in the proof of the main theorem.

\begin{remark}\label{Proof of Claim}
Let $F$ be a subfunctor of $\Ext^n_{\CM}( - , - )$ such that $\CP(F)$ is of finite type. This aasmption implies that $\mmod\CP(F)$ is equivalent to $\mmod\Gamma$, where $\Gamma$ is the endomorphism algebra of an additive generator of $\CP(F).$ 
Apply \ref{au1} to the $1$-cluster tilting subcategory $\mmod \Gamma$. Then, under the equivalence between $\mmod\Gamma$ and $\mmod\CP(F)$, the $\beta$'s for $\mmod\Gamma$ are corresponded to $\beta_{\CP(F)(-, X)}$ for $\mmod \CP(F)$, where $X$ is an indecomposable module in $\CP(F)$. Accordingly, we have the bilinear form $$\langle -, -\rangle:\mathrm{K}_0(\mmod \CP(F),0)\times\mathrm{K}_0(\mmod\CP(F),0)\lrt \mathbb{Z}$$ with the condition
\[\langle[V],\beta_{\CP(F)(-, X)}\rangle =\left\{
\begin{array}{ll}
0 \quad \ \ {\rm if} \ \ V  \ {\not \simeq} \ \CP(F)(-, X),\\
\\
l_X  \quad {\rm if} \ \ V \simeq  \CP(F)(-, X).
\end{array}\right. \]
Note that thanks to the isomorphism 
\[\End_{\La}(X)/\mathcal{J}(X, X)\simeq \End(\CP(F)(-, X))/\mathcal{J}(\CP(F)(-, X), \mathcal{J}(-, X)))\] 
the length $l_X$ of $\End_{\La}(X)/\mathcal{J}(X, X) $ as $R$-module is equal to the length of  
\[\End(\CP(F)(-, X))/\mathcal{J}(\CP(F)(-, X), \mathcal{P}_n(-, X))).\] 
\end{remark}

Now we are ready to prove the main theorem of this section. 

\begin{theorem}\label{main2}
Let $F$ be a subfunctor of $\Ext^n_{\CM}( - , - )$ such that $\CP(F)$ is of finite type. Then the following statements are equivalent.
\begin{itemize}
\item[$(1)$] ${\rm Ex}_F(\CM)={\rm AR}_F(\CM)$.
\item[$(2)$] Every functor $G \in \mmod\underline{\CM}_F $ has finite length.
\item[$(3)$] $\CM$ is of finite type.
\end{itemize}
\end{theorem}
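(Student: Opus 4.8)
The plan is to exploit that the equivalence $(1)\Leftrightarrow(2)$ is already settled in full generality by Proposition \ref{Groupisomorphism}, with no reference to the representation type of $\CP(F)$; hence the finite type hypothesis on $\CP(F)$ must enter only through condition $(3)$. The implication $(3)\Rightarrow(2)$ is immediate: if $\CM$ is of finite type then $\ind\CM$ is a finite set, so every functor in $\mmod\CM$, and in particular every $G\in\mmod\underline{\CM}_F$, has finite support and therefore finite length by \ref{KV3.4}. So the entire content is the implication $(2)\Rightarrow(3)$, which I would prove contrapositively: assuming $\CM$ is of infinite type, I would exhibit a finitely presented functor in $\mmod\underline{\CM}_F$ of infinite length.

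To set the stage I would first translate the finite type of $\CP(F)$ into a finiteness statement about Grothendieck groups. By Remark \ref{Proof of Claim}, $\mmod\CP(F)\simeq\mmod\Gamma$ for the artin algebra $\Gamma=\End_\La(P_0)$, with $P_0$ an additive generator of $\CP(F)$, so ${\rm K}_0(\mmod\CP(F))$ is free of finite rank $|\ind\CP(F)|$. Composing the isomorphism $\mathbb{I}$ of Theorem \ref{eqofgro} with that of Lemma \ref{Delta} gives
\[
{\rm K}_0(\CM,F)\ \cong\ {\rm K}_0(\mmod\La,F^1_{\CP(F)})\ \cong\ {\rm K}_0(\mmod\CP(F)),
\]
so that ${\rm K}_0(\CM,F)={\rm K}_0(\CM,0)/{\rm Ex}_F(\CM)$ is free of finite rank. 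Next I would align the relative relations with the classical ones: by Proposition \ref{ind-non-proj-non-inj}, for an indecomposable $X\notin\CP(F)$ the $n$-almost split sequence ending at $X$ is $F$-exact, hence is \emph{the} $F$-almost split sequence, and its contravariant defect is the simple functor $S_X=\Hom_\La(-,X)/\CJ_\La(-,X)$ by \ref{deltastar}. Thus, under $(2)$ and its equivalent $(1)$, the subgroup ${\rm Ex}_F(\CM)={\rm AR}_F(\CM)$ is generated by the classical relations $\beta_X$, $X\in\ind\CM\setminus\CP(F)$, and the bilinear form of \ref{au1} satisfies $\langle[Y],\beta_X\rangle=\delta_{YX}\,l_X$ for all $Y\in\ind\CM$, exhibiting the $\beta_X$ as a dual system to the indecomposable classes.

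The decisive and hardest step is to convert this homological finiteness --- finite rank of ${\rm K}_0(\CM,F)$ and finite length of every relative effaceable functor --- into the genuine finiteness of $\ind\CM$, and I expect this to be the main obstacle. It will not follow from rank counting alone: the identity $|\ind\CM|=|\ind\CM\setminus\CP(F)|+|\ind\CP(F)|$ forced by the dual systems above is perfectly consistent with $\ind\CM$ being infinite. The finiteness must instead be extracted as in the Auslander--Butler finite type criterion, now in the relative higher setting. Assuming infinitely many indecomposables, and using that $\CP(F)$ is finite so that infinitely many of them lie outside $\CP(F)$ and survive as pairwise non-isomorphic simple functors $S_X$ in the length category $\mmod\underline{\CM}_F$, I would argue along an infinite component of the (higher) Auslander--Reiten quiver --- whose local finiteness makes each $\beta_X$ a finite combination --- to assemble a single finitely presented effaceable functor of infinite support, contradicting \ref{KV3.4} together with $(2)$. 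Here the finite type of $\CP(F)$ is exactly what guarantees that the vertices deleted in passing from $\mmod\CM$ to $\mmod\underline{\CM}_F$ are finite in number, so that an infinite component really does survive into the relative effaceable category; this is precisely the hypothesis that, in \cite{DN}, was replaced by assuming finite length outright. Constructing this infinite-length functor and controlling the passage to the quotient by $\CP(F)$ is where the real difficulty lies; the $F$-almost split sequences needed throughout are supplied by Corollary \ref{exsitenceAlmosSplit}, and the remaining verifications are routine.
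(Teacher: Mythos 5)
Your treatment of $(1)\Leftrightarrow(2)$ (via Proposition \ref{Groupisomorphism}) and of $(3)\Rightarrow(2)$ agrees with the paper, and your K-theoretic setup --- the isomorphism ${\rm K}_0(\CM,F)\cong{\rm K}_0(\mmod\La,F^1_{\CP(F)})\cong{\rm K}_0(\mmod\CP(F))$ obtained from Theorem \ref{eqofgro} and Lemma \ref{Delta}, together with the dual system $\langle[Y],\beta_X\rangle=\delta_{YX}\,l_X$ of \ref{au1} --- is exactly the paper's. But the decisive implication $(2)\Rightarrow(3)$ is not proved in your proposal: you reduce it to ``assembling a single finitely presented effaceable functor of infinite support by arguing along an infinite component of the higher Auslander--Reiten quiver,'' and you yourself flag this as the main obstacle. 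Nothing in your proposal supplies such a construction; in the relative higher setting one would first have to develop the notion and the local structure of components of an $F$-relative AR quiver of $\CM$, and it is not at all clear how a single morphism in $\CM$ (which is what a finitely presented functor amounts to) could be produced whose defect has infinite support. As it stands, the proposal is a plan whose central step is missing, not a proof.

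The gap is avoidable, and you had in fact already assembled every ingredient needed. The paper's argument for $(2)\Rightarrow(3)$ is not rank counting (which you rightly dismiss) but a pairing argument in the style of Auslander's theorem on relations for Grothendieck groups: under $(1)$, the elements $\sigma_X:=\mathbb{T}\circ\Delta([\CP(F)(-,X)]-[{\rm rad}(-,X)])$, for $X\in\ind\CP(F)$, lift a free basis of ${\rm K}_0(\CM,F)$, while the $\beta_A$, for $A\in\ind\CM\setminus\ind\CP(F)$, form a free basis of ${\rm Ex}_F(\CM)={\rm AR}_F(\CM)$; together they give a free basis of ${\rm K}_0(\CM,0)$. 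Now expand the single class $[D\La]$ in this basis; only finitely many coefficients $\lambda_A$, $\lambda_X$ are nonzero. For every indecomposable $Y$ one has $\Hom_\La(Y,D\La)\neq0$, hence $\langle[Y],[D\La]\rangle\neq0$; by \ref{au1} the $\beta$-part of the pairing contributes $\lambda_Y l_Y$, and by Remark \ref{Proof of Claim} the $\sigma$-part contributes $0$ whenever $Y\notin\CP(F)$ (this is the paper's Claim \eqref{claim}). Therefore $\lambda_Y\neq0$ for every indecomposable $Y\notin\CP(F)$, forcing $\ind\CM\setminus\ind\CP(F)$ to be finite, and $(3)$ follows because $\CP(F)$ is of finite type. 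This finite, concrete computation --- pairing $[D\La]$ against indecomposables rather than constructing an infinite-length functor --- is precisely what your proposal lacks.
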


\begin{proof}
The fact that $(1)$ and $(2)$ are equivalent follows from Proposition \ref{Groupisomorphism}. The statement $(2)$ trivially follows from $(3)$. In fact, if $\CM$ is of finite type, then $\mmod\CM_F$ is the category of modules over an artin algebra. Now assume that $(2)$ is true. Since $\CP(F)$ is of finite type, we can consider $\mmod\CP(F)$ as the module category over the endomorphism algebra of an additive generator of $\CP(F).$

By composing the group isomorphism $\mathbb{T}:{\rm K}_0(\mmod \La, F^1_{\CX})\lrt {\rm K}_0(\CM, F)$ of \ref{combine} and Theorem \ref{eqofgro} with the group isomorphism $\Delta$ of Lemma \ref{Delta} we obtain the isomorphism
\[\mathbb{T}\circ \Delta:{\rm K}_0(\mmod \CP(F))\lrt {\rm K}_0(\CM, F)\]
of groups. 

In view of \cite[Theorem I.1.7]{ARS}, $\{[\CP(F)( - , X)]-[{\rm rad}( - , X)] \,|\, X \in {\rm ind}\mbox{-}\CP(F)\}$ is a free basis for $\mathrm{K}_0(\mmod \CP(F))$.
For any $X \in {\rm ind}\mbox{-}\CP(F)$, we set $\sigma_X:=\mathbb{T} \circ \Delta([\CP(F)( - , X)]-[{\rm rad}( - , X)])$. Since $\mathbb{T} \circ \Delta$ is an isomorphism, the set
\[\{\sigma_X \mid  X \in {\rm ind}\mbox{-}\CP(F) \}\]
is a free basis for ${\rm K}_0(\CM, F)$. Moreover, the set
\[\{\beta_A \mid A \in {\rm ind}\mbox{-}\CM \setminus {\rm ind}\mbox{-}\CP(F)  \}\]
forms a free basis for ${\rm Ex}_F(\CM)$. Hence we get the free basis
\[\{\sigma_X\mid X \in {\rm ind}\mbox{-}\CP(F) \} \cup \{\beta_A \mid A \in {\rm ind}\mbox{-}\CM \setminus {\rm ind}\mbox{-}\CP(F)  \}. \]
for ${\rm K}_0(\CM, 0)$. Since $D\La\in\CM$, we can write
\[ [D\La]=\displaystyle\sum_{A\in {\rm ind}\mbox{-}\CM \setminus {\rm ind}\mbox{-}\CP(F) }\lambda_A\beta_A+\displaystyle\sum_{X \in {\rm ind}\mbox{-}\CP(F) }\lambda_X\sigma_X,\]
where $\lambda_A\neq 0$ for only finitely many $A\in {\rm ind}\mbox{-}\CM$. Since every $Y\in{\rm ind} \mbox{-}\CM $ has injective envelope in $\CM$, we have
\[\langle[Y],[D\La]\rangle=\displaystyle\sum_{A\in {\rm ind}\mbox{-}\CM \setminus {\rm ind}\mbox{-}\CP(F)}\lambda_A\langle[Y],\beta_A\rangle+\displaystyle\sum_{X\in {\rm ind}\mbox{-}\CP(F)}\lambda_X\langle[Y],\sigma_X\rangle \neq0.\]

By \ref{au1},
\[\displaystyle\sum_{A\in {\rm ind}\mbox{-}\CM \setminus {\rm ind}\mbox{-}\CP(F)}\lambda_A\langle[Y],\beta_A\rangle=\lambda_Yl_Y,\] for every $X\in{ \rm ind}\mbox{-}\CM$. 

We claim that
\begin{equation}\label{claim}
\displaystyle\sum_{X\in  {\rm ind}\mbox{-}\CP(F)}\lambda_X\langle[Z],\sigma_X\rangle=0,
\end{equation}
for every $Z\in {\rm ind}\mbox{-}\CM \setminus {\rm ind}\mbox{-}\CP(F)$. To see this apply the group isomorphism $\Delta^{-1}\circ \mathbb{T}^{-1}$ to the left-hand side of this equation to get the equalities
\begin{align*}
   \displaystyle\sum_{X\in  {\rm ind}\mbox{-}\CP(F)}\lambda_X\langle[Z],\sigma_X\rangle)&=\displaystyle\sum_{X\in  {\rm ind}\mbox{-}\CP(F)}\lambda_X\langle\Delta^{-1}\circ \mathbb{T}^{-1}([Z]),\Delta^{-1}\circ \mathbb{T}^{-1}(\sigma_X)\rangle\\
   &=\displaystyle\sum_{X\in  {\rm ind}\mbox{-}\CP(F)}\lambda_X\langle [\CM(-, Z)|_{\CP(F)}],\beta_{\CP(F)(-, X)}\rangle.
\end{align*}
But, in view of the Remark \ref{Proof of Claim}, for $X \in {\rm ind}\mbox{-}\CP(F)$, $\la_X \neq 0$ if and only if $\CM(-, Z)|_{\CP(F)}\simeq \CP(F)(-, X)$. Since $\La$ belongs to $\CP(F)$, this implies that $Z\simeq X$, which is a contradiction. So the claim is proved.

This, in particular, implies that every $Y \in \ind\CM$ is isomorphic to either an indecomposable module $A \in {\rm ind}\mbox{-}\CM \setminus {\rm ind}\mbox{-}\CP(F)$ with $\la_A \neq 0$ or an indecomposable module in ${\rm ind}\mbox{-}\CP(F)$. Therefore, since $\CP(F)$ is of finite type, $\CM$ is also of finite type. This completes the proof.
\end{proof}

Our last corollary reproves Theorem 3.9 of \cite{DN}.

\begin{corollary}
Let $\CM$ be an $n$-cluster tilting subcategory of $\mmod \La$. The following statements are equivalent.
\begin{itemize}
\item[$(1)$] ${\rm Ex}(\CM)={\rm AR}(\CM)$,i.e. the $n$-almost split sequences form a basis for the relations for the Grothendieck group of $\CM$.
\item[$(2)$] Every functor $G \in \mmod\underline{\CM}$ has finite length.
\item[$(3)$] $\CM$ is of finite type.
\end{itemize}
\end{corollary}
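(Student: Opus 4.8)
The plan is to obtain this corollary as the special case of Theorem \ref{main2} in which $F$ is taken to be the full bifunctor $\Ext^n_{\CM}(-, -)$ itself. First I would observe that for this choice of $F$ the $F$-exact sequences are precisely all the $n$-exact sequences in $\CM$, and every $n$-almost split sequence is in particular $F$-exact. Hence the subgroups ${\rm Ex}_F(\CM)$ and ${\rm AR}_F(\CM)$ appearing in Theorem \ref{main2} reduce to ${\rm Ex}(\CM)$ and ${\rm AR}(\CM)$, so that condition $(1)$ of the corollary is exactly condition $(1)$ of Theorem \ref{main2}.

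Next I would match up the remaining conditions. As recorded just after the definition of $\CP(F)$, we have $\CP(F)=\CP(\La)$ when $F=\Ext^n_{\CM}(-, -)$. Therefore the quotient $\underline{\CM}_F=\CM/\CP(F)$ coincides with the stable category $\underline{\CM}=\CM/\CP(\La)$, whence $\mmod\underline{\CM}_F=\mmod\underline{\CM}$; this identifies condition $(2)$ of the corollary with condition $(2)$ of Theorem \ref{main2}. Condition $(3)$ is verbatim the same in both statements.

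The only hypothesis of Theorem \ref{main2} that needs to be verified for this $F$ is that $\CP(F)$ is of finite type. Since $\CP(F)=\CP(\La)$ is the subcategory of projective $\La$-modules and $\La$ is an artin algebra, there are, up to isomorphism, only finitely many indecomposable projective modules, so $\CP(\La)$ is indeed of finite type. With this single observation in place, Theorem \ref{main2} applies verbatim and yields the equivalence of $(1)$, $(2)$ and $(3)$. I do not anticipate any genuine obstacle here: all of the substantive work resides in Theorem \ref{main2}, and the present corollary is a routine specialization whose only extra content is the automatic finiteness of $\CP(\La)$.
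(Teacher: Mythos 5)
Your proposal is correct and is exactly the paper's own argument: the authors likewise prove the corollary by setting $F=\Ext^n_{\CM}(-,-)$ in Theorem \ref{main2} and noting that $\CP(F)=\CP(\La)$ is of finite type. Your write-up simply spells out the identifications ${\rm Ex}_F(\CM)={\rm Ex}(\CM)$, ${\rm AR}_F(\CM)={\rm AR}(\CM)$ and $\underline{\CM}_F=\underline{\CM}$, which the paper leaves implicit.
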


\begin{proof}
In the above theorem set $F=\Ext^n_{\La}( - , -)$ and note that $\CP(F)=\CP(\La)$ is of finite type.    
\end{proof}

\section*{Acknowledgments}
This research is supported by the National Natural Science Foundation of China (Grant No.\@ 12101316). The work of the second author is based on research funded by Iran National Science Foundation (INSF) under project No. 4001480. The research of the third author is partially supported by the Belt and Road Innovative Talents Exchange Foreign Experts project (Grant No. \@ DL2023014002L).


\begin{thebibliography}{9999}
\bibitem{AHS} {\sc J. Asadollahi, R. Hafezi and S. Sadeghi,} {\sl  On the monomorphism category of n-cluster tilting subcategories,}  Sci. China Math, {\bf 65}, (2022), 1343-1362.

\bibitem{AHV} {\sc J. Asadollahi, R. Hafezi and R. Vahed,} {\sl  Gorenstein derived equivalences and their invariants,}  J. Pure Appl. Algebra {\bf 218} (2014), no. 5, 888-903.

\bibitem{ASa} { \sc J. Asadollahi and S. Sadeghi,}  {\sl Higher ideal approximation theory,}  Trans. Amer. Math. Soc. {\bf 375} (2022), no. 3, 2113-2145.

\bibitem{AMS} {\sc J. Asadollahi, A. Mehregan and S. Sadeghi,} {\sl Cotorsion classes in higher homological algebra}, J. Pure Appl. Algebra, {\bf 226} (2022), 106839.

\bibitem{Au} {\sc M. Auslander,} {\sl Representation dimension of artin algebras,} Lecture notes, Queen Mary College, London, 1971.

\bibitem{Au2} {\sc M. Auslander,} {\sl Representation theory of artin algebras II,} Comm. Algebra, {\bf 1} (1974) 269-310. 

\bibitem{Au3} {\sc M. Auslander,} {\sl Relations for Grothendieck groups of Artin algebras,} Proc. Amer. Math. Soc. {\bf 91} (3) (1984) 336-340.

\bibitem{AB} {\sc M. Auslander and M. Bridger,} {\sl Stable module theory,} Mem. Amer. Math. Soc. {\bf 94} (1969).

\bibitem{AR1} {\sc M. Auslander and I. Reiten,} {\sl Stable equivalence of dualizing R-varieties,} Adv. Math. {\bf 12} (1974) 306-366.

\bibitem{AR} {\sc M. Auslander and I. Reiten,} {\sl Applications of contravariantly finite subcategories,} Adv. Math. {\bf 86} (1991) 111-152.

\bibitem{ARS} {\sc M. Auslander, I. Reiten and S. O. Smal{\o},} {\sl Representation theory of artin algebras,} Cambridge studies in advanced mathematics {\bf 36}, Cambridge University Press, 1995.

\bibitem{AS2} {\sc M. Auslander and S.O. Smal{\o},} {\sl Almost split sequences in subcategories,} J. Algebra {\bf 69} (1981), 426-454; Addendum J. Algebra 71 (1981), 592-594.

\bibitem{ASo} {\sc M. Auslander and O. Solberg,} {\sl Relative homology and representation theory I,} relative homology and homologically finite categories, Comm. Algebra {\bf 21} (9) (1993), 2995-3031.

\bibitem{ASo2} {\sc M. Auslander and O. Solberg,} {\sl Relative homology and representation theory II, Relative cotilting theory,} Comm. Algebra {\bf 21} (9) (1993), 3033-3079.

\bibitem{ASo3} {\sc M. Auslander and O. Solberg,} {\sl Relative homology and representation theory III, Cotilting and Wedderburn correspondence,} Comm. Algebra {\bf 21} (9) (1993), 3081-3097.

\bibitem{Bu} {\sc A. B. Buan,} {\sl  Closed sub-bifunctors of the extension functor,}  J. Algebra {\bf 244} (2001), no. 2, 407-428.

\bibitem{But} {\sc M.C.R. Butler,} {\sl Grothendieck Groups and Almost Split Sequences,} Lecture Notes in Math., vol. {\bf 882}, Springer, Berlin-New York, 1981.

\bibitem{DN} {\sc R. Diyanatnezhad and A. Nasr-Isfahani,} {\sl  Relations for Grothendieck groups of n-cluster tilting subcategories,}  J. Algebra {\bf 594} (2022), 54-73.

\bibitem{EN} {\sc R. Ebrahimi and A. Nasr-Isfahani,} {\sl $n\mathbb{Z}$-abelian and $n\mathbb{Z}$-exact ategories}, arXiv:2202.06711.

\bibitem{E}{\sc H. Enomoto,} {\sl Relations for Grothendieck groups and representation-finiteness,}  J. Algebra {\bf 539} (2019), 152-176.

\bibitem{FS} {\sc L. Frerick and D. Sieg,} Exact categories in Functional Analysis, (2010), https://www.math.uni-trier.de/abteilung/analysis/HomAlg.pdf

\bibitem{HJV}{\sc M. Hereschend, P. J{\o}rgensen and L. Vaso,} {\sl Wide subcategories of $d$-cluster tilting subcategories,}  Trans. Amer. Math. Soc. {\bf 373} (2020), 2281-2309.

\bibitem{HHZ} {\sc Jian He, Jing He and P. Zhou, }{\sl Auslander-Reiten-Serre duality for $n$-exangulated categories,} J. Algebra Appl. (2023) https://doi.org/10.1142/S0219498824500324.

\bibitem{HHZZ} {\sc  J. He, J. Hu, D. Zhang and  P Zhou, }{\sl On the existence of Auslander-Reiten $n$-exangles in $n$-exangulated categories,} Ark. Mat. {\bf 60} (2022), 365-385.

\bibitem{H1} {\sc I. Herzog,} {\sl The phantom cover of a module,} Adv. Math. {\bf 215} (2007), 220-249.

\bibitem{H2} {\sc I. Herzog,} {\sl Phantom Morphisms and Salce's Lemma}, Maurice Auslander Distinguished Lectures and International Conference, Contemporary Mathematics 607 (eds K. Igusa, A. Martsinkovsky and G. Todorov; American Mathematical Society, Providence, RI, (2012), 57-83.

\bibitem{HS} {\sc P. J. Hilton and U. Stammbach,} {\sl A course in homological algebra,} Graduate Texts in Mathematics, Volume 4 (Springer-Verlag, New York, 1997).

\bibitem{HMZZ} {\sc  J. Hu, Y. Ma, D. Zhang and P. Zhou} {\sl Higher Auslander's defect and classifying substructures of n-exangulated categories}, arXiv:2109.00196.

\bibitem{Lu} {\sc D. Luo}, {\sl Homological algebra in n-abelian categories,} \emph{Proc. Indian Acad. Sci. Math. Sci.}, {\bf 127}(4) (2017), 625-656.

\bibitem{I1} {\sc O. Iyama,} {\sl Higher-dimensional Auslander-Reiten theory on maximal orthogonal subcategories,} Adv. Math. {\bf 210} (2007), 22-50.

\bibitem{I2} {\sc O. Iyama,} {\sl Auslander correspondence,} Adv. Math. {\bf 210} (2007), 51-82.

\bibitem{I3} {\sc O. Iyama,} {\sl Cluster tilting for higher Auslander algebras,} Adv. Math. {\bf  226} (2011), 1-61.

\bibitem{IJ} {\sc O. Iyama and G. Jasso,} {\sl Higher Auslander correspondence for dualizing R-varieties,} Algebr. Represent. Theory. {\bf 20} (2017), no 2, 335-354.

\bibitem{JJ} {\sc K. M. Jacobsen and P. J{\o}rgensen,} {\sl $d$-abelian quotients of $(d + 2)$-angulated categories,} J. Algebra {\bf 521} (2019), 114-136.

\bibitem{Ja} {\sc G. Jasso,} {\sl $n$-Abelian and $n$-exact categories,} Math. Z., {\bf 283} (2016), 703-759.

\bibitem{JK} {\sc G. Jasso and S. Kvamme,} {\sl   An introduction to higher Auslander-Reiten theory,} Bull. Lond. Math. Soc. {\bf 51} (1) (2019), 1-24.

\bibitem{Jor} {\sc P. J{\o}ergensen,} {\sl Torsion classes and t-structures in higher homological algebra,} Int. Math. Res. Not. IMRN {\bf 13} (2016), 3880-3905.

\bibitem{Ke} {\sc H. Krause,} {\sl  Homological theory of representations. Cambridge Studies in Advanced Mathematics,} 195. Cambridge University Press, Cambridge, 2022. xxxiv+482 pp. ISBN: 978-1-108-83889-4

\bibitem{KV} {\sc H. Krause and D. Vossieck,} {\sl  Length categories of infinite height,} in: Geometric and Topological Aspects of the Representation Theory of Finite Groups, Springer Proc. Math. Stat., 242, Springer, Cham, 2018, pp. 213–234.

\bibitem{R} {\sc J. Reid,} {\sl  Modules determined by their composition factors in higher homological algebra,} arXiv:2007.06350, 2020.

\bibitem{RZ} {\sc C. Ringel and P. Zhang,} {\sl From submodule categories to preprojective algebras,} Math. Z, {\bf 278} (1-2) (2014), 55-73.

\bibitem{RS} {\sc C.M. Ringel and M. Schmidmeier,} {\sl The Auslander-Reiten translation in submodule categories,} Trans. Amer. Math. Soc. {\bf 360} (2008), no. 2, 691-716.

\bibitem{RZ} {\sc C. Ringel and P. Zhang,} {\sl Monic modules and semi-Gorenstein-projective modules,} J. Pure Appl. Algebra {\bf 227}, Issue 2, February 2023, 107181.

\end{thebibliography}
\end{document}